\documentclass[11pt]{amsart}

\usepackage{amssymb, amsfonts, amsthm, amsmath, mathrsfs, todonotes, tikz-cd, easytable,lineno}
\usepackage[linktocpage=true,hidelinks]{hyperref}
\usepackage{caption}
\usepackage{graphicx}
\usepackage{subcaption}
\usepackage[capitalise]{cleveref}
\usepackage{comment}
\usepackage[T1]{fontenc}
\setuptodonotes{inline}

\usepackage{trimclip}
\makeatletter
\DeclareRobustCommand{\subto}{%
  \mathrel{\mathpalette\short@to\relax}%
}

\newcommand{\short@to}[2]{%
  \mkern2mu
  \clipbox{{.35\width} 0 0 0}{$\m@th#1\vphantom{+}{\rightarrow}$}%
  }
\makeatother

\usepackage{mathtools}
\usepackage{extarrows}
\usepackage[numbers,sort&compress]{natbib}

\newtheorem*{rep@theorem}{\rep@title}
\newcommand{\newreptheorem}[2]{%
\newenvironment{rep#1}[1]{%
 \def\rep@title{#2 \ref{##1}}%
 \begin{rep@theorem}}%
 {\end{rep@theorem}}}
\makeatother

\newtheorem*{rep@corollary}{\rep@title}
\newcommand{\newrepcorollary}[2]{%
\newenvironment{rep#1}[1]{%
 \def\rep@title{#2 \ref{##1}}%
 \begin{rep@corollary}}%
 {\end{rep@corollary}}}
\makeatother

\numberwithin{equation}{section}
\newtheorem{theorem}{Theorem}[section]
\newreptheorem{theorem}{Theorem}

\newtheorem{lemma}[theorem]{Lemma}
\newtheorem{proposition}[theorem]{Proposition}
\newtheorem{corollary}[theorem]{Corollary}
\newreptheorem{corollary}{Corollary}
\newtheorem{conjecture}[theorem]{Conjecture}

\theoremstyle{definition}
\newtheorem{definition}[theorem]{Definition}
\newtheorem{remark}[theorem]{Remark}
\newtheorem{example}[theorem]{Example}
\newtheorem{notation}[theorem]{Notation}
\newreptheorem{notation}{Notation}
%\newtheorem{introtheo}[theorem]{Theorem}
%\newtheorem{introcor}[theorem]{Corollary}
%\newtheorem{introconjecture}[theorem]{Conjecture}

% Shortcuts and Macros
\newcommand{\R}{\mathbb R}

\newcommand{\Z}{\mathbb Z}

\newcommand{\N}{\mathbb N}
\newcommand{\cat}[1]{#1}

\newcommand{\rips}{\mathcal{R}}
\newcommand{\srips}{\mathcal{SR}}
\newcommand{\scech}{\mathcal{S}\check{\mathcal{C}}}

\newcommand{\rhomb}{\mathcal{T}}

\newcommand{\cech}{\check{\mathcal{C}}}

\DeclareMathOperator{\op}{op}

\DeclareMathOperator{\Id}{Id}
\DeclareMathOperator{\im}{im}
\newcommand{\gr}[1]{\text{gr}(#1)}
\newcommand{\birth}{\mathbf b}

\newcommand{\Bary}[1]{{#1}^{+}}
\newcommand{\Img}{\mathrm{im}\,}

\newcommand{\Poset}{{\mathbb N}^{\mathrm{op}}\times [0, \infty)}

\newcommand{\Simp}{\mathbf{Simp}}
\newcommand{\Top}{\mathbf{Top}}

\renewcommand{\Vec}{\mathbf{Vec}}

\newcommand{\eps}{\ensuremath{\epsilon}}

\newcommand{\Ch}[1]{C_{#1}\mkern 2mu}
\newcommand{\Ho}[1]{H_{#1}\mkern 2mu}

\DeclareMathOperator{\Ner}{Nrv}

\DeclareMathOperator{\Nullity}{Nullity}
\DeclareMathOperator{\pack}{pack}

\newcommand{\T}{[0,\infty)}

\newcommand{\subdiv}{\mathcal{SF}}
\newcommand{\cCov}{\mathbf{Cov}}
\newcommand{\icech}{\mathcal{I}}

\DeclareMathOperator{\colim}{colim}

% TikZ
\tikzset{%
  symbol/.style={
    draw=none,
    every to/.append style={
      edge node={node [sloped, allow upside down, auto=false]{$#1$}}
    },
  },
}

\title{Nerve Models of Subdivision Bifiltrations}
\author{Michael Lesnick and Ken McCabe}
\date{\today}

\begin{document}

\maketitle

\begin{abstract}
We study the size of Sheehy's subdivision bifiltrations, up to homotopy. We focus in particular on the subdivision-Rips bifiltration $\srips(X)$ of a metric space $X$,  the only density-sensitive bifiltration on metric spaces known to satisfy a strong robustness property.  Given a simplicial filtration $\mathcal F$ with a total of $m$ maximal simplices across all indices, we introduce a nerve-based simplicial model for its subdivision bifiltration $\subdiv$ whose $k$-skeleton has size $O(m^{k+1})$.  We also show that the $0$-skeleton of any simplicial model of $\subdiv$ has size at least $m$.   We give several applications:  For an arbitrary metric space $X$, we introduce a $\sqrt{2}$-approximation to $\srips(X)$, denoted $\mathcal J(X)$, whose $k$-skeleton has size $O(|X|^{k+2})$.  This improves on the previous best approximation bound of $\sqrt{3}$, achieved by the degree-Rips bifiltration, which implies that $\mathcal J(X)$ is more robust than degree-Rips.  Moreover, we show that the approximation factor of $\sqrt{2}$ is tight; in particular, there exists no exact model of $\srips(X)$ with poly-size skeleta.  On the other hand, we show that for $X$ in a fixed-dimensional Euclidean space with the $\ell_p$-metric, there exists an exact model of $\srips(X)$ with poly-size skeleta for $p\in \{1, \infty\}$, as well as a $(1+\eps)$-approximation to $\srips(X)$ with poly-size skeleta for any $p \in (1, \infty)$ and fixed ${\eps > 0}$.
\end{abstract}
\tableofcontents

\section{Introduction}
\label{sec:Introduction}
\subsection{Background}

Topological data analysis (TDA) \cite{chazalIntroductionTopological2021, henselSurveyTopological2021, wassermanTopologicalData2018,carlsson2021topological} studies the shape of a data set by constructing a diagram of topological spaces and computing invariants of this diagram.  In particular, persistent homology \cite{edelsbrunnerPersistentHomology2008, punPersistenthomologybasedMachine2022,ghristBarcodesPersistent2008}, the mostly widely studied and applied TDA technique, begins by constructing a 1-parameter filtration of spaces, e.g., the \emph{\v{C}ech filtration} of a Euclidean point cloud or the \emph{(Vietoris-)Rips filtration} of a finite metric space \cite{edelsbrunnerComputationalTopology2022}.  Applying the homology functor with field coefficients to a filtration yields a diagram of vector spaces called a persistence module, whose isomorphism type is fully described by a multiset of intervals called a \emph{barcode}.

The barcodes of \v Cech and Rips filtrations are well-known to be stable with respect to small perturbations of the data \cite{cohen-steinerStability07,chazal2009gromov,chazal2014persistence}.  However, these barcodes can be highly unstable with respect to outliers, and also insensitive to variations in density \cite[Section ~$4$]{blumbergRobust14}.  Several strategies have been proposed to address these limitations within the framework of 1-parameter persistence; see \cite[Section 1.7]{blumbergStability2Parameter2022}.  However, as noted in \cite{blumbergStability2Parameter2022}, these strategies share a common disadvantage, which is that they depend on a choice of one or more parameters.  It can be unclear how to best choose these parameters, and in some settings, it can be that no single choice allows us to fully capture the topological structure of interest in our data.  This motivates us to analyze the topology of point cloud or metric data by constructing a \emph{bifiltration}, treating density and spatial scale as separate parameters \cite{carlssonTheoryMultidimensional2009}.  

The branch of TDA that studies data via the homology of multiparameter filtrations is known as \emph{multiparameter persistent homology (MPH)} \cite{carlssonTheoryMultidimensional2009, botnanIntroductionMultiparameter2023}.  Though there are difficulties with extending the notion of a barcode to the multiparameter setting, several simple, albeit incomplete, invariants of multiparameter persistence modules have been proposed and studied.  These can be used as surrogates for a barcode in applications; see \cite[Sections 4 and 9]{botnanIntroductionMultiparameter2023} for an overview.

The problem of working with 2-parameter persistent homology in a practical, computationally efficient way has been actively studied in recent years, leading to significant advances in algorithms and software \cite{fugacci2023compression,lesnickComputingMinimal2022a,lesnickInteractiveVisualization2015,bauer2023efficient,alonso2023filtration,alonsoDelaunayBifiltrations2023,carriere2020multiparameter,vipond2018multiparameter,xin2023gril,loiseaux2024stable,rolle2020stable}, as well as several interesting applications \cite{vipond2021multiparameter,demir2022todd,benjamin2022multiscale,chen2021tamp,adcock2014classification,xia2015multidimensional,loiseaux2024stable}.  We expect such applications will become more common as multiparameter persistence software continues to improve and is more fully integrated into standard ecosystems for TDA and machine learning.  However, the applicability of some of the prevailing 2-parameter approaches is limited by relatively high computational cost, and much current research in this area centers on limiting this cost.

\subsection{Density-Sensitive Bifiltrations}
 There are multiple ways of constructing a density-sensitive bifiltration from point cloud or metric data, which vary in their robustness to outliers, computability, and generality \cite{blumbergRobust14}.  We now review several of these constructions, focusing on those that do not depend on an additional parameter choice; see \cite{carlssonTheoryMultidimensional2009, chazalGeometricInference2011, alonsoDelaunayBifiltrations2023, blaser2024core} for other constructions.
 
We denote the Rips filtration of a finite metric space $X$ as $\rips(X)$.  For $X$ equipped with an embedding into an ambient metric space $Y$ (e.g., $Y=\R^n$), we denote the \v Cech filtration of $X$ as $\cech(X)$, suppressing the dependence on $Y$ in the notation; see \cref{Sec:Rips_Cech} for the definitions.
Sheehy \cite{sheehyMulticoverNerve2012a} introduced the  \emph{subdivision-Rips} and \emph{subdivision-\v{C}ech bifiltrations} $\srips(X)$ and $\scech(X)$, which refine $\mathcal R(X)$ and $\cech(X)$, respectively.   In fact, Sheehy's construction provides a bifiltered refinement $\mathcal S \mathcal F$ of any simplicial filtration $\mathcal F$.   The definition of $\mathcal{SF}$ amounts to the observation that there is a natural filtration on the barycentric subdivision of any simplicial complex.  Unfortunately, the sizes of both $\srips(X)$ and $\scech(X)$ are exponential in $|X|$, so in practice we cannot directly compute them.  
 
Sheehy, Cavanna, and Gardners' \emph{multicover nerve theorem} \cite{cavanna17when,sheehyMulticoverNerve2012a,blumbergStability2Parameter2022}, a 2-parameter extension of the standard topological equivalence between the \v Cech and union-of-balls filtrations, motivates and illuminates the subdivision-\v Cech bifiltration.  The theorem says that if finite intersections of balls are contractible in the ambient metric space $Y$, then $\scech(X)$ is homotopically equivalent (i.e., \emph{weakly equivalent}, see \Cref{sec:Filtrations}) to the \emph{multicover bifiltration} $\mathcal M(X)$.  The bifiltration $\mathcal M(X)$ is an extension of the union-of-balls filtration, where one filters both by radius and by \emph{cover multiplicity}, i.e., the minimum number of balls covering a point.

In the case that $X\subset \R^d$ is in general position, Corbet et al. \cite{corbetComputingMulticover2023} showed that the \emph{rhomboid bifiltration} $\rhomb(X)$, a polyhedral bifiltration of size $\Theta(|X|^{d+1})$ introduced by Edelsbrunner and Osang \cite{edelsbrunnerMultiCoverPersistence2021}, is homotopically equivalent to $\mathcal M(X)$, and hence also to $\scech(X)$.  This improves on earlier results from \cite{edelsbrunnerMultiCoverPersistence2021} concerning 1-parameter slices of $\mathcal M(X)$.  Furthermore, $\rhomb(X)$ can be computed in polynomial time \cite{edelsbrunnerSimpleAlgorithm2020}, and an implementation is available for the cases $d=2,3$ \cite{osangRhomboidTiling2020}.  Even so, the size of $\rhomb(X)$ makes its full computation impractical for $d=3$.  
Recent work by Buchet et al.\ \cite{buchetSparseHigher2023} gives a linear-size approximation of the restriction of $\mathcal M(X)$ to cover multiplicities at most some constant $\mu$.    However, the size of this construction depends exponentially on $\mu$.

Lesnick and Wright \cite{lesnickInteractiveVisualization2015} introduced another simple density-sensitive bifiltration on a finite metric space $X$, the \emph{degree-Rips bifiltration} $\mathcal{DR}(X)$, whose $k$-skeleton has size at most $O(|X|^{k+2})$.  Restricting $\mathcal{DR}(X)$ to a grid of constant size yields a bifiltration whose skeleta have size $O(|X|^{k+1})$, which asymptotically matches the size of the 1-parameter Rips filtration.  This is small enough that $H_0(\mathcal{DR}(X))$ and invariants thereof can readily be computed for real data sets with thousands of points.  The software packages RIVET \cite{rivet} and Persistable \cite{Scoccola2023} support such computations.

Furthermore, Blumberg and Lesnick \cite{blumbergStability2Parameter2022} showed that $\scech(-)$ and $\srips(-)$ both satisfy strong robustness properties, closely paralleling the standard stability theorems of single-parameter persistence.  
 In the case of $\srips(-)$, the result is the following:
\begin{theorem}[{\cite[Theorem 1.6\,(iii)]{blumbergStability2Parameter2022}}] \label{Thm:Robustness}
For any finite, non-empty metric spaces $X$ and $X'$, the homotopy interleaving distance between $\srips(X)$ and $\srips(X')$ is at most the Gromov-Prohorov distance between the uniform probability measures on $X$ and $X'$. 
\end{theorem}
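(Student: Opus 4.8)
The plan is to deduce the bound from the definition of the homotopy interleaving distance as an infimum over approximate interleavings, by turning a near-optimal coupling of the uniform measures into a pair of maps between $\srips(X)$ and $\srips(X')$ whose round-trip composites, while not literally the structure maps, are joined to them by explicit homotopies inside the filtered barycentric subdivisions. First I would use the coupling characterization of the Gromov--Prohorov distance: if the distance between $(X,\mu_X)$ and $(X',\mu_{X'})$ is less than $\eps$, there is a metric space $Z$ containing isometric copies of $X$ and $X'$ together with a coupling $\pi$ of the pushed-forward measures with $\pi(\{(a,b)\in Z\times Z : d_Z(a,b)\ge \eps\}) < \eps$. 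It then suffices to produce from such data an $\eps$-homotopy-interleaving of $\srips(X)$ and $\srips(X')$ and let $\eps$ decrease to the Gromov--Prohorov distance.

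Second I would package the coupling as an auxiliary finite metric \emph{measure} space $W$; Sheehy's construction makes sense for any finite measure, and this generality is both needed and harmless. Let $W\subseteq X\times X'$ be the set of pairs $(x,x')$ with $\pi(\{(x,x')\})>0$ and $d_Z(x,x')<\eps$, given the metric induced coordinatewise from $Z$ (say $d_W((x_1,x_1'),(x_2,x_2'))=\max\{d_X(x_1,x_2),d_{X'}(x_1',x_2')\}$) and the measure $\pi$ restricted to $W$. The coordinate projections $p\colon W\to X$ and $q\colon W\to X'$ are $1$-Lipschitz; their pushforwards are dominated by $\mu_X$ and $\mu_{X'}$ with total mass exceeding $1-\eps$, since we discarded a set of $\pi$-mass below $\eps$; and every fiber of $p$ or $q$ has diameter below $2\eps$, because two points of one fiber lie within $\eps$ of a common point of $Z$. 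Choosing set-theoretic sections of $p$ and $q$ (extended arbitrarily over the measure-$<\eps$ parts of $X$ and $X'$ that a projection may miss) and composing then yields maps $\srips(X)\to\srips(X')$ and $\srips(X')\to\srips(X)$; tracking diameters and measures of simplices shows these are defined with a shift of $\eps$ in the density coordinate (from the mass deficit) and of $\eps$ in the scale coordinate (from the distortion of the sections, in the convention for $\rips$ in which this works out to $\eps$ rather than $2\eps$), with round-trip composites equal to the $2\eps$-shift maps.

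Third --- the crux --- I would exhibit, at each index, a homotopy between each round-trip composite and the corresponding structure map of $\srips(X)$ (respectively $\srips(X')$), natural enough across indices to constitute an $\eps$-homotopy-interleaving; equivalently, I would prove that for a surjection $g\colon(A,\nu_A)\to(B,\nu_B)$ of finite metric measure spaces that is $1$-Lipschitz, measure-compatible up to $\delta$, and has fibers of diameter below $\delta$, the map $\srips(A)\to\srips(B)$ induced by $g$, shifted by $\delta$ in both coordinates, is a weak equivalence. For this I would take a set-section $s$ of $g$ --- a $\delta$-distortion map by the fiber bound --- note $gs=\mathrm{id}$, and join $sg$ to the identity by a straight-line simplicial homotopy in the filtered barycentric subdivision: a vertex of the subdivision is a simplex $\tau$ of $\rips$, and it lies together with its image $sg(\tau)$ in a single simplex of a slightly enlarged Rips complex, and one checks that this homotopy remains inside the prescribed window in the density parameter as well. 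Collating these homotopies over all indices (and verifying the mild coherence the definition of $d_{\mathrm{HI}}$ requires) yields the desired weak equivalences, hence the interleaving.

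The step I expect to be the main obstacle is exactly this third one. Subdivision-Rips is a point-set level construction and is not homotopy-invariant on the nose, so there is no formal shortcut: one must use the specific combinatorics of the density-filtered barycentric subdivision and verify carefully that each homotopy produced above respects the density coordinate and not merely the scale coordinate, and that the homotopies are natural (or coherent) enough across indices for the homotopy interleaving machinery to apply. A secondary, bookkeeping-level difficulty is arranging the several roles of $\eps$ --- metric distortion, mass deficit, fiber diameter --- so that the final interleaving is diagonal of magnitude $\eps$ rather than a constant multiple of it. The remaining ingredients --- the coupling characterization of $d_{\mathrm{GP}}$, functoriality of $\srips(-)$, and Sheehy's construction for finite (sub-probability) measures --- are standard or routine.
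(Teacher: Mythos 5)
This theorem is quoted verbatim from Blumberg and Lesnick \cite{blumbergStability2Parameter2022}; the present paper cites it as Theorem~1.6\,(iii) of that work and contains no proof or sketch of it, so there is nothing in this paper to compare your argument against. Assessing the proposal on its own terms, however, there is a genuine obstruction at the step you yourself flag as the crux, and it is structural rather than a matter of bookkeeping.

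Your plan is to induce maps of subdivision bifiltrations from point maps (the composites $q\circ s_p$, $p\circ s_q$, and the round-trips $sg$) via the barycentric-subdivision functoriality $\tau\mapsto f(\tau)$. But whenever a point map $f$ identifies two vertices of a simplex $\tau$, the image $f(\tau)$ has strictly smaller dimension, and since the vertices of $\mathcal S(-)_k$ are exactly the simplices of dimension at least $k-1$, such a $\tau$ is carried \emph{out} of $\mathcal S(-)_k$ entirely: it only lands in $\mathcal S(-)_{k-j}$, where $j$ is the number of identifications inside $\tau$. Nothing in the coupling bounds $j$ by $\eps$ --- a single fiber of $p\colon W\to X$ may have cardinality up to $|X'|$ even when it carries negligible mass --- so the one-way maps $\srips(X)\to\srips(X')$ you describe are ill-typed at fixed $k$, and the round-trip homotopy is never reached. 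The same objection defeats the intermediate lemma about $1$-Lipschitz surjections with small fibers: $sg$ does not induce a map of subdivision bifiltrations at fixed $k$, so the ``straight-line simplicial homotopy'' has no domain to act on. This is exactly where $\srips$ departs from the ordinary Rips filtration, for which contiguity arguments with point maps do go through, and it is why no formal shortcut exists here.

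A second problem, which the proposal does not address, is a mismatch of units: the $\N^{\op}$ axis of $\srips(X)$ records absolute cardinality, whereas the Gromov--Prohorov distance controls mass with respect to the uniform \emph{probability} measure, so the density shift you would need is of order $\eps|X|$, not $\eps$, and the two axes are incommensurable when $|X|\neq|X'|$. Concretely, $X=\{a,b\}$ at mutual distance $\delta$ and $X'=\{a'\}$ (all within $\delta$ in a common $Z$) have Gromov--Prohorov distance at most $\delta$, yet $\srips(X)_{2,r}$ is a point for $r\geq\delta/2$ while $\srips(X')_{2,r}=\varnothing$ for all $r$; an interleaving with a shift only in the scale coordinate cannot exist. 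The Blumberg--Lesnick argument accommodates this by allowing shifts in the density coordinate as well (as the present paper notes in passing after Definition~\ref{def:2d-interleaving}) and by working with a mass-sensitive rather than cardinality-sensitive indexing. Both ingredients --- an induced map on subdivisions that tolerates collapses, and a density axis that tracks mass rather than cardinality --- would need to be built into your outline before the homotopy step can even be posed, and supplying them is essentially the content of their proof.
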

In addition,  \cite[Theorem ~$1.7$\,(ii)]{blumbergStability2Parameter2022} gives a robustness result for $\mathcal{DR}(-)$ analogous to \cref{Thm:Robustness}, formulated in terms of \emph{affine homotopy interleavings}, where the shifts between indices in the interleaving are affine maps rather than translations.  This a far weaker robustness property than that of $\srips(-)$, but is tight, in a sense.  

Rolle and Scoccola \cite{scoccolaLocally20} give a Lipschitz stability result for $\mathcal{DR}(-)$ with respect to the \emph{Gromov-Hausdorff-Prohorov distance}. However, since that distance is not robust to outliers, Rolle and Scoccola's result does not speak to the robustness of $\mathcal{DR}(-)$.

\subsection{Contributions}\label{Sec:Statements}
Together, the above results reveal a tension between between robustness, computability, and generality for density-sensitive bifiltrations: The more general and more robust a bifiltration is, the larger and less computable it is, at least with current technology.  Ideally, one would like a bifiltration that is robust (say, in the sense of \cref{Thm:Robustness}), computable, and defined for arbitrary finite metric spaces.  No such construction is known at present, and the problem of understanding what can and cannot be done in this direction is largely open.  

In this paper, we approach this problem by studying the size (up to homotopy) of $\srips(-)$, the only density-sensitive bifiltration on metric spaces known to satisfy a strong robustness property.  Specifically, we ask if and when there exists a simplicial construction weakly equivalent to $\srips(-)$ with a \emph{poly-size} (i.e., polynomial-size) $k$-skeleton for each fixed $k$.  We consider both exact and approximate formulations of this question.  For context, note that the multicover nerve theorem and the main result of \cite{corbetComputingMulticover2023} together imply that for data in $\R^d$ for $d$ fixed, there exists a poly-size cellular bifiltration weakly equivalent to $\scech(-)$, namely the rhomboid bifiltration $\mathcal T(-)$; our aim here is to develop analogous results for $\srips(-)$.  

In what follows, we state our main results, deferring some key definitions to later sections.   

Let $\Top$ and $\Simp$ denote the categories of topological spaces and finite simplicial complexes, respectively.  We regard $\Simp$ as a subcategory of $\Top$ via geometric realization.

The next definition uses the notion of weak equivalence of $\Top$-valued functors, which we review in \cref{Sec:Weak_Equiv}.

\begin{definition}
Given a poset $P$, a \emph{(simplicial) model} of a functor $\mathcal F\colon \cat{P}\to \Top$ is a functor $\mathcal F'\colon \cat{P}\to \Simp$ such that $\mathcal F$ and $\mathcal F'$ are weakly equivalent.
\end{definition}

\begin{notation}\label{Notation:m_k}
Let $T$ be a totally ordered set, e.g., $T=[0,\infty)$.  For $\mathcal F\colon T\to \Simp$ a filtration and $k\geq 0$, let $m_k=m_k(\mathcal F)$ denote the number of sets $S$ of simplices in  $\colim \mathcal F =\bigcup_{t\in T} \mathcal F_t$ such that 
\begin{itemize}
\item $|S|\leq (k+1)$ and
\item  for some $t\in T$, each $\sigma\in S$ is a maximal simplex (i.e., has no cofacet) in $\mathcal F_t$.
\end{itemize}
\end{notation}

Note that $m_0$ is simply the number of simplices in $\colim \mathcal F$ that are maximal in some $\mathcal F_t$.  Therefore,  $m_k=O(m_0^{k+1})$.

In \cref{sec:nerve-model}, we construct a certain functor $\mathcal{NF}\colon \N^{\op}\times T\to \Simp$ from a simplicial filtration $\mathcal F\colon T\to \Simp $, and prove the following, which is our main result about general subdivision bifiltrations:

\begin{theorem}\label{thm:main-theorem-intro}\label{Cor:Thm_Generalization}\mbox{}
For any finitely presented filtration $\mathcal F\colon T\to \Simp$,
    \begin{itemize}
        \item [(i)] $\mathcal{NF}$ is a simplicial model of $\subdiv$ whose $k$-skeleton has size $O(m_k)$,
        \item [(ii)] the $0$-skeleton of any simplicial model of $\subdiv$ has size at least $m_0$.
    \end{itemize}
\end{theorem}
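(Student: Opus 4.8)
The plan for (i) is to realize $\mathcal{NF}$ as a functorial nerve. Recall that $\subdiv(j,t)$ is the full subcomplex $\mathrm{Bary}^{\geq j}(\mathcal F_t)$ of the barycentric subdivision of $\mathcal F_t$ spanned by the barycenters $\hat\sigma$ of the simplices $\sigma$ with $|\sigma|\geq j$. For each $(j,t)$ I would cover $\mathrm{Bary}^{\geq j}(\mathcal F_t)$ by the closed stars $\overline{\mathrm{St}}(\hat\sigma)$, where $\sigma$ ranges over the simplices that are maximal in $\mathcal F_t$ and satisfy $|\sigma|\geq j$, and set $\mathcal{NF}_{(j,t)}$ to be the nerve of this cover. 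The first ingredient is the identity $\bigcap_{\sigma\in S}\overline{\mathrm{St}}(\hat\sigma)=\mathrm{Bary}^{\geq j}\!\big(\overline{\bigcap_{\sigma\in S}\sigma}\big)$ for a finite set $S$ of maximal simplices, with $\overline{\bigcap_{\sigma\in S}\sigma}$ the closed common face: this subcomplex is empty when $|\bigcap_{\sigma\in S}\sigma|<j$ and is otherwise a cone with apex the barycenter of $\bigcap_{\sigma\in S}\sigma$, hence contractible. Since every flag of $\mathrm{Bary}^{\geq j}(\mathcal F_t)$ extends to a maximal flag whose top is a maximal simplex of $\mathcal F_t$ of cardinality at least $j$, the stars do cover $\mathrm{Bary}^{\geq j}(\mathcal F_t)$; thus each $\mathcal{NF}_{(j,t)}$ is the nerve of a good closed cover, and a functorial nerve theorem yields $\mathcal{NF}\simeq\subdiv$ — once $\mathcal{NF}$ has been made into an honest functor $\N^{\op}\times T\to\Simp$.

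That last point is the step I expect to be the main obstacle. For $(j,t)\leq(j',t')$ the inclusion $\mathrm{Bary}^{\geq j}(\mathcal F_t)\hookrightarrow\mathrm{Bary}^{\geq j'}(\mathcal F_{t'})$ must be covered by a simplicial map of nerves, but a simplex $\sigma$ maximal in $\mathcal F_t$ can fail to be maximal in $\mathcal F_{t'}$; its barycenter then has to be sent to that of some maximal coface, and this rerouting has to be simplicial and coherent over all of $\N^{\op}\times T$ at once. The structural fact that makes this tractable is that a cofacet, once present, is never removed, so each simplex of $\colim\mathcal F$ that is ever maximal is maximal on an interval of parameters; I would build the coherent rerouting from this interval structure together with a fixed linear refinement of the face-inclusion order, and I anticipate this is where the real work of (i) lies. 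Granting the functor, the size bound is a clean count: a set $S$ of at most $k+1$ ever-maximal simplices indexes a simplex of $\mathcal{NF}_{(j,t)}$ only when $\bigcap_{\sigma\in S}\sigma\neq\emptyset$, $|\bigcap_{\sigma\in S}\sigma|\geq j$, and every $\sigma\in S$ is maximal in $\mathcal F_t$; since each $\sigma$ is maximal on an interval, such an $S$ can occur for some $(j,t)$ only if its members are simultaneously maximal at a common parameter, so the $k$-skeleton of $\mathcal{NF}$ contains at most $m_k$ simplices in total, giving the $O(m_k)$ bound.

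For (ii) the plan is to lower-bound the number of minimal generators of $H_0$. Fix a simplex $\rho$ that is maximal in $\mathcal F_s$ for some $s$, write $n=\dim\rho$, and let $b(\rho)$ be the first parameter at which $\rho$ appears; since a cofacet is never removed, $\rho$ is maximal in $\mathcal F_{b(\rho)}$. In $\subdiv(n{+}1,\,b(\rho))=\mathrm{Bary}^{\geq n+1}(\mathcal F_{b(\rho)})$ the barycenter $\hat\rho$ is an isolated vertex: a neighbor would be the barycenter of a simplex comparable to $\rho$ of cardinality at least $n{+}1$, hence a strict coface of $\rho$, and $\rho$ has none. Therefore the class $[\hat\rho]\in H_0\!\big(\subdiv(n{+}1,b(\rho))\big)$ lies in the image of no map $H_0(\subdiv(p'))\to H_0(\subdiv(n{+}1,b(\rho)))$ with $p'<(n{+}1,b(\rho))$: for such $p'$ either its first coordinate exceeds $n{+}1$ (so $\hat\rho\notin\subdiv(p')$ for dimensional reasons) or its second coordinate is below $b(\rho)$ (so $\rho\notin\mathcal F_{p'}$), and since the structure maps of $\subdiv$ are inclusions of subcomplexes, no such image meets the component $\{\hat\rho\}$. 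Distinct ever-maximal simplices of the same dimension with the same birth parameter give distinct isolated vertices at a common index, hence linearly independent classes of this type. By the graded Nakayama lemma applied to the pointwise finite-dimensional, finitely presented module $H_0(\subdiv)$ over $\N^{\op}\times[0,\infty)$, its minimal number of generators equals the sum over indices of the dimensions of these "new" subspaces, which is at least the number of ever-maximal simplices, namely $m_0$.

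To conclude (ii): for any simplicial model $\mathcal F'$ of $\subdiv$ one has $H_0(\mathcal F')\cong H_0(\subdiv)$, so $H_0(\mathcal F')$ needs at least $m_0$ generators; since $C_0(\mathcal F')$ surjects onto $H_0(\mathcal F')$ at every index, so does $C_0(\mathcal F')$, and its number of generators is at most the size of the $0$-skeleton of $\mathcal F'$. I expect this half of the theorem to be essentially routine once the isolated-vertex observation is in place; the genuine difficulty is the functorial construction of $\mathcal{NF}$ in (i).
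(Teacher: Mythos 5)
Your approach is essentially the paper's. Your cover of $\mathcal S(\mathcal F_t)_j$ by closed stars $\overline{\mathrm{St}}(\hat\sigma)$ of barycenters of maximal simplices is literally the same cover the paper uses (described there as $\{\mathcal S(\bar\sigma)_j \mid \sigma \text{ maximal in } \mathcal F_t\}$, since the closed star of $\hat\sigma$ in $\mathcal S(\mathcal F_t)_j$ is exactly $\mathcal S(\bar\sigma)_j$ when $\sigma$ is maximal); your intersection identity and cone-on-a-maximum contractibility argument are the paper's Lemmas \ref{lem:bary-int} and \ref{lem:spx-contract}; and your isolated-barycenter argument for (ii) is exactly the paper's Lemma \ref{guacaroni} and the engine behind Proposition \ref{Prop_H0_Bound}. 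Two points where your sketch falls short of a proof, though. First, you correctly identify making the cover functorial on $\N^{\op}\times T$ as the crux and defer it; the paper resolves it by fixing a well-ordering of the vertices of $\colim\mathcal F$, taking the induced lexicographic order on simplices, and sending $\sigma\in\Sigma_t$ to the lex-minimum element of $\Sigma_{t'}$ containing it. Your ``fixed linear refinement of the face-inclusion order'' is in the right spirit, but with either choice of total order one still has to check that the rule $\sigma\mapsto\min$-coface composes correctly across $t\leq t'\leq t''$ (a coface that is $\prec$-minimal in $\Sigma_{t''}$ over $\sigma$ need not be a coface of the $\prec$-minimal element of $\Sigma_{t'}$ over $\sigma$), so this step genuinely needs to be spelled out rather than asserted. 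Second, your size bound in (i) only counts simplices, i.e.\ bounds $\sum_{j\le k}\beta_0(C_j(\mathcal{NF}))$ by $m_k$; but the paper's notion of size (Definition \ref{Def_Size_Filtration}) also includes $\beta_1(C_0(\mathcal{NF}))$, which is needed because $\mathcal{NF}$ is only a semifiltration. The paper closes this by proving (Proposition \ref{Prop:Betti_Numbers_Semi_Filtrations} and Corollary \ref{Prop:Betti_Numbers_Semi_Filtrations_Other_Indices}, via Koszul homology and the Hilbert function) that finitely presented semifiltrations satisfy $\beta_1(C_j\mathcal A)\le\beta_0(C_j\mathcal A)$, so the $\beta_1$ term is absorbed into the $O(m_k)$. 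Your write-up is silent on this, and it is not a formality — without the semifiltration inequality the claimed bound is not established.
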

See \cref{Sec:Size} for the definition a finitely presented filtration and \cref{Def_Size_Filtration} for the definition of size used here.  

In brief, to construct $\mathcal{NF}$, we cover each simplicial complex $\mathcal F_t$ of $\mathcal F$ by its maximal simplices.  These covers induce a \emph{functorial cover} of $\subdiv$; we define $\mathcal{NF}$ to be the nerve of this functorial cover.  To show that $\mathcal{NF}$ is a model of $\subdiv$, we use a functorial version of the nerve theorem \cite{bauerUnifiedView2023}.  See \cref{sec:nerve-model,subsec:nerve-model} for the details.  

We give several applications of \Cref{thm:main-theorem-intro}.  
We use a notion of \emph{$\delta$-approximation} of bifiltrations, whose definition, given in \cref{sec:Interleavings}, is formulated in terms of \emph{(multiplicative) homotopy interleavings} \cite{blumberg2023universality}.  

\begin{corollary}\label{cor:intro-3}~
    \begin{itemize}
 \item[(i)] \begin{sloppypar} For any finite metric space $X$, there exists a simplicial $\sqrt{2}$-approximation $\mathcal J(X)$ to $\srips(X)$ whose $k$-skeleton has size $O(|X|^{k+2})$. \end{sloppypar} 
 \item[(ii)]  There exists an infinite family of finite metric spaces $X$ such that for any $\delta\in [1,\sqrt{2})$, there is no simplicial $\delta$-approximation to $\srips(X)$ with size polynomial in $|X|$.
     \end{itemize}
\end{corollary}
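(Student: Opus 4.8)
For \textup{(i)} the plan is to construct, for each finite metric space $X$, a simplicial filtration $\mathcal G=\mathcal G_X\colon \T\to\Simp$ that is multiplicatively homotopy $\sqrt2$-interleaved with $\rips(X)$ and has $m_k(\mathcal G)=O(|X|^{k+2})$ for every fixed $k$, and then to set $\mathcal J(X):=\mathcal N\mathcal G$. Granting this, part \textup{(i)} is immediate: by \Cref{thm:main-theorem-intro}\,(i) the bifiltration $\mathcal N\mathcal G$ is a simplicial model of $\mathcal S\mathcal G$ with $k$-skeleton of size $O(m_k(\mathcal G))=O(|X|^{k+2})$, and since $\mathcal S(-)$ is functorial and preserves weak equivalences it carries the interleaving $\mathcal G\simeq_{\sqrt2}\rips(X)$ to a multiplicative homotopy $\sqrt2$-interleaving $\mathcal S\mathcal G\simeq_{\sqrt2}\mathcal S(\rips(X))=\srips(X)$, so $\mathcal J(X)=\mathcal N\mathcal G$ is a simplicial $\sqrt2$-approximation to $\srips(X)$. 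The real work in \textup{(i)} is thus the construction of a polynomially-bounded $\mathcal G$ that is $\sqrt2$-interleaved with $\rips(X)$ — a kind of ``sparse Rips'' filtration with a uniform, spread-independent size bound — together with the verification of the interleaving constant $\sqrt2$ (a Jung-type estimate) and of $m_k(\mathcal G)=O(|X|^{k+2})$.

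For \textup{(ii)} I would first record a \emph{size-versus-rank} principle: if $\mathcal F'$ is a simplicial $\delta$-approximation to a bifiltration $\mathcal G$ and $p\le q$ are indices with $q$ beyond the $\delta^2$-shift of $p$, then the structure map $H_k(\mathcal G_p)\to H_k(\mathcal G_q)$ factors, up to isomorphism, through some $H_k(\mathcal F'_r)$; hence the $k$-skeleton of $\mathcal F'$ has at least $\rank\!\big(H_k(\mathcal G_p)\to H_k(\mathcal G_q)\big)$ simplices. It therefore suffices to produce metric spaces $X_n$ with $|X_n|$ polynomial in $n$, together with multiplicity levels $\mu_n$ and reals $\epsilon_n\to0$, such that for every $\mu'$ with $(\tfrac12+\epsilon_n)\mu_n<\mu'\le\mu_n$ the structure map $H_0(\srips(X_n)_{(\mu_n,1)})\to H_0(\srips(X_n)_{(\mu',2-\epsilon_n)})$ has rank $2^{\Omega(n)}$. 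Given such a family, fix $\delta\in[1,\sqrt2)$; once $n$ is large enough that $\delta^2<(\tfrac12+\epsilon_n)^{-1}$, the $\delta^2$-shift of $(\mu_n,1)$ has multiplicity $>(\tfrac12+\epsilon_n)\mu_n$ and radius $<2$, so the principle applies with $k=0$ and shows that every simplicial $\delta$-approximation of $\srips(X_n)$ has $0$-skeleton of size $2^{\Omega(n)}$, superpolynomial in $|X_n|$.

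To build the $X_n$ I would take $X_n$ to be the metric realization of a graph $G_n$ with edge-lengths $1$ and non-edge-lengths $2-\epsilon_n$ — always a metric — so that $\rips(X_n)$ is constant on $[1,2-\epsilon_n)$, equal to the clique complex of $G_n$. The graph $G_n$ should be chosen so that its clique complex has $2^{\Omega(n)}$ facets, all of a common size $\mu_n$, that are \emph{spread out}: any two facets meet in fewer than $(\tfrac12+\epsilon_n)\mu_n$ vertices, and no clique of size exceeding $(\tfrac12+\epsilon_n)\mu_n$ lies in two facets. For such a $G_n$ one shows that the components of $\srips(X_n)_{(\mu,t)}$ correspond bijectively to the facets of size $\ge\mu$ whenever $\mu>(\tfrac12+\epsilon_n)\mu_n$, and that the structure map on $\pi_0$ from $(\mu_n,1)$ to any index in this range is a bijection, yielding rank $2^{\Omega(n)}$.

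The crux — and the main obstacle — is the construction of $G_n$: one needs superpolynomially many facets that are pairwise almost disjoint \emph{and} that generate no ``spurious'' large clique bridging two facets in the subdivision; equivalently, the facet family must be exactly the set of maximal cliques of its union graph, with each sufficiently large clique extending uniquely to a facet. The almost-disjointness requirement is itself delicate — facets meeting in fewer than $\tfrac12\mu_n$ vertices correspond to binary codewords at distance exceeding $\tfrac12\mu_n$, and the Plotkin bound precludes exponentially many of these exactly at the threshold $\tfrac12\mu_n$; this is why the sharp constant is $\sqrt2$, why one must let $\epsilon_n\to0$ instead of setting $\epsilon_n=0$, and why \textup{(i)} and \textup{(ii)} are consistent (no single $X_n$ obstructs $\sqrt2$-approximation, only every $\delta<\sqrt2$ asymptotically). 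Producing a gadget $G_n$ that has both superpolynomially many spread-out facets and no bridging cliques is the delicate combinatorial point the argument turns on.
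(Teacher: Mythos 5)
The high-level framework you describe matches the paper's proof in both parts, but in both parts you explicitly defer the central construction---``the real work'' in (i), ``the crux'' in (ii)---and that deferred step is where the actual content lies, so the proposal has genuine gaps.

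For (i), the paper takes $\mathcal G$ to be a $\sqrt 2$-rescaling of the \emph{intrinsic \v Cech filtration} $\icech(X)$, not a sparse-Rips construction. The two-sided containment $\icech(X)_r \subset \rips(X)_r \subset \icech(X)_{2r}$ is just the triangle inequality---there is no Jung-type estimate available or needed for an arbitrary finite metric space---and $m_k(\icech(X)) \leq |X|^{k+2}$ follows because every maximal simplex of $\icech(X)_r$ is a metric ball $B(x)_r$ with $x\in X$, so across all radii there are at most $|X|^2$ maximal simplices (choice of center and of radius threshold), hence $O(|X|^{k+1}\cdot |X|)$ candidate $(k{+}1)$-sets. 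Your mention of ``sparse Rips'' and ``Jung'' points away from this: sparse-Rips constructions control spread, not an absolute $|X|$-polynomial count of maximal simplices, and Jung's theorem requires a Euclidean ambient space.

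For (ii), the gadget you are missing is the cocktail party graph $C_n$, with $X$ its vertex set and the metric twice the shortest-path metric. Your factoring/rank lower bound is the same as the paper's, but the combinatorial constraints you impose on $G_n$ are stronger than necessary and obscure why the example works. You do not need facets to be pairwise almost disjoint or to rule out ``bridging cliques'': by \cref{guacaroni}, the barycenter of \emph{any} simplex that is maximal at its birth index is an isolated vertex of $\subdiv$ there, irrespective of how maximal simplices overlap. The graph $C_n$ has exactly $2^n$ maximal cliques (one per transversal of the $n$ antipodal pairs), each of size $n$, so $\srips(X)_{(n,1)}$ has $2^n$ isolated vertices. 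Moreover there is no need for an $\epsilon_n\to 0$ perturbation: since the metric takes only the two values $2$ and $4$, $\rips(X)$ is constant on $[1,2)$, so $\rips(X)_1 = \rips(X)_{\delta^2}$ for every $\delta<\sqrt 2$ outright, and the factoring argument applies immediately. Your Plotkin-bound heuristic is a correct sanity check on why the threshold is $\sqrt 2$ and why (i) and (ii) are consistent, but it is not needed to produce the example.
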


\begin{remark}\label{rem:Robustness}~
\begin{itemize}
\item[(i)] We define $\mathcal J(X)$ as a linear rescaling of $\mathcal {N I}(X)$, where $\mathcal I(X)$ is the \emph{intrinsic \v Cech filtration} of $X$ \cite[Section 4.2.2]{chazal2014persistence},
\item[(ii)] Hellmer and Spaliński \cite{hellmerDensitySensitive2024} have independently introduced a bifiltration which is weakly equivalent to $\mathcal {N I}(X)$ and has the same worst-case asymptotic size. 
\item[(iii)] It follows from \cite[Proposition ~$3.4$]{blumbergStability2Parameter2022} that a linear rescaling of $\mathcal{DR}(X)$ is a $\sqrt{3}$-approximation of $\srips(X)$.  \Cref{cor:intro-3} thus gives an improved (optimal) approximation factor of $\sqrt{2}$, via a construction of the same asymptotic size.  Using this approximation bound, arguments in \cite{blumbergStability2Parameter2022} extend immediately to show that $\mathcal J(X)$ satisfies the same robustness property as the one given for $\mathcal{DR}(X)$ in \cite[Theorem ~$1.7$\,(ii)]{blumbergStability2Parameter2022}, but with an improved constant of 2 replacing the constant 3 in that result.  Hence, as the constant 3 in the robustness result for $\mathcal{DR}(X)$ is tight \cite[Proposition 3.7]{blumbergStability2Parameter2022}, $\mathcal J(X)$ is more robust to outliers than $\mathcal{DR}(X)$.
\end{itemize}
\end{remark}

When $\delta=1$,  \cref{cor:intro-3}\,(ii) says that there exists no poly-sized model of $\srips(X)$ for arbitrary finite metric spaces. In fact, this is true even for $X\subset \R^2$, with the usual $\ell_2$ metric: 

\begin{corollary}\label{cor:intro-plane}\mbox{}
\begin{itemize}
    \item[(i)] For $X$ a finite set of uniformly spaced points on the unit circle, any simplicial model of $\srips(X)$ has exponential size in $|X|$.
    \item[(ii)] Let $X$  be a finite i.i.d.\ sample of the uniform distribution on the unit square.   There exists a constant $C > 0$ such that
 for any simplicial model $\mathcal F(X)$ of $\srips(X)$,    \[
    \operatorname{Pr}[\textup{size of } \mathcal F(X) \textup{ is at least }\exp(C|X|^{1/3})] \rightarrow 1
    \]
    as $|X| \rightarrow \infty$.
\end{itemize}
\end{corollary}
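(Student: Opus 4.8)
The plan is to deduce both parts of \cref{cor:intro-plane} from \cref{thm:main-theorem-intro}\,(ii), which lower-bounds the size of the $0$-skeleton of any simplicial model of $\subdiv$ by $m_0(\mathcal F)$, the number of simplices in $\colim \mathcal F$ that are maximal in \emph{some} $\mathcal F_t$. Since $\srips(X)=\mathcal{S}\rips(X)$, the subdivision bifiltration of the Rips filtration $\rips(X)$, it suffices to count, for each of the two families of planar point sets, how many simplices of $\rips(X)$ arise as maximal simplices of $\rips_r(X)$ for some scale $r$. A $k$-simplex $\sigma$ on vertex set $S\subseteq X$ is present in $\rips_r(X)$ exactly when $r\geq \operatorname{diam}(S)$, and it is maximal there precisely when no proper superset $S'\supsetneq S$ also has $\operatorname{diam}(S')\le r$; equivalently, $\sigma$ is maximal in some $\rips_r(X)$ iff taking $r=\operatorname{diam}(S)$ leaves $\sigma$ with no cofacet, i.e.\ every $y\in X\setminus S$ satisfies $\operatorname{diam}(S\cup\{y\})>\operatorname{diam}(S)=:d(S)$. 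So the combinatorial task is: lower-bound the number of subsets $S\subseteq X$ such that adding any outside point strictly increases the diameter.

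For part (i), with $X$ the $n$ equally spaced points on the unit circle, I would exhibit exponentially many such ``diameter-maximal'' subsets directly from the cyclic geometry. A clean choice: for each arc of the circle spanning slightly less than a semicircle, the points of $X$ in that arc form a set $S$ whose diameter is the distance between its two extreme points; adding any point outside the arc either lies on the ``far'' side and increases the diameter, or — and this is the case to rule out — lies between the two extremes, which cannot happen since $S$ already contains \emph{all} points of $X$ in the arc. Thus every ``consecutive block'' of points subtending less than a semicircle is diameter-maximal, and for the right choice of block length there are $\Omega(2^{n})$ (indeed $\Theta(n\cdot 2^{n/2})$ or similar, and in any case $\exp(\Omega(n))$) such blocks when we also allow blocks of all sizes up to the semicircular threshold; I would pick the counting so the bound is cleanly exponential in $n=|X|$. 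Feeding this into \cref{thm:main-theorem-intro}\,(ii) gives the claimed exponential lower bound on the $0$-skeleton, hence on the size.

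For part (ii), the point is that an i.i.d.\ uniform sample of the unit square contains, with probability tending to $1$, a large ``near-collinear'' or well-spread sub-configuration forcing many diameter-maximal subsets, but the right quantitative handle is the standard one: subdivide the square into a grid of $\Theta(|X|^{2/3})$ cells of side $\Theta(|X|^{-1/3})$; with high probability, a positive fraction of cells are nonempty, and by a maximal-separated-set / packing argument one extracts a subset $Y\subseteq X$ of size $k=\Theta(|X|^{1/3})$ that is ``$c$-separated'' relative to its diameter, meaning pairwise distances are all within a bounded ratio. For such a $Y$, a constant fraction of the $2^{k}$ subsets of $Y$ are diameter-maximal \emph{within $X$} — here one must check that points of $X\setminus Y$ typically lie close to some point of $Y$ and hence do not preserve the diameter when adjoined, which is where the separated-set construction is used — yielding $\exp(\Omega(|X|^{1/3}))$ maximal simplices and thus the stated probabilistic bound via \cref{thm:main-theorem-intro}\,(ii). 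The main obstacle, and the part requiring genuine care rather than bookkeeping, is precisely this last verification for the random case: showing that adding an arbitrary outside point to a subset of the extracted well-spread set $Y$ strictly increases the diameter, uniformly over exponentially many subsets, with the failure probability summing to $o(1)$. This forces the ratio in the ``$c$-separated'' condition to be chosen carefully (so that the diameter of $S\cup\{y\}$ genuinely exceeds that of $S$ whenever $y$ is outside the small neighborhood of $Y$), and forces a union bound over cells that is only affordable because $k=\Theta(|X|^{1/3})$ rather than larger — this is exactly what pins the exponent at $1/3$.
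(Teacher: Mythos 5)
Your overall strategy is the same as the paper's: characterize $m_0(\rips(X))$ as the number of maximal cliques of the neighborhood graph (across all scales) and then invoke \cref{thm:main-theorem-intro}\,(ii). The characterization of maximality you give — $\sigma$ on vertex set $S$ is maximal in some $\rips_r(X)$ iff every $y\in X\setminus S$ strictly increases the diameter — is correct. But both halves of your execution have problems.

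For part (i), your construction produces far too few sets. You propose taking $S$ to be \emph{all} of the points of $X$ in an arc shorter than a semicircle, i.e.\ a consecutive block. The number of consecutive blocks of $n$ points on a circle is $O(n^2)$ (choose start and length), so this family is polynomially bounded, not $\exp(\Omega(n))$, and the parenthetical count $\Theta(n\cdot 2^{n/2})$ is simply incorrect. The point of the construction in the paper is precisely that the exponentially many maximal cliques at radius slightly below $1$ are \emph{not} consecutive: for $2n$ uniformly spaced points, the neighborhood graph at that radius is the cocktail-party graph $C_n$ (every edge present except the $n$ antipodal pairs), so a maximal clique consists of an arbitrary choice of one vertex from each antipodal pair, giving $2^n$ maximal cliques that are spread all the way around the circle. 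Your argument that "a point outside the arc cannot lie between the two extremes since $S$ already contains all points in the arc" is exactly what forces you to take all-consecutive sets and hence caps you at $O(n^2)$.

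For part (ii), your grid/packing/union-bound plan is a sketch of a from-scratch proof of the fact that random geometric graphs on $[0,1]^2$ at constant radius have $\exp(\Theta(n^{1/3}))$ maximal cliques with high probability. You yourself flag the union bound over exponentially many subsets as the part "requiring genuine care rather than bookkeeping" and do not carry it out. This is a genuine known theorem (Yamaji, \cite{yamajiNumberMaximal2023a}, restated in the paper as \cref{theo:geo-graph-cliques}); the paper's proof of \cref{cor:intro-plane}\,(ii) consists of citing it and combining with \cref{thm:main-theorem-intro}\,(ii). Your outline gives the right picture but leaves precisely the hardest step unproved; to actually complete it you would in effect be re-proving Yamaji's result.
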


Informally, \cref{cor:intro-plane}\,(ii) tells us that for random data sets $X\subset [0,1]^2$, the size of any model of $\srips(X)$ is exponential in $|X|$ with high probability.  The proof uses a bound on the number of maximal cliques in random geometric graphs, due to Yamaji \cite{yamajiNumberMaximal2023a}.

For $p\in [1,\infty]$, let $\R^d_p$ denote $d$-dimensional Euclidean space, endowed with the $\ell_p$ metric. 

\begin{corollary}\label{cor:intro-2}
 Fixing $d\in \N$, let $X\subset \R^d_p$.
    \begin{itemize}
        \item [(i)] If $d=1$ or $p\in \{1, \infty\}$, then $\mathcal{NR}(X)$ is a simplicial model of $\srips(X)$ whose $k$-skeleton has size $O(|X|^{\alpha(k+1)+2})$, where 
        \[\alpha=\alpha(d,p)=\begin{cases}
        d &\text{ for  $p=\infty$ or $d=1$ or $(d,p)=(2,1)$},\\
        d2^{d^2-1} &\text{ for $p=1$ and $d>2$}. 
        \end{cases}\]
        \item [(ii)] For any $\eps \in (0,1)$, there exists a $(1+\eps)$-approximation to $\srips(X)$ whose $k$-skeleton has size $O(|X|^{\beta (k+1)+2}),$ where 
        $\beta = c_d \eps ^{-(d^2-1)/2}$ for some constant $c_d$ depending $d$.  
        \end{itemize}
\end{corollary}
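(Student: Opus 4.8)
The plan is to derive both parts from \cref{thm:main-theorem-intro}\,(i), applied to the filtration $\mathcal F=\rips(X)$ (so that $\subdiv=\srips(X)$), which is finitely presented since $X$ is finite. That result already provides a simplicial model $\mathcal{NR}(X)=\mathcal N(\rips(X))$ of $\srips(X)$, so the whole task is to bound the quantity $m_k(\rips(X))$ of \cref{Notation:m_k}. I would do this in three steps. First, as $t$ increases, $\rips(X)_t$ changes only when $t$ crosses one of the $\binom{|X|}{2}$ pairwise distances, so there are $O(|X|^2)$ distinct complexes along the filtration. Second, for fixed $t$ the maximal simplices of $\rips(X)_t$ (the clique complex of the threshold graph $G_t$ on $X$ whose edges are the pairs at distance $\le t$) are exactly the maximal cliques of $G_t$; hence if $\mu$ is an upper bound, uniform in $t$, on the number of maximal cliques of $G_t$, then every set counted by $m_k$ is a subset of size $\le k+1$ of the maximal cliques of one of the $O(|X|^2)$ complexes, so $m_k(\rips(X))=O(|X|^2\mu^{k+1})$. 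Third, combining with \cref{thm:main-theorem-intro}\,(i), the $k$-skeleton of $\mathcal{NR}(X)$ has size $O(|X|^2\mu^{k+1})$, and a bound $\mu=O(|X|^{\gamma})$ yields size $O(|X|^{\gamma(k+1)+2})$. Everything therefore reduces to bounding the number of maximal cliques of a single $\ell_p$-threshold graph in $\R^d$.

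For $p=\infty$ this is clean: $G_t$ for $X\subset\R^d_\infty$ is the intersection graph of $|X|$ congruent axis-parallel cubes. Axis-parallel boxes have Helly number $2$, so each clique of $G_t$ is exactly the set of cubes through some common point, and the map ``set of cubes containing $x$'' is constant on the cells of the grid arrangement cut out coordinatewise by the $2|X|$ facet-values; there are $O(|X|^d)$ such cells, so $\mu=O(|X|^d)$ and $\alpha=d$. The case $d=1$ is interval graphs, for which $\mu\le|X|$; the case $(d,p)=(2,1)$ reduces to $p=\infty$ via the linear isometry $(x,y)\mapsto(x+y,x-y)$ of $\R^2_1$ onto $\R^2_\infty$. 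For $p=1$ with $d>2$, the $\ell_1$-ball is still a polytope (a cross-polytope), so one can either run the same arrangement count through its facet hyperplanes or use the isometric embedding of $\R^d_1$ into the $\ell_\infty$-space indexed by the facet directions of the cross-polytope; either way $\mu$ is polynomial in $|X|$, with exponent $\alpha$ of the stated order. As these reductions are exact, no approximation is introduced, so $\mathcal{NR}(X)$ is an exact model of $\srips(X)$, proving (i).

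For part (ii), with $1<p<\infty$, the $\ell_p$-ball is not a polytope, so $G_t$ may have superpolynomially many maximal cliques --- consistent with \cref{cor:intro-plane} --- and we must first replace the $\ell_p$-metric with a polytopal one. Fix a centrally symmetric polytope $P\subset\R^d$ with $\|x\|_p\le\|x\|_P\le(1+\eps)\|x\|_p$ and with $N$ facets, where $N$ is polynomial in $1/\eps$ (with $d$ fixed), by standard polytopal approximation of convex bodies; let $d_P$ be the metric of $\|\cdot\|_P$. A simplex of diameter $\le t$ in one of the metrics $\ell_p,d_P$ has diameter $\le(1+\eps)t$ in the other, so $\rips_{\ell_p}(X)$ and $\rips_{d_P}(X)$ are $(1+\eps)$-interleaved; since the subdivision construction $\mathcal F\mapsto\subdiv$ is functorial and takes a $(1+\eps)$-interleaving of filtrations to a $(1+\eps)$-homotopy-interleaving of bifiltrations (cf.\ the Lipschitz estimates underlying \cite{blumbergStability2Parameter2022}), the bifiltrations $\srips(X)$ and $\mathcal S(\rips_{d_P}(X))$ are $(1+\eps)$-homotopy-interleaved. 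Hence the model $\mathcal N(\rips_{d_P}(X))$ furnished by \cref{thm:main-theorem-intro}\,(i) (again from a finite, hence finitely presented, filtration) is a $(1+\eps)$-approximation of $\srips(X)$, and by the three steps above its $k$-skeleton has size $O(|X|^2\mu^{k+1})$, where now $\mu$ bounds the maximal cliques of a $d_P$-threshold graph --- the intersection graph of $|X|$ translates of $P$. Projecting onto the facet directions of $P$ realizes this as a box intersection graph in that many dimensions, so $\mu$ is polynomial in $|X|$ with exponent controlled by $N$; substituting the bound for $N$ gives $\mu=O(|X|^{\beta})$ with $\beta=c_d\eps^{-(d^2-1)/2}$ as claimed.

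The main obstacle is the combinatorial-geometry input of the second and third steps: a good polynomial bound on the number of maximal cliques of an $\ell_p$- or polytopal threshold graph in $\R^d$. The $p=\infty$ case is forced by the Helly number $2$ of axis-parallel boxes, but the cross-polytope and the approximating polytopes for $1<p<\infty$ lack this property in the ambient dimension, so one must pass to a higher-dimensional $\ell_\infty$ picture --- exactly for $p=1$, with controlled distortion for $1<p<\infty$ --- or argue directly with arrangements of polytope translates; carrying out this bookkeeping while keeping the exponents polynomial, and for (ii) polynomial in $1/\eps$, is where the real care lies. The one remaining ingredient --- that a $(1+\eps)$-interleaving of metrics induces a $(1+\eps)$-homotopy-interleaving of the corresponding subdivision-Rips bifiltrations --- is formal, given functoriality of the subdivision construction together with the estimates of \cite{blumbergStability2Parameter2022}.
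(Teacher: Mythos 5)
Your plan follows the paper's strategy in broad outline — reduce to bounding $m_k(\rips(X))$ (or $m_k$ of a polytopal approximation), apply \cref{thm:main-theorem-intro}\,(i), and handle the interleaving for part (ii) exactly as in \cref{Prop:Inerleaving_and_Subdivision} — but for the key clique-counting step you take a genuinely different and, in fact, sharper route. The paper handles the $p=1,\, d>2$ case and part (ii) by viewing the threshold graph as an intersection graph of convex polytopes in $\R^d$ and invoking the theorem of Brimkov et al. (\cref{prop:brimkov}), which bounds maximal cliques by $n^{dk^{d+1}}$ in terms of the number $k$ of facet directions; this is what produces the exponents $\alpha = d2^{d^2-1}$ and $\beta = c_d\,\eps^{-(d^2-1)/2}$. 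You instead isometrically embed $\R^d_1$ (resp.\ $\R^d$ with the polytopal norm) into $\ell_\infty^{k}$, $k$ being the number of facet-direction pairs, and then apply the elementary box/Helly bound of \cref{lem:Lavrov}, which gives at most $|X|^{k}$ maximal cliques. This sidesteps Brimkov's theorem entirely and yields a noticeably smaller exponent: $2^{d-1}$ rather than $d2^{d^2-1}$ for $p=1$, and $O(\eps^{-(d-1)/2})$ rather than $O(\eps^{-(d^2-1)/2})$ for part (ii). Your remark that this gives exponents ``of the stated order'' undersells your own argument — it gives strictly better ones, so the corollary follows a fortiori, though one would want to record the sharper constants. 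Two small points of care: the embedding argument requires the approximating polytope to be centrally symmetric (as you note); the Dudley-type construction in \cref{thm:polytope-hdorff} does not automatically produce a symmetric $P$, but replacing $P$ by $P\cap(-P)$ or symmetrizing the packing fixes this at the cost of a factor of $2$ in the facet count. And for the interleaving-to-subdivision step, the paper proves the precise statement you need (\cref{Prop:Inerleaving_and_Subdivision} gives a strict $(1+\eps)$-interleaving, not merely a homotopy interleaving), so the appeal to ``Lipschitz estimates underlying'' a different paper is unnecessary.
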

The main takeaway here is that for fixed $d$,  $\epsilon$, and $k$, the $k$-skeleta considered in  \cref{cor:intro-2} have size polynomial in $|X|$.   However, the quantities $\alpha(d,1)$ and $\beta$ appearing in the exponents of the bounds are large and grow very quickly with $d$.  It might be possible to improve these bounds.

We conjecture that  \Cref{cor:intro-2} (ii) extends to finite metric spaces of bounded doubling dimension \cite[Section $1.3$]{guptaBoundedGeometries2003}, as follows:

\begin{conjecture}
    For $X$ a finite metric space of bounded doubling dimension and fixed $\eps > 0$, there exists a $(1+\eps)$-approximation to $\srips(X)$ whose fixed-dimensional skeleta have size polynomial in $|X|$.
\end{conjecture}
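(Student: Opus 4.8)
The plan is to reduce, via \cref{thm:main-theorem-intro}(i), to a suitable \emph{sparsification} of the Rips filtration, using bounded doubling dimension only to control the number of maximal simplices of the sparsification. Precisely: \cref{thm:main-theorem-intro}(i), together with the fact that $\mathcal S(-)$ carries a $(1+\eps)$-approximation of filtrations to a $(1+\eps)$-approximation of their subdivision bifiltrations (an ingredient already used to deduce \cref{cor:intro-2}(ii)), reduces the conjecture to the following statement: \emph{for each fixed $\eps>0$ and each finite $X$ of doubling dimension $d$, there is a finitely presented simplicial filtration $\mathcal F=\mathcal F_\eps(X)\colon T\to\Simp$ that is a multiplicative $(1+\eps)$-approximation of $\rips(X)$ in the sense of \cref{sec:Interleavings} and has $m_0(\mathcal F)$ bounded by a polynomial in $|X|$, with the degree and implied constants allowed to depend on $d$ and $\eps$.} Indeed, granting such an $\mathcal F$: $\subdiv$ is then a $(1+\eps)$-approximation of $\srips(X)$, and by \cref{thm:main-theorem-intro}(i) the functor $\mathcal{NF}$ is a simplicial model of $\subdiv$ whose $k$-skeleton has size $O(m_k(\mathcal F))=O(m_0(\mathcal F)^{k+1})$, hence polynomial in $|X|$ for each fixed $k$; a linear rescaling of $\mathcal{NF}$ supplies the desired $(1+\eps)$-approximation of $\srips(X)$.

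The construction of $\mathcal F$ is where the doubling hypothesis enters, and here I would adapt the sparse Vietoris--Rips filtration of Sheehy and of Cavanna--Jahanseir--Sheehy. Fix a greedy permutation of $X$, equivalently a net tree; this assigns to each point $x\in X$ a deletion scale $\lambda_x$, roughly the radius at which $x$ becomes redundant given the points preceding it. For a relaxation parameter tuned to $\eps$, one builds weighted (perturbed) complexes $\mathcal F_t$ in which $x$ participates only for $t\lesssim\lambda_x$, with the perturbation arranged so that $t\mapsto\mathcal F_t$ is a genuine filtration of nested subcomplexes --- in particular finitely presented --- that is $(1+\eps)$-interleaved with $\rips(X)$ at the level of filtered spaces. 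The standard packing bound for doubling metrics --- any radius-$R$ ball contains at most $(R/r)^{O(d)}$ points that are pairwise more than $r$ apart --- shows that at every scale the active vertex set of $\mathcal F$ is a net adapted to that scale, which bounds the local complexity of each $\mathcal F_t$ and, by the usual telescoping argument, forces the total number of simplices of $\mathcal F$ to be $\eps^{-O(d)}\,|X|$. In particular $m_0(\mathcal F)=\eps^{-O(d)}|X|$, which is linear in $|X|$ for fixed $d$ and $\eps$ --- more than enough for the reduction above.

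The step I expect to be the genuine obstacle is reconciling the sparsification with the subdivision construction. Two points are routine bookkeeping: checking that the sparse filtration is finitely presented in the sense of \cref{Sec:Size}, so that \cref{thm:main-theorem-intro} applies verbatim, and re-deriving the sparse-Rips guarantee as a \emph{multiplicative homotopy interleaving of filtered spaces} (the literature often phrases it at the level of persistence modules, or with additive error), for which the explicit simplicial interleaving maps of the ``geometric'' formulation of Cavanna--Jahanseir--Sheehy are the natural tool. The real difficulty is that these interleaving maps are not inclusions --- they collapse deleted vertices onto their net representatives --- and $\mathcal S(-)$ is not strictly functorial with respect to non-injective simplicial maps, since such a map can carry a highly covered vertex of a barycentric subdivision to a less covered one. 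Pushing a $(1+\eps)$-approximation through $\mathcal S(-)$ therefore requires the homotopy-theoretic functoriality of the subdivision bifiltration --- the phenomenon underlying \cref{Thm:Robustness} and \cref{rem:Robustness}(iii) --- and one must verify that the sparse interleaving can be realized compatibly with it, for instance by choosing net representatives coherently across scales, or by routing the comparison through an auxiliary intrinsic-\v{C}ech-type filtration as in \cref{cor:intro-3}(i). Finally, one should track how $d$ and $\eps$ enter the exponents, and ask whether a more careful net hierarchy improves them, in parallel with the discussion following \cref{cor:intro-2}.
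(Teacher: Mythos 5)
This statement is labeled a conjecture in the paper, so there is no proof in the paper to compare your attempt against; anything that closed the gaps you identify would settle an open question posed by the authors.

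Your reduction --- apply \cref{thm:main-theorem-intro}(i) to a sparse $(1+\eps)$-approximation of $\rips(X)$ with $m_0$ polynomial in $|X|$, then push the interleaving through $\mathcal{S}(-)$ --- is the natural strategy, and your instinct that the real difficulty is reconciling sparsification with subdivision is exactly right. But the obstacle is sharper than your sketch suggests. \cref{Prop:Inerleaving_and_Subdivision} does not say that $\mathcal{S}(-)$ carries $(1+\eps)$-approximations, in the homotopy-interleaving sense, to $(1+\eps)$-approximations. Its proof works only because a strict interleaving between two \emph{filtrations} automatically has monomorphic structure maps; this is what is exploited in \cref{cor:intro-2}(ii), where the polytopal Rips complex is literally sandwiched between two honest Rips complexes, so the interleaving is by inclusions. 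The sparse-Rips interleaving maps are vertex-collapsing retractions, and $\mathcal{S}(-)$ is simply not a functor on non-injective simplicial maps: for the collapse $f\colon \Delta^2\to \Delta^0$, the complex $\mathcal{S}(\Delta^2)_3$ is a point (the barycenter of the $2$-face) while $\mathcal{S}(\Delta^0)_3=\emptyset$, so there is no map $\mathcal{S}(f)_3$ at all, let alone a homotopy equivalence. The same example shows that $\mathcal{S}(-)$ does not preserve weak equivalences, so passing to homotopy interleavings does not rescue the argument --- nor should one expect it to, since the whole point of $\mathcal{S}(-)$ is to record multiplicity and dimension data that homotopy equivalence at a fixed scale discards. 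This means the ``homotopy-theoretic functoriality'' you hope to appeal to is unavailable in the form you need, and ``choosing net representatives coherently'' does not change the fact that the collapse lowers simplex dimension.

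Closing the gap would, as far as I can tell, require one of two genuinely new ingredients, neither supplied in the paper or in your sketch: either (a) a doubling-dimension-driven sparse filtration that is $(1+\eps)$-interleaved with $\rips(X)$ \emph{by inclusions}, so that \cref{Prop:Inerleaving_and_Subdivision} applies verbatim (the difficulty being that sparsification inherently deletes vertices, so only one direction of the interleaving can be an inclusion); or (b) a direct interleaving of $\mathcal{S}$ of the sparse and full Rips filtrations, for instance by realizing both subdivision bifiltrations as nerves of functorial covers in the sense of \cref{Sec:Nerve_Model_Subdiv} and interleaving the covers rather than the complexes. Either would be a contribution beyond what the paper establishes, which is presumably why the statement is left open.
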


We remark that the functor $\mathcal{NF}$ of \cref{thm:main-theorem-intro}\,(i), which underlies all of our main results, is generally not a bifiltration, but is rather a \emph{semifiltration}, meaning that the structure maps in only one of the two parameter directions are required to be inclusions.  In \cref{Semi_Filtration_to_Bifilration}, we observe that by extending a construction of Kerber and Schreiber \cite{kerber2019barcodes} in the 1-parameter setting, we can convert a simplicial semifiltration of bounded pointwise dimension to a bifiltration, with only logarithmic increase in size.  One can use this construction to give variants of \cref{thm:main-theorem-intro}\,(i), \cref{cor:intro-3}\,(i), and \cref{cor:intro-2} where the functors are bifiltrations.  For example, we have the following corollary of \cref{thm:main-theorem-intro}\,(i):

\begin{corollary}
For a finitely presented filtration $\mathcal F\colon \N\to \Simp$ and fixed $k\geq 0$, there exists a \emph{bifiltered} simplicial model of the $k$-skeleton of $\subdiv$ whose size is $O(m_k \log m_0)$.
\end{corollary}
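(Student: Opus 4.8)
The plan is to combine the semifiltration model of \cref{thm:main-theorem-intro}\,(i) with the semifiltration-to-bifiltration conversion recorded in \cref{Semi_Filtration_to_Bifilration}. By \cref{thm:main-theorem-intro}\,(i), $\mathcal{NF}\colon\N^{\op}\times\N\to\Simp$ is a simplicial model of $\subdiv$ whose $k$-skeleton has size $O(m_k)$; moreover the proof of that statement shows that the truncation $(\mathcal{NF})^{(k)}$ is itself a simplicial model of the $k$-skeleton of $\subdiv$. (The point is that the functorial nerve theorem of \cite{bauerUnifiedView2023} is applied to the cover of each $\mathcal F_t$ by its maximal simplices, and the $k$-skeleton of a nerve is the nerve of the subsystem recording only $(k{+}1)$-fold intersections; the same nerve theorem, applied to that subsystem, identifies $(\mathcal{NF})^{(k)}$ with the $k$-skeleton of $\subdiv$ up to weak equivalence.) Since $(\mathcal{NF})^{(k)}$ has pointwise dimension at most $k$, it then suffices to convert this semifiltration into a weakly equivalent bifiltration with only a logarithmic increase in size.

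For the conversion, I would invoke \cref{Semi_Filtration_to_Bifilration}: extending Kerber and Schreiber's tower-to-filtration construction \cite{kerber2019barcodes} to the two-parameter setting, any simplicial semifiltration $\mathcal G\colon\N^{\op}\times\N\to\Simp$ of bounded pointwise dimension admits a weakly equivalent bifiltration $\widetilde{\mathcal G}$ whose size exceeds that of $\mathcal G$ by a factor logarithmic in the number of distinct states of $\mathcal G$ along the parameter direction whose structure maps need not be inclusions. Applied to $\mathcal G=(\mathcal{NF})^{(k)}$, this produces the desired bifiltered model of the $k$-skeleton of $\subdiv$. It remains to do the size bookkeeping: the non-inclusion maps of $\mathcal{NF}$ occur only in the $\N$-direction (the filtration direction of $\mathcal F$), and the value of $\mathcal{NF}$ there changes only when the set of maximal simplices of $\mathcal F_t$ changes; each of the $m_0$ simplices that is maximal in some $\mathcal F_t$ enters and leaves this set a bounded number of times, so there are $O(m_0)$ such changes. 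Hence the conversion costs a factor $O(\log m_0)$, and starting from input size $O(m_k)$ we obtain a bifiltration $\widetilde{(\mathcal{NF})^{(k)}}$ of size $O(m_k\log m_0)$, which by the conversion's weak-equivalence-preservation is a model of the $k$-skeleton of $\subdiv$.

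I expect the main obstacle to be the first step — verifying that passing to $k$-skeleta is compatible with the weak equivalence underlying \cref{thm:main-theorem-intro}\,(i). Taking skeleta does not in general preserve weak equivalence, so one must use the explicit combinatorial form of the nerve-theorem equivalence (via the blowup complex) together with the fact that the cover is by \emph{maximal} simplices, for which $(k{+}1)$-fold intersections of cover elements are exactly the faces of the $k$-skeleton of the subdivision. The remaining steps — checking that $(\mathcal{NF})^{(k)}$ meets the hypotheses of \cref{Semi_Filtration_to_Bifilration} and tracking the logarithmic factor so that it lands on $\log m_0$ rather than, say, $\log m_k$ — are comparatively routine.
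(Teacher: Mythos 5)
Your overall strategy is the same as the paper's: take the $k$-skeleton of $\mathcal{NF}$, which has bounded pointwise dimension ($\leq k$) and size $O(m_k)$ by \cref{thm:main-theorem-intro}\,(i), and run it through the semifiltration-to-bifiltration conversion of \cref{Semi_Filtration_to_Bifilration}. The size bookkeeping is also right: the conversion multiplies size by $O(\Delta\log|\mathcal A^0|)$, and here $\Delta\leq k$ is fixed while the $0$-skeleton of $(\mathcal{NF})^{(k)}$ equals the $0$-skeleton of $\mathcal{NF}$, which has size $O(m_0)$, giving the stated $O(m_k\log m_0)$ bound. (The paper does not give an explicit proof — the corollary appears in the introduction as a consequence of \cref{thm:main-theorem-intro}\,(i) together with the appendix — but this is clearly the intended derivation.)

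Where you go wrong is in the step you yourself flag as the main obstacle, and your proposed workaround does not hold up. You claim that the functorial nerve theorem, applied to the "subsystem of $(k{+}1)$-fold intersections," identifies $(\mathcal{NF})^{(k)}$ with $(\subdiv)^{(k)}$ up to weak equivalence. This is not true: the $k$-skeleton of a nerve is not the nerve of a cover of the $k$-skeleton of the underlying space, and there is no version of the nerve theorem relating the two. In fact $(\mathcal{NF})^{(k)}\not\simeq(\subdiv)^{(k)}$ in general. A counterexample: let $\mathcal F$ be the constant filtration with value an $n$-simplex $\Delta^n$, $n>k$. Then every $\mathcal F_t$ has a single maximal simplex, so $\mathcal{NF}$ (and hence $(\mathcal{NF})^{(k)}$) is a single vertex at every index where it is nonempty. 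But $(\subdiv)^{(k)}_{1,t}$ is the $k$-skeleton of the barycentric subdivision of $\Delta^n$, which has nontrivial $H_k$ when $k<n$. Also note that your parenthetical assertion that ``$(k{+}1)$-fold intersections of cover elements are exactly the faces of the $k$-skeleton of the subdivision'' is mistaken: those intersections are subcomplexes of the form $\mathcal S(\Delta)_j$, not individual simplices.

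The correct reading of the corollary is the usual TDA convention: a "model of the $k$-skeleton" here means a bifiltration $\widetilde{(\mathcal{NF})^{(k)}}$ weakly equivalent to $(\mathcal{NF})^{(k)}$, from which the persistent homology of $\subdiv$ in degrees $<k$ can be recovered (since $\mathcal{NF}\simeq\subdiv$ and homology in degree $i<k$ depends only on the $k$-skeleton). Read literally — as weak equivalence with $(\subdiv)^{(k)}$ — the statement would be false for the reason above, so no argument along the lines you sketch can succeed. Replace the attempted nerve-theorem workaround with the observation that $H_i((\mathcal{NF})^{(k)})\cong H_i(\mathcal{NF})\cong H_i(\subdiv)$ for all $i<k$, and the rest of your proposal is sound.
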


The fixed-dimensional skeleta of all of our constructions can be computed in time polynomial in the total size of the input and output.  We briefly explain this in \cref{Sec:Computation}, deferring a more thorough study of computation to future work.

\subsection*{Outline}
\cref{sec:Background} covers preliminaries, including filtrations, persistence modules, homotopy interleavings, and the persistent nerve theorem.  \cref{sec:nerve-model} introduces our nerve-based model of the subdivision bifiltration and uses this to prove \cref{thm:main-theorem-intro}.  \cref{sec:intrinsic-cech} gives the proof of \cref{cor:intro-3}, while \cref{sec:taxicab} gives the proofs of \cref{cor:intro-plane,cor:intro-2}.  \cref{Sec:Computation} discusses the polynomial-time computation of our constructions.  \cref{Semi_Filtration_to_Bifilration} explains how the construction of Kerber and Schreiber \cite{kerber2019barcodes} extends to convert a semifiltration to a bifiltration.

\section{Preliminaries}
\label{sec:Background}

We assume familiarity with basic algebraic topology, graph theory, and category theory, as introduced, e.g., in \cite{hatcherAlgebraicTopology2002,bondyGraph08,riehl2017category}.  
For additional background on TDA and persistent homology, see, e.g., \cite{edelsbrunnerComputationalTopology2022,deyComputational22,carlsson2021topological,carlsson2009topology,boissonnat2018geometric,oudotPersistenceTheory2015}.

Let $P$ be a poset, which we regard as a category in the usual way, i.e. the object set is $P$ and morphisms are pairs $p\leq q$.  For any category $\mathbf C$, functor $F:P\to \mathbf C$, and $p\leq q$ in $P$, we write $F(p)$ as $F_p$ and $F(p\leq q)\colon F(p)\to F(q)$ as $F_{p \subto q}$.  We call the morphisms $F_{p \subto q}$ \emph{structure maps}.
The functors $P\to \mathbf C$ form a category $\mathbf C^P$, whose morphisms are the natural transformations.  

Given posets $P_1,\ldots,P_l$, we regard the Cartesian product $P_1\times \cdots\times P_n$ as a poset, where $(p_1,\dots p_n)\leq (q_1,\ldots,q_n)$ if and only if $p_i \leq q_i$ for each $i$.  

\subsection{Filtrations}
\label{sec:Filtrations}
A \emph{($P$-)filtration} is a functor $\mathcal F:\cat{P}\to \Top$ such that the map $\mathcal F_{p\subto q}$ is a subspace inclusion for all $p \leq q$ in $P$. If $\mathcal F$ is $\Simp$-valued, we call $\mathcal F$ a \emph{simplicial} filtration.  When $P$ is a totally ordered set, we call $\mathcal F$ a \emph{1-parameter} filtration.  If $P=T_1\times T_2$ for totally ordered sets $T_1$ and $T_2$, we call $\mathcal F:\cat{P}\to \Top$  a \emph{bifiltration}.

We will also consider functors $\mathcal F:\cat{P}\to \Top$ that are not filtrations, particularly those of the following form:  For $P=T_1\times T_2$ with each $T_i$ totally ordered, we call $\mathcal F: \cat{P}\to \Top$ a \emph{semifiltration} if for each fixed $t \in T_2$, the restriction $\mathcal F_{-, t}: T_1\times \{t\} \to \Top$ is a filtration.

\subsubsection{Clique and \v Cech Filtrations}\label{Sec:Rips_Cech}
A \emph{graph} is a simplicial complex of dimension at most one.  A \emph{graph filtration}  is a graph-valued filtration $\mathcal G\colon P\to \Simp$.  Any such graph filtration  induces a simplicial filtration $\overline{\mathcal G}:P \to \Simp$, called the \emph{clique filtration} of $\mathcal G$, where each $\overline{\mathcal G}_p$ is the \emph{clique complex} of $\mathcal G_p$, i.e., $\overline{\mathcal G}_p$ is the largest simplicial complex with 1-skeleton $\mathcal G_p$.  For a key example, let $X$ be a metric space with metric $d_X$ and let $r \geq 0$.   The \emph{$r$-neighborhood graph} $\mathcal G(X)_r$ is the graph with vertex set $X$ and an edge $[x,y]$ if and only $d_X(x,y) \leq 2r$.  The graphs $\mathcal G (X)_r$ assemble into a graph filtration $\mathcal G (X):\T\to \Simp$. We call $\rips(X)\coloneqq \overline{\mathcal G (X)}$ the \emph{(Vietoris)-Rips filtration of $X$}.

\begin{definition}\label{Def:Nerve}
For a collection of sets $U$, the \emph{nerve of $U$} is the simplicial complex
\[
\Ner(U)\coloneqq \{ S \subset U \mid S \textup{ is finite and }\bigcap_{\sigma \in S} \sigma \neq \emptyset\}.
\]
\end{definition}
Given a metric space $Y$, a point $y\in Y$, and $r \in\T$, let \[B(y)_r=\{z\in Y\mid d_Y(z,y)\leq r\}.\]
For a metric space $X$ equipped with an embedding into a metric space $Y$, let
 \[\cech(X)_r=\Ner(\{B(x)_r \mid x \in X\}),\]
 where each $B(x)_r$ is understood to be a metric ball in $Y$.  Allowing $r$ to vary, we obtain the \emph{\v Cech filtration} \[\cech(X)\colon \T\to \Simp.\]  In TDA, one most often considers \v Cech filtrations where $Y=\R^n$.  However, in this paper we will primarily consider the setting $Y=X$, in which case we call $\cech(X)$ the  \emph{intrinsic \v Cech filtration} of $X$ and denote it as $\mathcal I(X)$.  The stability of  $\mathcal I(X)$ has been studied in \cite[Section 4.2.2]{chazal2014persistence}.
\begin{example}
To illustrate the difference between the intrinsic \v Cech filtration and the usual (extrinsic) construction, let 
\[
X= \{(0,0), (4,0), (2,2\sqrt 3)\} \subset \R^2=Y,
\]
so that $X$ is the vertex set of an equilateral triangle with side length $4$. Write $a,b,c$ for the vertices of $X$, respectively.  Then
   \begin{align*}
    \cech(X)_r &= \begin{cases}
        \{[a],[b],[c]\}, & 0 \leq r < 2, \\
        \{[a],[b],[c],[ab],[ac],[bc]\}, & 2 \leq r < \frac{4\sqrt 3}{3}, \\
        \{[a],[b],[c],[ab],[ac],[bc], [abc]\}, & \frac{4\sqrt 3}{3} \leq r,
    \end{cases}\\
    \icech(X)_r &= \begin{cases}
        \{[a],[b],[c]\}, & 0 \leq r < 4, \\
        \{[a],[b],[c],[ab],[ac],[bc], [abc]\}, & 4 \leq r.
    \end{cases}
\end{align*}
\end{example}

\subsubsection{Weak Equivalence}\label{Sec:Weak_Equiv}
We frame our main results using \emph{weak equivalence}, a standard notion of homotopical equivalence of diagrams of spaces that has appeared in several TDA works \cite{alonsoDelaunayBifiltrations2023,blumberg2023universality,buchetSparseHigher2023,blumbergStability2Parameter2022,lanari2023rectification,corbetComputingMulticover2023}.  
For functors $\mathcal F, \mathcal F': \cat{P}\to \Top$, we say that a natural transformation $\eta:\mathcal F\to \mathcal F'$ is an \emph{objectwise homotopy equivalence} if for each $p\in P$, the component map $\eta_p:\mathcal F_p\to \mathcal F'_p$ is a homotopy equivalence.  If such an $\eta$ exists, we write \[\mathcal F\xlongrightarrow{\simeq}\mathcal F'.\]  Moreover, we say that $\mathcal F$ and $\mathcal F'$ are \emph{weakly equivalent}, and write $\mathcal F\simeq \mathcal F'$, if they are connected by a zigzag of objectwise homotopy equivalences, as follows:
\[
\begin{tikzcd}[ampersand replacement=\&,column sep=2ex,row sep=2ex]
   \& \mathcal W_1\ar["\simeq",swap]{dl}\ar["\simeq"]{dr}  \&           \& \cdots\ar["\simeq",swap]{dl}\ar["\simeq"]{dr}  \&                \&   \mathcal W_n \ar["\simeq",swap]{dl}\ar["\simeq"]{dr}  \\
\mathcal F \&                                                             \&  \mathcal W_2 \&                                                                                                                       \& \mathcal W_{n-1} \&                                                               \&\mathcal F'.
\end{tikzcd}
\]

\subsection{Persistence Modules}
\label{sec:Persistence}

Let $\Vec$ denote the category of vector spaces over a fixed field $\mathbb K$.  A \emph{($P$-)persistence module} is a functor $M:\cat P \to \Vec$.   If $v \in M_p$, then we call $p$ the \emph{grade} of $v$, and write $p=\gr v$. 
 
A \emph{generating set} of $M$ is a set $S \subset \bigsqcup_{p\in P}M_p$ such that for any $v \in \bigsqcup_{p\in P }M_p$ we have \[ v = \sum_{i=1}^k c_i M_{\gr{v_i}\subto \gr{v}}(v_i)\] for some vectors $v_1, v_2, \dots, v_k \in S$ and scalars $c_1, c_2, \dots, c_k \in \mathbb K$.   We say $S$ is \emph{minimal} if for all $v\in S$, the set $S\setminus \{v\}$ does not generate $M$.  An elementary linear algebra argument shows that if $S$ and $S'$ are minimal generating sets of $M$, then $|S|=|S|'$.  We say  $M$ is \emph{finitely generated} if a finite generating set of $M$ exists.  Clearly, if $M$ is finitely generated, then a minimal generating set of $M$ exists.

\subsubsection{Presentations and Resolutions}\label{Sec:Free_Modules}
For $p\in P$, let $\mathbb K^{\langle p\rangle}\colon P\to \Vec$ be given by 
\begin{align*}
\mathbb K^{\langle p\rangle}_x &=
\begin{cases}
\mathbb K &\text{if } p \leq x, \\
0 &\text{otherwise,}
\end{cases}
& \mathbb K^{\langle p\rangle}_{x\subto y}=
\begin{cases}
\Id_{\mathbb K} &{\text{if } p\leq x},\\
0 &{\text{otherwise}}.
\end{cases}
\end{align*}
A persistence module $F\colon P\to \Vec$ is \emph{free} if there exists a multiset $\mathcal B$ of elements in $P$ such that $F\cong \bigoplus_{p\in \mathcal B}\, \mathbb K^{\langle p\rangle}$.

A \emph{presentation} of $M\colon P\to \Vec$ is a morphism of free modules $\partial:F_1\to F_0$ such that $M\cong \mathrm{coker}(\partial)$.  $M$ is \emph{finitely presented} if there exists a presentation of $M$ with $F$ and $F'$ finitely generated.  
A \emph{free resolution} $F$ of $M$ is an exact sequence of free $P$-persistence modules \[\cdots \xrightarrow{\partial_3}  F_{2}\xrightarrow{\partial_2} F_1\xrightarrow{\partial_1} F_0\] such that $M\cong \mathrm{coker}(\partial_1)$.
$F$ is \emph{minimal} if any resolution of $M$ has a direct summand isomorphic to $F$.  

Now consider a finitely presented module $M\colon P\to \Vec$, where $P=T_1\times \cdots \times T_n$ for totally ordered sets $T_1,\ldots,T_n$.  Standard arguments show that there exists a minimal resolution $F$ of $M$ with each $F_i$ finitely generated, and that $F$ is unique up to isomorphism; see  \cite[Sections 1.7 and 1.9]{peeva2011graded}. We call the cardinality of a minimal set of generators for $F_i$ the \emph{$i^{\mathrm{th}}$ total Betti number of $M$}, and denote it as $\beta_i(M)$.

\subsection{Chain Complexes and Homology of Filtrations}\label{Sec:Chain_Complexes}
Let $H_i\colon \Top \to \Vec$ denote the $i^{\mathrm{th}}$ homology functor with coefficients in $\mathbb K$.  Given a poset $P$ and functor $\mathcal F\colon \cat{P} \to \Top$, we obtain a persistence module $\Ho{i}\mathcal F \colon \cat{P}\to \Vec$ by composition.  If $\mathcal F\simeq \mathcal F'$, then $\Ho{i}\mathcal F\cong \Ho{i} \mathcal F'$. In fact, $\Ho{i}{\mathcal F}$ is the homology of a chain complex of $P$-persistence modules
\[
C \mathcal F= \cdots \xrightarrow{\partial_{3}} \Ch{2}\mathcal F \xrightarrow{\partial_2} \Ch{1}\mathcal F\xrightarrow{\partial_1} \Ch{0}\mathcal F,
\]
where $(\Ch{i}\mathcal F)_p$ is the usual simplicial chain vector space $\Ch{i}{(\mathcal F_p)}$ with coefficients in $\mathbb K$, and each structure map $(\Ch{i}{\mathcal F})_{p\subto q}$ is the map induced by $\mathcal F_{p\subto q}$.  Note that \[\Ch{i}\mathcal F\cong \bigoplus_{\substack{\sigma\in \colim \mathcal F\\\dim(\sigma)=i}} \mathbb K^{\sigma},\] where \begin{align*}
\mathbb K^\sigma_p &=
\begin{cases}
\mathbb K &\text{if }  \sigma\in \mathcal F_p, \\
0 &\text{otherwise,}
\end{cases}
& \mathbb K^\sigma_{p\subto q}=
\begin{cases}
\Id_{\mathbb K} &{\text{if }  \sigma\in \mathcal F_p},\\
0 &{\text{otherwise}}.
\end{cases}
\end{align*}

A filtration $\mathcal F:P\to \Simp$ is called \emph{$1$-critical} if each $\Ch{i}{\mathcal F}$ is free.  
Note that $\mathcal F$ is $1$-critical if and only each simplex $\sigma\in \colim \mathcal F$ has a unique minimal index of appearance $\birth(\sigma)\in P$.    For example, Rips and Cech filtrations are both 1-critical.  

We will say a functor $\mathcal F\colon P\to \Simp$ is \emph{finitely presented} if each $\Ch{i}{\mathcal F}$ is finitely presented.  If $\mathcal F$ is a 1-critical filtration with finite colimit, then $\mathcal F$ is finitely presented.  Conversely, for $T$ a totally ordered set, a finitely presented filtration $\mathcal F\colon T\to \Simp$ is 1-critical and has finite colimit.

\subsubsection{Size of Simplicial Functors}\label{Sec:Size}

\begin{definition}\label{Def_Size_Filtration}
Let $P=T_1\times \cdots \times T_n$ for totally ordered sets $T_1,\ldots, T_n$.  We define the \emph{size} of a finitely presented functor $\mathcal F\colon P\to \Simp$ to be 
\begin{equation}\label{eq:Main_Size_Def}
\beta_1(\Ch{0}{\mathcal F})+\sum_{i=0}^\infty \beta_0(\Ch{i}{\mathcal F}).
\end{equation}
\end{definition}

\begin{remark}
We now give some context and motivation for \cref{Def_Size_Filtration}.  Outside of the familiar case of 1-critical filtrations, we are aware of only two TDA papers that quantify the size of a $\Simp$-valued functor: First, \cite[Section ~$1.2$]{lesnickInteractiveVisualization2015} defines the size of a finitely presented bifiltration $\mathcal F$ as \[\sum_{i=1}^\infty \beta_0(\Ch{i}{\mathcal F}),\]  which differs from  \cref{eq:Main_Size_Def} by at most a factor of $2$.  Second, for $T$ a totally ordered set, the main results of \cite{kerber2019barcodes} are formulated using several measures of the size of a functor $\mathcal F\colon T\to \Simp$; see \cref{Semi_Filtration_to_Bifilration}.

Our motivation for \cref{Def_Size_Filtration} is the following: Assuming that the simplicial complexes of $\mathcal F$ have bounded dimension and that storing an element of $\mathbb K$ requires constant memory, \eqref{eq:Main_Size_Def} is an asymptotic upper bound on the amount of memory needed to store $\mathcal F$.  To explain, the 0-skeleton of $\mathcal F$ can be encoded up to isomorphism using  \[O(\beta_0(\Ch{0}{\mathcal F})+\beta_1(\Ch{0}{\mathcal F}))\] memory, e.g., as a minimal presentation of $\Ch{0}{\mathcal F}$.  Since the structure maps of $\mathcal F$ are simplicial, they are determined by the 0-skeleton of $\mathcal F$.  Therefore, to store the rest of $\mathcal F$, we need only to record all births of higher-dimensional simplices in the filtration, which requires \[O\left(\sum_{i=1}^\infty \beta_0(\Ch{i}{\mathcal F})\right)\] memory.

For practical computations of homology, it is natural to begin with explicit presentations of the chain modules. Thus, one might prefer to instead define the size of $\mathcal F$ to be 
\begin{equation}\label{eq:Other_Size_Def}
\sum_{i=1}^\infty  \beta_1(\Ch{i}{\mathcal F})+\sum_{i=1}^\infty \beta_0(\Ch{i}{\mathcal F}).
\end{equation}
However, we are primarily interested in the case where $\mathcal F$ is a semifiltration; in this case, \cref{Prop:Betti_Numbers_Semi_Filtrations} below gives that  $\beta_1(\Ch{i}{\mathcal F})\leq \beta_0(\Ch{i}{\mathcal F})$ for all $i\geq 0$, implying that the measures of size \eqref{eq:Main_Size_Def} and \eqref{eq:Other_Size_Def} differ by at most a factor of $2$. 
\end{remark}

\subsection{Subdivision Bifiltrations}
\label{sec:SRips}

A sequence of nested simplices \[\sigma_1 \subset \sigma_2 \subset \cdots \subset \sigma_m\] of an abstract simplicial complex $W$ is called a \emph{flag} of $W$. The set $\Bary{W}$ of all flags of $W$ is itself a simplicial complex, called the \emph{barycentric subdivision} of $W$.  
 
 \begin{definition}
For each $k \in \N\coloneqq \{1,2,\ldots\}$, define $\mathcal S(W)_k$ to be the subcomplex of $\Bary W$ spanned by all flags whose minimum element has dimension at least $k-1$.  Then in particular, $\mathcal S(W)_1 = \Bary W$.  Varying $k$ yields a filtration
\[
\mathcal S(W)\colon \mathbb N^{\mathrm{op}}\to \Simp;
\]
 see \Cref{fig:subdiv-2spx}.  
\begin{figure}[t]
\centering
\includegraphics[width=0.75\textwidth]{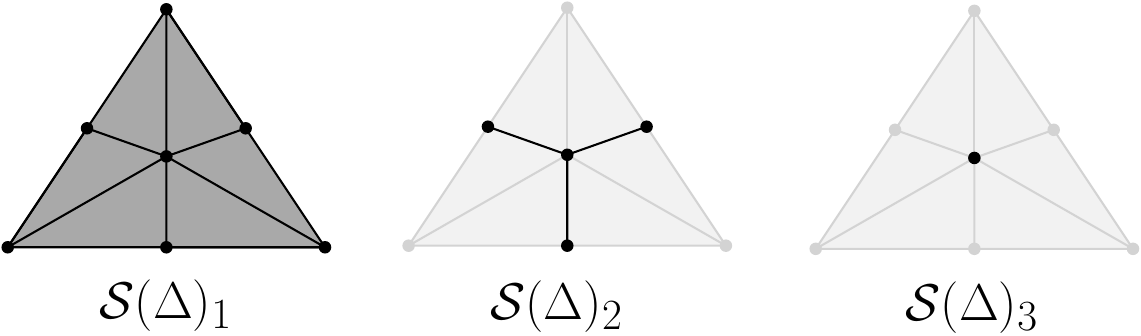}
\caption{The subdivision filtration of a $2$-simplex $\Delta$.}
\label{fig:subdiv-2spx}
\end{figure}
For any poset $P$ and filtration ${\mathcal F:\cat{P}\to \Simp}$, the family of filtrations $(\mathcal S(\mathcal F_p))_{p\in P}$ assembles into a filtration
\[
\subdiv \colon \mathbb N^{\mathrm{op}}\times P \to \Simp,
\]
which we call the \emph{subdivision filtration of $\mathcal F$} \cite{sheehyMulticoverNerve2012a}.  In this paper, we are primarily interested in the case $P=[0,\infty)$.  For $X$ a metric space, we call $\srips(X)$ and $\mathcal S\icech(X)$ the \emph{subdivision-Rips} and \emph{intrinsic subdivision-\v Cech bifiltrations of $X$}, respectively. 
\end{definition}

Note that if $\mathcal F$ is 1-critical, then so is $\subdiv$, and that if $\mathcal F$ is finitely presented, then so is $\subdiv$.

\subsection{Homotopy Interleavings}
\label{sec:Interleavings}

A category is said to be \emph{thin} if for any objects $x$ and $y$, there is at most one morphism from $x$ to $y$.  For $\eps \geq 0$, let  $I^{1+\eps}$ be the thin category with object set $\T\times \{0,1\}$ and a morphism $(r,i)\to (s,j)$ if and only if either
\begin{itemize}
    \item [(i)] $r(1+\eps) \leq s$, or
    \item [(ii)] $i=j$ and $r \leq s$.
\end{itemize}

We then have functors $E^0, E^1:\T\to I^{1+\eps}$ mapping $r \in\T$ to $(r,0)$ and $(r,1)$, respectively.

\begin{definition}
    For any category $\mathbf C$ and functors $\mathcal F, \mathcal F':\T\to \mathbf C$, a (multiplicative) $(1+\eps)$-interleaving between $\mathcal F$ and $\mathcal F'$ is a functor 
    \[
    \mathcal{Z}: I^{1+\eps} \to \mathbf C
    \]
    such that 
    $  \mathcal{Z}\circ E^0 = \mathcal F$ and $\mathcal{Z}\circ E^1= \mathcal F'$.
If such a $\mathcal Z$ exists, we say $\mathcal F$ and $\mathcal F'$ are \emph{$(1+\epsilon)$-interleaved}.  
\end{definition}

We now extend this definition to the $2$-parameter setting: For $\eps \geq 0$, let the $I^{(1,1+\eps)}$, be the thin category with object set $\Poset\times \{0, 1\}$ and a morphism $(k,r,i)\to (l,s,j)$ if and only if either
\begin{itemize}
    \item [(i)] $(k,r(1+\eps)) \leq (l,s)$, or 
    \item [(ii)] $i=j$ and $(k,r) \leq (l,s)$.
\end{itemize}
As above, we have functors $E^0, E^1: \Poset \to I^{(1,1+\eps)}$ sending $(k,r) \in \Poset$ to $(k,r,0)$ and $(k,r,1)$, respectively.

\begin{definition}\label{def:2d-interleaving}
    For functors $\mathcal F, \mathcal F': \Poset \to \mathbf C$, a $(1+\eps)$-interleaving between $\mathcal F$ and $\mathcal F'$ is a functor 
    \[
      \mathcal{Z}: I^{(1,1+\eps)} \to \mathbf C
    \]
    such that $\mathcal{Z}\circ E^0 = \mathcal F$ and $  \mathcal{Z}\circ E^1 = \mathcal F'$.
\end{definition}

\begin{remark}
\cref{def:2d-interleaving} is essentially the same as the definition of 2-parameter interleaving used in \cite{buchetSparseHigher2023} to formulate sparse approximation results for the multicover bifiltration.  It is somewhat different than the definition of interleavings used in \cite{blumbergStability2Parameter2022} to formulate \cref{Thm:Robustness}; the definition of \cite{blumbergStability2Parameter2022} also allows for shifts in the first coordinate, and the shifts in each coordinate are additive rather than multiplicative.  
\end{remark}

\begin{definition}[\cite{blumberg2023universality}]
    For  functors $\mathcal F, \mathcal G: \Poset \to \Top$, we say that $\mathcal F$ and $\mathcal G$ are \emph{$(1+\eps)$-homotopy interleaved} if there exist $(1+\eps)$-interleaved  functors $\mathcal F^\prime, \mathcal G^\prime$ such that $\mathcal F \simeq \mathcal F^\prime$ and $\mathcal G\simeq \mathcal G^\prime$.
\end{definition}

If $\mathcal F$ and $\mathcal G$ are $(1+\eps)$-homotopy interleaved, we say that  $\mathcal G$ is a \emph{$(1+\eps)$-approximation} to $\mathcal F$.

\begin{proposition}\label{Prop:Inerleaving_and_Subdivision}
For any $\epsilon\geq 0$, if filtrations $\mathcal F,\mathcal F'\colon \T\to \Simp$ are $(1+\eps)$-interleaved, then so are $\mathcal{SF}$ and $\mathcal{SF'}$.
\end{proposition}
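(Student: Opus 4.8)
The plan is to leverage the functoriality of the subdivision construction $\mathcal S(-)$, which is really the main point. A $(1+\eps)$-interleaving between $\mathcal F$ and $\mathcal F'$ is, by definition, a single functor $\mathcal Z\colon I^{1+\eps}\to \Simp$ restricting to $\mathcal F$ and $\mathcal F'$ along $E^0$ and $E^1$. First I would check that the subdivision filtration construction $\mathcal S(-)$ is itself functorial in the simplicial complex: a simplicial map $f\colon W\to W'$ (in our setting an inclusion, since filtrations have inclusions as structure maps) sends a flag $\sigma_1\subset\cdots\subset\sigma_m$ of $W$ to the flag obtained from $f(\sigma_1)\subseteq\cdots\subseteq f(\sigma_m)$ in $W'$ after deleting repeats, and this respects the dimension-of-minimum-element grading in the appropriate way when $f$ is injective. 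Hence applying $\mathcal S(-)$ objectwise to $\mathcal Z$ and assembling over $\N^{\op}$ yields a functor $\mathcal N^{\op}\times I^{1+\eps}\to\Simp$, i.e., a functor $I^{(1,1+\eps)}\to\Simp$ after identifying the poset $\Poset$-indexed data correctly — here I would use that $\N^{\op}\times T$ indexes $\subdiv$ and that the subdivision construction commutes with the objectwise application.

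More precisely, the key step is to observe that $I^{(1,1+\eps)}$ is (isomorphic to) the product-type indexing category $\N^{\op}\times I^{1+\eps}$ in the relevant sense: an object $(k,r,i)$ corresponds to $(k,(r,i))$, and the two clauses defining morphisms in $I^{(1,1+\eps)}$ are exactly the pairing of $\leq$ in $\N^{\op}$ with the two clauses defining morphisms in $I^{1+\eps}$. Given this, I would define $\mathcal Z'\colon I^{(1,1+\eps)}\to\Simp$ by $\mathcal Z'(k,r,i)=\mathcal S(\mathcal Z(r,i))_k$, with structure maps induced by those of $\mathcal Z$ (in the $(r,i)$ direction) and by the inclusions $\mathcal S(W)_{k+1}\hookrightarrow \mathcal S(W)_k$ (in the $k$ direction); functoriality of $\mathcal S(-)$ on morphisms and the compatibility of these two families of maps (a naturality square) make $\mathcal Z'$ a well-defined functor. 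Then $\mathcal Z'\circ E^0 = \subdiv$ and $\mathcal Z'\circ E^1 = \mathcal{SF}'$ by construction, since $\mathcal Z\circ E^0=\mathcal F$ implies $\mathcal S(\mathcal Z(r,0))_k = \mathcal S(\mathcal F_r)_k = (\subdiv)_{k,r}$, and similarly for $E^1$. This exhibits $\mathcal Z'$ as a $(1+\eps)$-interleaving between $\mathcal{SF}$ and $\mathcal{SF}'$.

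The main obstacle — really the only nontrivial point — is verifying that $\mathcal S(-)$ is a functor on simplicial maps that are inclusions, and in particular that it sends inclusions to inclusions and that the filtration grading by $\min\dim$ is preserved, so that the two directions of structure maps genuinely commute. One has to be slightly careful that an inclusion $W\hookrightarrow W'$ induces a map $\mathcal S(W)_k \to \mathcal S(W')_k$ and not merely $\mathcal S(W)_k\to \Bary{W'}$: since an inclusion sends a flag with minimum element of dimension $\geq k-1$ to a flag with the same minimum element (no collapsing occurs), this is immediate, but it should be stated. Once this naturality is in hand, the assembly of $\mathcal Z'$ is formal, and no estimates or homotopy-theoretic input are needed — this is a purely categorical argument, unlike the homotopy-interleaving statements elsewhere in the paper. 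I would also remark that the same argument shows, more generally, that $(1+\eps)$-interleaved $P$-filtrations have $(1+\eps)$-interleaved subdivision filtrations for the evident notion of interleaving on $\N^{\op}\times P$, though we only need the stated case.
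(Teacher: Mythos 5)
Your overall approach — apply $\mathcal S(-)$ objectwise to the interleaving functor $\mathcal Z\colon I^{1+\eps}\to\Simp$ and reassemble into a functor on $I^{(1,1+\eps)}\cong\N^{\op}\times I^{1+\eps}$ — is the same one the paper uses, and your reduction of the problem to functoriality of $\mathcal S(-)$ on injective simplicial maps is correct. But there is a real slip in how you justify that the structure maps of $\mathcal Z$ lie in that subcategory. You write ``(in our setting an inclusion, since filtrations have inclusions as structure maps),'' but $\mathcal Z$ is an interleaving, not a filtration. Its structure maps along each copy of $\T$ are inclusions (since $\mathcal Z\circ E^0 = \mathcal F$ and $\mathcal Z\circ E^1=\mathcal F'$), but the cross maps $\mathcal F_r\to\mathcal F'_{r(1+\eps)}$ and $\mathcal F'_r\to\mathcal F_{r(1+\eps)}$ need not be subspace inclusions — they are arbitrary simplicial maps subject only to the commutativity forced by thinness of $I^{1+\eps}$. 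So the premise you use to conclude injectivity is false as stated, and the injectivity you need is not free.

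The property that does hold, and that needs to be argued, is that the cross maps are monomorphisms (injective on vertices): composing $\mathcal F_r\to\mathcal F'_{r(1+\eps)}\to\mathcal F_{r(1+\eps)^2}$ gives the structure map $\mathcal F_{r\subto r(1+\eps)^2}$, which is an inclusion, so the first factor is injective. With that in hand your argument goes through, because your functoriality claim for $\mathcal S(-)$ only requires injectivity, not literal inclusion. The paper does exactly this check, and then takes a slightly tidier route: instead of extending the definition of $\mathcal S(-)$ to act on non-inclusion monomorphisms, it replaces $\mathcal Z$ by a naturally isomorphic functor $\mathcal Z'$ whose structure maps really are inclusions (by taking images inside $\colim\mathcal Z$), so the subdivision construction as already defined applies verbatim, yielding $\mathcal{SZ}'$ as the interleaving. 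Both versions work; yours just needs the monomorphism observation made explicit and the word ``inclusion'' replaced by ``monomorphism'' where the cross maps are concerned.
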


\begin{proof}
For $\eps=0$, the result holds because isomorphic filtrations have isomorphic subdivision filtrations.  For the case $\eps>0$, consider a $(1+\eps)$-interleaving $\mathcal Z$ between $\mathcal F$ and $\mathcal F'$.  Note that since the structure maps of $\mathcal F$ and $\mathcal F'$ are inclusions, the structure maps of $\mathcal Z$ are monomorphisms  (i.e., injections on vertex sets).  Hence, $\mathcal Z$ is naturally isomorphic to a filtration $\mathcal Z'$, i.e., a functor whose structure maps are inclusions; concretely, we can take each $\mathcal Z'_r$ to be the image of $\mathcal Z_p$ under the cocone map $\mathcal Z_r\hookrightarrow \colim \mathcal Z$.  The functor $\mathcal{SZ'}$ is a $(1+\eps)$-interleaving between filtrations isomorphic to $\mathcal{SF}$ and $\mathcal {SF'}$.  It follows that $\mathcal{SF}$ and $\mathcal {SF'}$ are themselves $(1+\eps)$-interleaved.
\end{proof}

\subsection{Functorial Nerves}\label{subsec:nerve-model}
Following \cite{bauerUnifiedView2023}, we give a functorial version of the nerve theorem for simplicial covers.  We begin with the requisite definitions.  

For $X$ a topological space, a \emph{cover of $X$} is a collection $U=(U_a)_{a\in A}$ of subspaces of $X$ indexed by a set $A$ such that $\bigcup_{a\in A} U_a=X$.  We say $U$ is \emph{good} if the intersection of finitely many elements of $U$ is always either empty or contractible.

Let $X$ and $Y$ be topological spaces, $U=(U_a)_{a\in A}$ a cover of $X$, and $V=(V_b)_{b \in B}$ a cover of $Y$.  We define a \emph{map of cover indices} $\varphi\colon U\to V$ to be a function $\varphi\colon A\to B$.  We say that $\varphi$ \emph{carries} a continuous map $f\colon X\to Y$ if for all $a \in A$ we have $f(U_a)\subset V_{\varphi(b)}$.

\begin{definition}
The objects of the \emph{category of covered spaces} $\cCov$ are pairs $(X,U)$ where $X$ is a topological space and $U$ is a cover of $X$. A morphism $(f, \varphi):(X,U)\to (Y,V)$ in $\cCov$ consists of a  continuous map $f\colon X\to Y$ and a map of cover indices $\varphi\colon U\to V$ such that $\varphi$ carries $f$.  
\end{definition}

The nerve construction (\cref{Def:Nerve}) induces a functor $\Ner\colon \cCov \to \Simp$.  We also have a forgetful functor $\mathcal Y\colon\cCov \to \Top$ that maps an object $(X,U)$ to $X$.  

Using these functors, we can extend the formalism of covers and nerves to diagrams of spaces, as follows:
\begin{definition}\label{def:functor-nerve}
    Let $\mathcal F\colon P \to \Top$ be a functor. 
    \begin{itemize}
    \item[(i)]
    A \emph{cover} of $\mathcal F$ is a functor $\mathcal U\colon P\to \cCov$ such that $\mathcal Y \circ \mathcal U = \mathcal F$. 
    \item[(ii)] Writing $\mathcal U_p=(X_p,U_p)$, we say that $\mathcal U$ is  \emph{good} if $U_p$ is good for each $p\in P$. 
    \item[(iii)] The \emph{nerve of $\mathcal U$} is the composite $\Ner{}\! \circ \mathcal U$, which we will denote as $\Ner_{\mathcal U}$. 
    \end{itemize}
\end{definition}

The following is the version of functorial nerve theorem we will use in this paper.  See \cite{bauerUnifiedView2023} for a lucid treatment of variants and generalizations.

\begin{theorem}[{\cite[Theorem 4.8]{bauerUnifiedView2023}}]\label{theo:functorial-nerve-theorem}
If $\mathcal U$ is a good simplicial cover of a functor $\mathcal F\colon P \to \Simp$, then $\Ner_{\mathcal U} \simeq \mathcal F$.
\end{theorem}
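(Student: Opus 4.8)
The plan is to deduce the functorial statement from the classical nerve theorem for a single good simplicial cover by interposing a \emph{functorial blowup complex}. The key structural observation is that weak equivalence of $P$-diagrams requires only a \emph{zigzag of objectwise homotopy equivalences}, so it suffices to (a) upgrade the Mayer--Vietoris blowup to a functor on $\cCov$ carrying two natural projections, one to the underlying space and one to the nerve, and (b) verify that these projections are objectwise homotopy equivalences whenever the cover at that index is good and simplicial. Since the hypothesis is exactly that each $\mathcal U_p$ is a good simplicial cover, once (a) and (b) are in place we simply precompose everything with $\mathcal U\colon P\to\cCov$ and read off the zigzag.

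For step (a), I would define the blowup functor $\Blowup\colon\cCov\to\Top$ on an object $(X,U)$ with $U=(U_a)_{a\in A}$ by
\[
\Blowup(X,U)\;=\;\bigcup_{\sigma}\ \Bigl(\,\bigcap_{a\in\sigma}U_a\Bigr)\times|\sigma|\ \subseteq\ X\times|\Ner(U)|,
\]
where $\sigma$ ranges over the simplices of $\Ner(U)$ (nonempty finite $\sigma\subseteq A$ with $\bigcap_{a\in\sigma}U_a\neq\emptyset$) and $|\sigma|\subseteq|\Ner(U)|$ is the corresponding closed geometric simplex. A morphism $(f,\varphi)\colon(X,U)\to(Y,V)$ induces a map of blowups: the carrying condition $f(U_a)\subseteq V_{\varphi(a)}$ gives $f\bigl(\bigcap_{a\in\sigma}U_a\bigr)\subseteq\bigcap_{b\in\varphi(\sigma)}V_b$, so $f\times|\varphi|$ restricts, and functoriality is then routine. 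Projecting onto the two factors yields natural transformations $p_1\colon\Blowup\Rightarrow\mathcal Y$ and $p_2\colon\Blowup\Rightarrow\Ner$ of functors $\cCov\to\Top$. The point of this construction is that $\Blowup(X,U)$ is a model for the homotopy colimit of the diagram $D\colon\sigma\mapsto\bigcap_{a\in\sigma}U_a$ of subcomplexes indexed by the (opposite of the) face poset of $\Ner(U)$; here $\operatorname{colim}D=\bigcup_a U_a=X$, while replacing every value of $D$ by a point recovers $|\Ner(U)|$ as the homotopy colimit of the terminal diagram.

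For step (b): $p_1$ is the canonical map $\operatorname{hocolim}D\to\operatorname{colim}D=X$, and it is a homotopy equivalence because $D$ consists of inclusions of subcomplexes — hence is (projectively / Reedy) cofibrant — which is the one place the \emph{simplicial} hypothesis enters; this is the ``Projection Lemma.'' The map $p_2$ is induced by the objectwise map of diagrams $D\Rightarrow\ast$, which is an objectwise homotopy equivalence precisely because each $\bigcap_{a\in\sigma}U_a$ is contractible — the \emph{good-cover} hypothesis — and homotopy colimits preserve objectwise equivalences, so $p_2$ is an equivalence. Applying these two facts at every $p\in P$ (using that each $\mathcal U_p$ is a good simplicial cover) and using naturality from (a), precomposition with $\mathcal U$ produces the zigzag of objectwise homotopy equivalences
\[
\mathcal F\ \xleftarrow{\ \simeq\ }\ \Blowup\circ\mathcal U\ \xrightarrow{\ \simeq\ }\ \Ner_{\mathcal U},
\]
whence $\Ner_{\mathcal U}\simeq\mathcal F$.

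The main obstacle is the Projection Lemma for $p_1$: for a general topological cover the map $\Blowup(X,U)\to X$ need not be a homotopy equivalence (one needs, e.g., an open numerable cover or a CW-cover), and the content of the ``simplicial'' hypothesis is exactly that a cover by subcomplexes is automatically of the required cofibrant type, so that collapsing the simplex factors $|\sigma|$ recovers $X$ up to homotopy. Making this rigorous — identifying the blowup with a homotopy colimit and invoking cofibrancy of subcomplex inclusions — is where essentially all the work lies; by contrast, the functoriality bookkeeping and the equivalence $p_2$ are comparatively formal. All of this is carried out in detail in \cite{bauerUnifiedView2023}, Theorem~4.8 and its supporting lemmas.
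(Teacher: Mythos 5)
The paper does not prove this theorem itself; it is cited verbatim from Bauer, Kerber, Roll, and Rolle (their Theorem~4.8), so there is no in-paper argument to compare against. Your sketch is a faithful outline of the blowup-complex strategy that the cited reference actually uses: upgrade the Mayer--Vietoris blowup to a functor on $\cCov$, obtain the two natural projections to $\mathcal Y$ and $\Ner$, and show each is an objectwise homotopy equivalence --- the first via a projection-lemma / cofibrancy argument that uses that the cover is by subcomplexes, the second via the good-cover hypothesis and the fact that homotopy colimits preserve objectwise equivalences. Precomposing with $\mathcal U$ then yields the zigzag. This is exactly the right architecture, and your identification of where the real work lies (the projection lemma for $p_1$, and the fact that simpliciality of the cover is what makes the diagram of intersections suitably cofibrant) is accurate. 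The only caveat worth recording is that your sketch silently identifies the order complex of the face poset of $\Ner(U)$ with $|\Ner(U)|$ --- that is the barycentric subdivision, which is fine up to homeomorphism, but a careful writeup should flag it; otherwise the argument is sound and matches the source.
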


\section{Size of Models of the Subdivision Bifiltration}
\label{sec:nerve-model}

In this section, we define our nerve-based model $\mathcal{NF}$ of a subdivision bifiltration $\mathcal{SF}$, and prove \cref{thm:main-theorem-intro}.  First, we recall the statement of the theorem, along with the requisite notation. 

\begin{repnotation}{Notation:m_k}
For $T$ a totally ordered set, $\mathcal F\colon T\to \Simp$ a filtration and $k\geq 0$, let $m_k=m_k(\mathcal F)$ denote the number of sets $S$ of simplices in  $\colim \mathcal F$ such that 
\begin{itemize}
\item $|S|\leq (k+1)$ and
\item  for some $t\in T$, each $\sigma\in S$ is a maximal simplex in $\mathcal F_t$.
\end{itemize}
\end{repnotation}

\begin{reptheorem}{thm:main-theorem-intro}\mbox{}
For any finitely presented filtration $\mathcal F\colon T\to \Simp$,
    \begin{itemize}
        \item [(i)] $\mathcal{NF}$ is a simplicial model of $\subdiv$ whose $k$-skeleton has size $O(m_k)$,
        \item [(ii)] the $0$-skeleton of any simplicial model of $\subdiv$ has size at least $m_0$.
    \end{itemize}
\end{reptheorem}

\subsection{Nerve Model Construction}\label{Sec:Nerve_Model_Subdiv}
Given a poset $P$ and filtration $\mathcal F\colon P\to \Simp$, we now define $\mathcal{NF}\colon \mathbb N^{\op}\times P\to \Simp$.  As for subdivision filtrations, we are primarily interested in the case $P=[0,\infty)$, but it is no more difficult to give the construction for general $P$.  

We first choose a well-ordering of the vertices of $\colim \mathcal F$.  Observe that the resulting lexicographic ordering of simplices in $\colim \mathcal F$ is also a well-ordering.  Let $\Sigma$ denote the set of simplices in $\colim \mathcal F$  that are maximal in $\mathcal F_t$ for some $t\in P$, and let $\Sigma_t\subset \Sigma$ denote the set of maximal simplices in $\mathcal F_t$.  For $t\leq t'\in P$, define  $\Sigma_{t\subto t'}\colon \Sigma_{t}\to \Sigma_{t+1}$ to be the map sending $\sigma\in \Sigma_t$ to the lexicographically minimum simplex in $\Sigma_{t'}$ containing $\sigma$.  
The sets $(\Sigma_t)_{t\in P}$ and maps $(\Sigma_{t\subto t'})_{t\leq t'\in P}$ form a cover of $\mathcal F$, which induces a cover $\mathcal U$ of $\subdiv$ via barycentric subdivision.  Concretely, \[\mathcal U_{k,t}=\{\mathcal S(\bar \sigma)_k\mid \sigma \in \Sigma_t\},\] where $\bar \sigma$ is the simplicial complex consisting of $\sigma$ and all of its faces.  We define $\mathcal{NF}$ to be $\Ner_{\mathcal U}$.    
See \Cref{Fig:Main} for an illustration.

\begin{figure}
    \begin{subfigure}[b]{.3\linewidth}
    \centering
        \includegraphics[height=5.5cm]{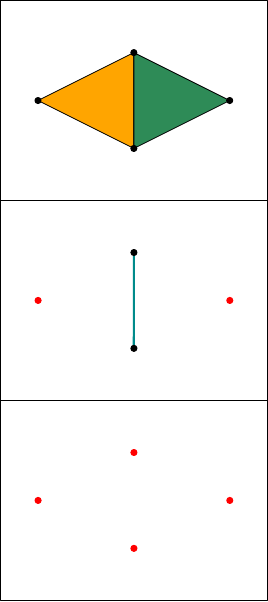}
        \caption{Part of the filtration $\rips(X)$, with maximal simplices colored.}
        \vspace{1cm}
        \label{Subfig:Undivided}
    \end{subfigure}
    \hskip20pt
    \begin{subfigure}[b]{.6\linewidth}
    \centering
        \includegraphics[height=5.5cm]{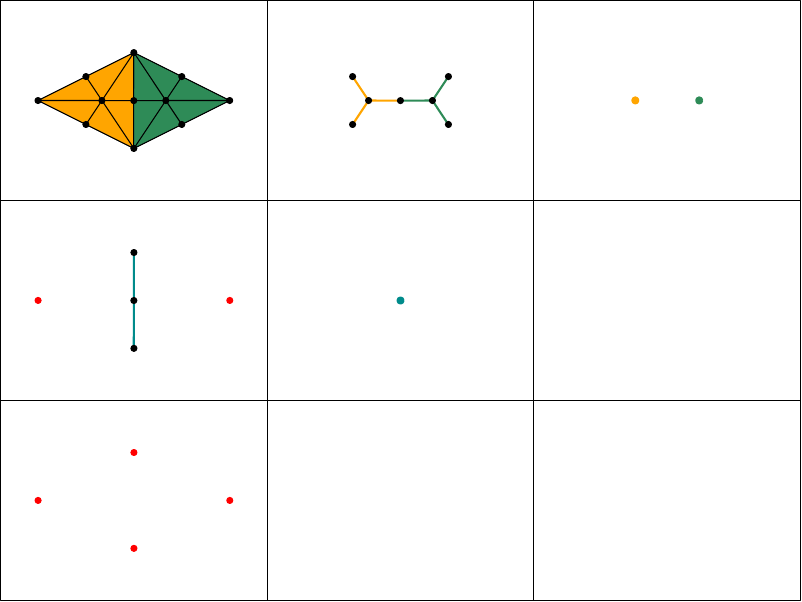}
        \caption{The bifiltration $\srips(X)$, with each maximal simplex given the color of its corresponding undivided maximal simplex in (A).}
        \vspace{1cm}
        \label{fig:main2}
    \end{subfigure}
    \begin{subfigure}[b]{\linewidth}
    \centering
        \includegraphics[height=5.5cm]{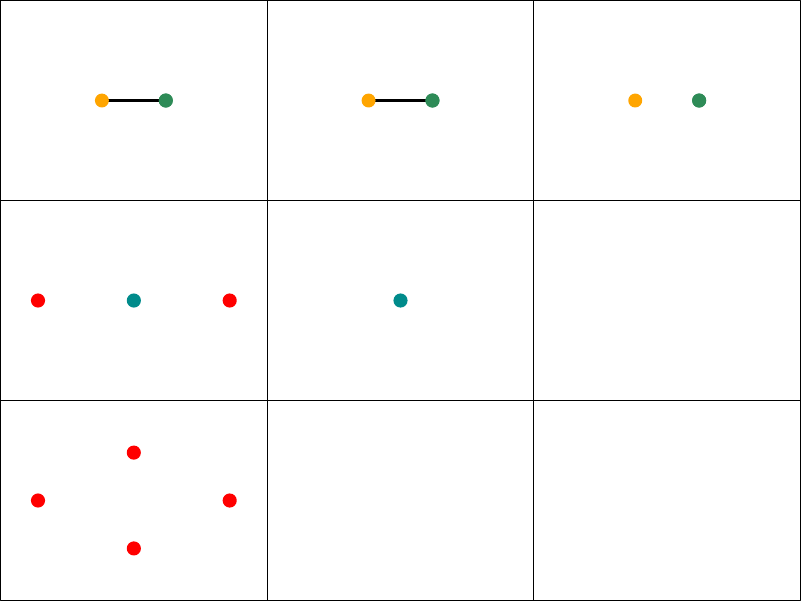}
        \caption{The semifiltration $\mathcal{NR}(X)$, with each vertex given the color of its corresponding maximal simplex in (A).  Whether the blue vertex in the center panel maps to orange or to the green vertex in the center-top panel depends on a choice of order on $X$: The blue vertex maps to the orange one if and only if the leftmost point of $X$ is ordered before the rightmost point.}
    \end{subfigure}
    \caption{Construction of $\mathcal{NR}(X)$, for $X$ a set of four points in $\R^2$.}
        \label{Fig:Main}
\end{figure}

\begin{proposition}\label{cor:nerve-srips-equiv}\sloppy
   For any poset $P$ and filtration ${\mathcal F\colon P\to \Simp}$, we have $\mathcal{NF} \simeq \subdiv$.
\end{proposition}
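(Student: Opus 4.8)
The plan is to deduce the proposition from the functorial nerve theorem, \cref{theo:functorial-nerve-theorem}. Since $\mathcal{NF}=\Ner_{\mathcal U}$ by construction, it suffices to check that $\mathcal U$ is a good simplicial cover of $\subdiv$; that is, that $\mathcal U$ is a functor $\N^{\op}\times P\to\cCov$ with $\mathcal Y\circ\mathcal U=\subdiv$ whose covering subcomplexes have the property that every finite intersection of them is empty or contractible. I would organize the verification into three parts: the covering property ($\mathcal Y\circ\mathcal U=\subdiv$), functoriality of $\mathcal U$, and goodness.

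For the covering property, one must show $\bigcup_{\sigma\in\Sigma_t}\mathcal S(\bar\sigma)_k=\mathcal S(\mathcal F_t)_k$ for each $(k,t)$. The inclusion $\supseteq$ is automatic; conversely, a simplex of $\mathcal S(\mathcal F_t)_k$ is a flag $\rho_1\subsetneq\cdots\subsetneq\rho_m$ of $\mathcal F_t$ with $\dim\rho_1\geq k-1$, and choosing any maximal simplex $\tau$ of $\mathcal F_t$ with $\rho_m\subseteq\tau$ (such a $\tau$ exists since every simplex of $\mathcal F_t$ lies under a maximal one whenever $\mathcal F_t$ is finite, as it is in the settings of this paper, so $\tau\in\Sigma_t$), the entire flag lies in $\bar\tau$, hence is a simplex of $\mathcal S(\bar\tau)_k$.

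For goodness, I would first reduce a finite intersection to a single subdivided simplex: given $\sigma_1,\dots,\sigma_j\in\Sigma_t$, a flag belongs to every $\mathcal S(\bar\sigma_i)_k$ precisely when each of its elements is a face of $\tau:=\bigcap_i\sigma_i$ and its minimum element has dimension at least $k-1$, so $\bigcap_{i=1}^j\mathcal S(\bar\sigma_i)_k=\mathcal S(\bar\tau)_k$, where $\tau$ is possibly the empty simplex. If $\dim\tau<k-1$ this is empty; otherwise the barycenter vertex $\hat\tau$ of $\tau$ lies in $\mathcal S(\bar\tau)_k$, and for every flag $F$ of $\mathcal S(\bar\tau)_k$ the set $F\cup\{\hat\tau\}$ is again a flag (all elements of $F$ are faces of $\tau$) whose minimum element still has dimension at least $k-1$, hence is a simplex of $\mathcal S(\bar\tau)_k$. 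Thus $\mathcal S(\bar\tau)_k$ is a cone with apex $\hat\tau$, hence contractible, so $\mathcal U$ is good and \cref{theo:functorial-nerve-theorem} yields $\mathcal{NF}=\Ner_{\mathcal U}\simeq\subdiv$.

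The step I expect to be the real obstacle is functoriality of $\mathcal U$. That the cover-index maps carry the structure maps of $\subdiv$ is easy: writing $\tau'=\Sigma_{t\subto t'}(\sigma)\supseteq\sigma$, we get $\mathcal S(\bar\sigma)_k\subseteq\mathcal S(\overline{\tau'})_k\subseteq\mathcal S(\overline{\tau'})_l$ whenever $k\geq l$. The delicate point is the composition law $\Sigma_{t'\subto t''}\circ\Sigma_{t\subto t'}=\Sigma_{t\subto t''}$: with the naive rule ``send $\sigma$ to the lexicographically smallest maximal simplex of $\mathcal F_{t'}$ containing $\sigma$'', a maximal simplex of $\mathcal F_{t''}$ that contains $\sigma$, does not contain $\Sigma_{t\subto t'}(\sigma)$, and is lexicographically smaller than $\Sigma_{t'\subto t''}(\Sigma_{t\subto t'}(\sigma))$ can appear between $t'$ and $t''$, breaking strict functoriality. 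Making the cover a strict functor therefore seems to require choosing the well-ordering of vertices compatibly with $\mathcal F$ together with a monotonicity argument for the lex-minimal choice, or a more careful recursive definition of the maps $\Sigma_{t\subto t'}$; alternatively, one could invoke a form of the nerve theorem insensitive to replacing $\mathcal U$ by a naturally isomorphic functor. Everything else is routine given the cone observation above.
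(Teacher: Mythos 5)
Your proof takes the same route as the paper: verify that $\mathcal U$ is a good cover of $\mathcal{SF}$ and then apply \cref{theo:functorial-nerve-theorem}. Your verification of the covering property and your cone argument for contractibility of a nonempty intersection are correct and are in substance the same as the paper's \cref{lem:bary-int} and \cref{lem:spx-contract} (the apex $\hat\tau$ is exactly the maximal element of the poset whose order complex the paper invokes).

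Your concern about functoriality is well-founded, and the paper's own proof silently skips it. The rule ``$\Sigma_{t\subto t'}(\sigma)$ is the lexicographically minimum element of $\Sigma_{t'}$ containing $\sigma$'' does not compose in general, even for Rips filtrations of three points. Take $X=\{1,2,3\}$ with $d(2,3)=1$, $d(1,2)=2$, $d(1,3)=3$, order the vertices $1<2<3$, and set $\mathcal F=\rips(X)$. Then $\Sigma_0=\{\{1\},\{2\},\{3\}\}$, $\Sigma_{1/2}=\{\{1\},\{2,3\}\}$, and $\Sigma_1=\{\{1,2\},\{2,3\}\}$. Applying the lex-min rule in one step gives $\Sigma_{0\subto 1}(\{2\})=\{1,2\}$, but going through $t'=1/2$ gives $\Sigma_{1/2\subto 1}\bigl(\Sigma_{0\subto 1/2}(\{2\})\bigr)=\Sigma_{1/2\subto 1}(\{2,3\})=\{2,3\}$, so the diagram fails to commute. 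The clean repair is to define $\Sigma_{t\subto t'}$ only on a generating set of morphisms — successive critical values of a finitely presented filtration on $[0,\infty)$, or $t'=t+1$ when $P=\mathbb N$ — and extend to all $t\leq t'$ by composition; this yields a genuine functor, still satisfies $\sigma\subset\Sigma_{t\subto t'}(\sigma)\in\Sigma_{t'}$, and makes no difference to the covering and goodness arguments, which only see the complexes $\mathcal U_{k,t}$ objectwise. The ``naturally isomorphic replacement'' alternative you mention does not help here, since the issue is that the assignment is not a functor at all rather than being the wrong representative of its isomorphism class.
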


To prove \Cref{cor:nerve-srips-equiv}, we will need the following lemmas.

\begin{lemma}\label{lem:spx-contract}
Let $\Delta$ be a simplex, regarded as a simplicial complex.  For any $k \in \mathbb N^{\op}$, $\mathcal S(\Delta)_k$ is etither empty or contractible.
\end{lemma}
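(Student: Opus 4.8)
The plan is to show that $\mathcal S(\Delta)_k$, when non-empty, is a simplicial cone, and is therefore contractible. Write $n=\dim\Delta$. Unwinding the definition, the simplices of $\mathcal S(\Delta)_k$ are exactly the flags of $\Delta$ whose minimum face has dimension at least $k-1$; these already form a subcomplex of $\Bary{\Delta}$, since passing from a flag to a sub-chain can only increase the dimension of the minimum face. Every non-empty face of $\Delta$ has dimension between $0$ and $n$, so if $k-1>n$ there are no such flags and $\mathcal S(\Delta)_k=\emptyset$, and there is nothing to prove. Hence I would assume $k-1\le n$, i.e.\ $\dim\Delta\ge k-1$.

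The key step is to exhibit a cone apex. Let $\hat\Delta$ be the vertex of $\Bary{\Delta}$ corresponding to the top face $\Delta$; the one-element flag $(\Delta)$ lies in $\mathcal S(\Delta)_k$, since its minimum (and only) face has dimension $n\ge k-1$. I claim that every maximal simplex of $\mathcal S(\Delta)_k$ contains $\hat\Delta$. Indeed, if $\sigma_1\subsetneq\cdots\subsetneq\sigma_m$ is a flag in $\mathcal S(\Delta)_k$ with top face $\sigma_m\ne\Delta$, then $\sigma_m\subsetneq\Delta$, so $\sigma_1\subsetneq\cdots\subsetneq\sigma_m\subsetneq\Delta$ is a strictly larger flag whose minimum face is still $\sigma_1$ and which therefore also lies in $\mathcal S(\Delta)_k$; hence the original flag is not maximal. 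So the top face of any maximal flag is $\Delta$, i.e.\ every maximal simplex contains $\hat\Delta$. It follows that $\tau\cup\{\hat\Delta\}\in\mathcal S(\Delta)_k$ for every simplex $\tau\in\mathcal S(\Delta)_k$ (take any maximal simplex containing $\tau$), which says precisely that $\mathcal S(\Delta)_k$ is a cone with apex $\hat\Delta$; its geometric realization is then a topological cone and hence contractible.

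Conceptually this is just the familiar fact that the order complex of a poset with a greatest element is contractible: $\Bary{\Delta}$ is the order complex of the poset of non-empty faces of $\Delta$, and $\mathcal S(\Delta)_k$ is the order complex of the subposet of faces of dimension $\ge k-1$, which has greatest element $\Delta$ exactly when it is non-empty. I do not anticipate a real obstacle; the only points that need care are the empty/non-empty dichotomy and the (harmless) bookkeeping that a flag is a strictly increasing chain of non-empty faces of $\Delta$ — which is exactly what legitimizes ``appending $\Delta$ at the top'' when, and only when, $\Delta$ is not already the top face.
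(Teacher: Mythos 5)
Your proof is correct and takes essentially the same route as the paper: the paper simply observes that $\mathcal S(\Delta)_k$ is the order complex of a subposet of the face poset of $\Delta$ which, when non-empty, has a greatest element (the top face $\Delta$), and cites the standard fact that such order complexes are contractible. You spell out that fact from scratch by exhibiting $\hat\Delta$ as a cone apex, but the underlying idea is identical, and you even note the order-complex interpretation yourself at the end.
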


\begin{proof}
Observe that $\mathcal S(\Delta)_k$ is the order complex of a subposet $Q$ of the face poset of $\Delta$.  If $Q$ is not empty, then it has a maximal element, namely,  the top-dimensional simplex of $\Delta$.  As the order complex of a poset with a maximal element is contractible \cite{wachsPoset07}, the result follows.
\end{proof}

See \cref{fig:subdiv-2spx} for an illustration of \cref{lem:spx-contract} in the case $j=2$.   We omit the easy proof of the next lemma.

\begin{lemma}\label{lem:bary-int}
    Let $W$ be a finite simplicial complex, and let $\{\sigma_i\}$ be a collection of simplices of $W$. Then for any $k \in \mathbb N^{\op}$,
    \[
    \bigcap_{i} \mathcal S(\sigma_i)_k = \mathcal S \left( \bigcap_{i} \sigma_i \right)_k.
    \]
\end{lemma}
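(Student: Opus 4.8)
The plan is to show that the two subcomplexes of $\Bary{W}$ appearing in the statement have literally the same simplices; the asserted equality then follows. Recall from \cref{sec:SRips} that a simplex of $\Bary{W}$ is a flag $F=(\tau_1\subset\cdots\subset\tau_m)$ of faces of $W$, and that $\mathcal S(W)_k$ is the subcomplex of $\Bary{W}$ spanned by those flags whose minimum element has dimension at least $k-1$. The first observation I would make is that this family of flags is already closed under passing to faces: a face of $F$ is a subchain, and its minimum element $\tau_j$ satisfies $\dim\tau_j\geq\dim\tau_1\geq k-1$. Hence ``spanned by'' may be replaced by ``consisting of'': $\mathcal S(W)_k$ is precisely the set of flags $F$ of $W$ with $\dim(\min F)\geq k-1$.

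Next, regarding each simplex $\sigma_i$ as the simplicial complex $\bar\sigma_i$ of its faces, I would note that $\bigcap_i\sigma_i$, being a subset of the simplex $\sigma_1$, is again a face of $W$, so $\mathcal S\big(\bigcap_i\sigma_i\big)_k$ is defined; moreover every complex appearing in the statement is a subcomplex of $\Bary{W}$, so the left-hand intersection is computed flag by flag. By the description above, for a flag $F$ of $W$, we have that $F$ is a simplex of $\mathcal S(\sigma_i)_k$ if and only if $\tau\subseteq\sigma_i$ for every $\tau\in F$ and $\dim(\min F)\geq k-1$; hence $F$ is a simplex of $\bigcap_i\mathcal S(\sigma_i)_k$ if and only if $\tau\subseteq\sigma_i$ for every $\tau\in F$ and every $i$, and $\dim(\min F)\geq k-1$.

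To finish, I would invoke the elementary fact (identifying a simplex with its vertex set) that $\tau\subseteq\sigma_i$ for all $i$ if and only if $\tau\subseteq\bigcap_i\sigma_i$. Substituting this into the condition above converts it into ``$\tau\subseteq\bigcap_i\sigma_i$ for every $\tau\in F$, and $\dim(\min F)\geq k-1$'', which says exactly that $F$ is a simplex of $\mathcal S\big(\bigcap_i\sigma_i\big)_k$. This establishes the equality; the degenerate case $\bigcap_i\sigma_i=\emptyset$ is consistent, since then both sides have no vertices and hence are empty. I do not anticipate any real obstacle here---the whole argument is an unwinding of definitions---the only points meriting a moment's care being the reduction of ``spanned by'' to ``consisting of'' and the remark that $\bigcap_i\sigma_i$ is again a face of $W$, so that the right-hand side is well-defined in the first place.
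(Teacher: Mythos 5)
The paper declares this lemma's proof omitted because it is ``easy,'' so there is no argument to compare against; your unwinding of definitions is correct and is surely the argument the authors had in mind. The two points you single out---that ``spanned by'' may be replaced by ``consisting of'' since the flag condition $\dim(\min F)\geq k-1$ is inherited by subflags, and that a flag $F$ lies in $\mathcal S(\bar\sigma_i)_k$ for every $i$ exactly when each $\tau\in F$ lies in $\bigcap_i\sigma_i$ (with the empty-intersection case giving empty complexes on both sides)---are precisely the only things that need checking.
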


\begin{proof}[Proof of \cref{cor:nerve-srips-equiv}]
By  \Cref{theo:functorial-nerve-theorem}, it suffices to show that  $\mathcal U$ is a good cover of $\subdiv$, i.e., that for any $(k,t) \in \N^{\op}\times P$ and finite $U \subset \mathcal U_{k,t}$, the intersection of elements in $U$ is either empty or contractible.  Write $U=\{U^1,U^2, \dots,U^l\}$, and let
    \[
    \overline U := \bigcap_{i=1}^l U^i.
    \]
    If $\overline U= \emptyset$, we are done, so suppose $\overline U$ is nonempty. For each $i$, we have $U^i = \mathcal S(\sigma_i)_k$ for some $\sigma_i \in \Sigma_t$. Since any nonempty intersection of simplices in a simplicial complex is a simplex, we have 
    \[
    \bigcap_{i=1}^l \sigma_i = \Delta
    \]
    for some simplex $\Delta$. Applying \Cref{lem:bary-int}, we then get
    \[
    \overline U = \bigcap_{i=1}^l \mathcal S(\sigma_i)_k = \mathcal S \left( \bigcap_{i=1}^l \sigma_i \right)_k = \mathcal S(\Delta)_k,
    \]
    which is contractible by \Cref{lem:spx-contract}.
\end{proof}

\begin{lemma}\label{lem:semifiltration}
    For any totally ordered set $T$ and filtration $\mathcal F\colon T\to \Simp$, $\mathcal{NF}$ is a semifiltration.
\end{lemma}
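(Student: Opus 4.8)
\textit{Proposal.} The plan is to unwind the definition of semifiltration and reduce to a statement about the nerve functor applied to structure maps in the $\N^{\op}$-direction. Since $\mathcal{NF}$ is indexed by $\N^{\op}\times T$, showing it is a semifiltration amounts to showing that for each fixed $t\in T$ the restriction $\mathcal{NF}_{-,t}\colon \N^{\op}\times\{t\}\to\Simp$ is a filtration, i.e., that the structure maps $\mathcal{NF}_{(k,t)\subto(k',t)}$ are simplicial inclusions whenever there is a morphism $k\to k'$ in $\N^{\op}$ (equivalently $k\geq k'$ in $\N$).

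First I would pin down these structure maps concretely. For fixed $t$, the cover $\mathcal U_{k,t}$ of $\mathcal S(\mathcal F_t)_k$ is indexed by $\Sigma_t$ for every $k$, with the element indexed by $\sigma\in\Sigma_t$ being $\mathcal S(\bar\sigma)_k$; crucially, along the $\N^{\op}$-direction the family of maximal simplices $\Sigma_t$ indexing the cover of $\mathcal F_t$ does not change, so the cover-index component of the structure map $\mathcal U_{(k,t)\subto(k',t)}$ is the identity on $\Sigma_t$ (only the ambient complex and the cover elements shrink, via the inclusions coming from the filtration $\mathcal S(-)\colon \N^{\op}\to\Simp$). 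Applying the nerve functor $\Ner\colon\cCov\to\Simp$, the induced map $\mathcal{NF}_{(k,t)\subto(k',t)}\colon\Ner(\mathcal U_{k,t})\to\Ner(\mathcal U_{k',t})$ is therefore the simplicial map $S\mapsto S$ on the common vertex set $\Sigma_t$.

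It then remains to check that this map is an inclusion of simplicial complexes, i.e., that $\Ner(\mathcal U_{k,t})$ is a subcomplex of $\Ner(\mathcal U_{k',t})$. For $k\geq k'$ we have $\mathcal S(\bar\sigma)_k\subseteq\mathcal S(\bar\sigma)_{k'}$ for each $\sigma\in\Sigma_t$, since $\mathcal S(\bar\sigma)\colon\N^{\op}\to\Simp$ is a filtration. Hence for any finite $S\subseteq\Sigma_t$ we have $\bigcap_{\sigma\in S}\mathcal S(\bar\sigma)_k\subseteq\bigcap_{\sigma\in S}\mathcal S(\bar\sigma)_{k'}$, so nonemptiness of the former intersection forces nonemptiness of the latter; that is, every simplex of $\Ner(\mathcal U_{k,t})$ is a simplex of $\Ner(\mathcal U_{k',t})$. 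This is precisely the required subcomplex relation, and identifies the structure map as the inclusion.

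I do not expect a genuine obstacle here: the content is bookkeeping. The one point needing care is getting the variance right—$k$ increasing corresponds to a morphism in $\N^{\op}$ and to shrinking the complexes $\mathcal S(-)_k$—together with the observation that the cover-index component of the $\N^{\op}$-direction structure maps of $\mathcal U$ really is the identity. I would also remark that no analogous statement holds in the $T$-direction, since there the cover-index maps $\Sigma_{t\subto t'}$ need not be injective, which is exactly why $\mathcal{NF}$ is only a semifiltration and not a bifiltration.
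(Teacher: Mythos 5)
Your proof is correct and follows the same route as the paper's: both fix $t$, note that the cover of $\mathcal S(\mathcal F_t)_k$ is indexed by the fixed set $\Sigma_t$ with cover elements $\mathcal S(\bar\sigma)_k$ shrinking as $k$ increases, and deduce the nerve inclusion directly. Your proposal spells out more explicitly that the cover-index maps in the $\N^{\op}$-direction are identities and that this fails in the $T$-direction, but the substance is identical.
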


\begin{proof}
    Fix $t\in T$ and let $j \leq k$ in $\N^{\mathrm{op}}$. Writing $\mathcal N=\mathcal{NF}$, we have 
    \begin{align*}
        \mathcal {N}_{j,t} &= \Ner \{\mathcal S(\sigma)_j \mid \sigma \in \Sigma_t\},\\
        \mathcal {N}_{k,t} &= \Ner \{\mathcal S(\sigma)_k \mid \sigma \in \Sigma_t\}.
       \end{align*}
    Since $\mathcal S(\sigma)_j \subset \mathcal S(\sigma)_k$ for every $\sigma \in \Sigma_t$, we have 
  $\mathcal{N}_{j,t} \subset \mathcal N_{k,t}.$
\end{proof}

\begin{proposition}\label{Prop:Betti_Numbers_Semi_Filtrations}
Let $\mathcal A\colon \N^2 \to \Simp$ be a finitely presented semifiltration.  For any $j\geq 0$, we have $\beta_1(\Ch{j} \mathcal A)\leq \beta_0(\Ch{j} \mathcal A)$, and $\beta_i(\Ch{j}\mathcal A)=0$ for $i>1$. 
\end{proposition}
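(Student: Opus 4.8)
The plan is to reduce the statement to a structural fact about each chain module $\Ch{j}\mathcal A$ when $\mathcal A$ is a semifiltration, namely that it decomposes as a direct sum of interval modules supported on sets of a very simple shape, for which the minimal free resolution is easy to write down. Recall from \cref{Sec:Chain_Complexes} that $\Ch{j}\mathcal A \cong \bigoplus_{\sigma} \mathbb K^\sigma$, where $\sigma$ ranges over the $j$-dimensional simplices of $\colim \mathcal A$ and $\mathbb K^\sigma$ is the indicator module of the support set $S_\sigma := \{(a,b)\in \N^2 \mid \sigma\in \mathcal A_{a,b}\}$. Since total Betti numbers are additive over direct sums, it suffices to show that each $\mathbb K^\sigma$ has $\beta_0 = 1$, $\beta_1 \le 1$, and $\beta_i = 0$ for $i>1$; summing over the (finitely many, by finite presentation) simplices $\sigma$ then yields $\beta_0(\Ch{j}\mathcal A) = \#\{j\text{-simplices}\}$, $\beta_1(\Ch{j}\mathcal A) \le \#\{j\text{-simplices}\} = \beta_0(\Ch{j}\mathcal A)$, and $\beta_i(\Ch{j}\mathcal A)=0$ for $i>1$, which is exactly the claim.

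The key geometric input is the shape of $S_\sigma$. Because $\mathcal A$ is a semifiltration, for each fixed first coordinate $a$ the restriction $\mathcal A_{a,-}$ is a filtration, so $\{b \mid \sigma \in \mathcal A_{a,b}\}$ is an up-set in the totally ordered set $\N$, i.e.\ an interval $[f(a),\infty)$ (or empty), for some function $f\colon \N \to \N\cup\{\infty\}$. Moreover $S_\sigma$ is an up-set in $\N^2$ (it is the support of a persistence module on $\N^2$, and support of a sub/quotient-free piece is upward closed — more concretely, $\mathbb K^\sigma$ has all structure maps identities where nonzero). Upward closedness forces $f$ to be nonincreasing: if $a \le a'$ then $(a,f(a)) \le (a', f(a))$ would have to lie in $S_\sigma$ whenever $f(a)<\infty$... wait, I should be careful about the direction. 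Since $S_\sigma$ is upward closed and $(a,b)\in S_\sigma \Rightarrow (a',b)\in S_\sigma$ for $a'\ge a$, we get $f(a')\le f(a)$, so $f$ is nonincreasing. Thus $S_\sigma$ is a "staircase" up-set: the region above the graph of a nonincreasing step function. After restricting to the finite grid on which $\mathcal A$ is supported (by finite presentation), this staircase has finitely many "corners" $(a_1,b_1),\dots,(a_r,b_r)$ (the minimal elements of $S_\sigma$), so $\mathbb K^\sigma$ has $\beta_0 = r$ generators... but we want $\beta_0 = 1$.

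This is the step I expect to be the main obstacle, and it means the reduction above is too crude: $\mathbb K^\sigma$ is generally \emph{not} indecomposable as a module, but it \emph{is} an interval module on a staircase region, and the right statement is that a staircase up-set interval module in $\N^2$ has $\beta_0 = r$, $\beta_1 = r-1$, $\beta_i = 0$ for $i \ge 2$, where $r$ is the number of corners — so $\beta_1 = \beta_0 - 1 \le \beta_0$, and the inequality $\beta_1(\mathbb K^\sigma)\le \beta_0(\mathbb K^\sigma)$ holds term by term with $\beta_{\ge 2}=0$. Summing gives $\beta_1(\Ch{j}\mathcal A) \le \beta_0(\Ch{j}\mathcal A)$ and $\beta_{\ge 2}(\Ch{j}\mathcal A)=0$, which is the proposition (the cruder bound $\beta_0 = 1$ per simplex is false but unnecessary). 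So the real work is: (1) verify $S_\sigma$ is a staircase up-set, using the semifiltration hypothesis plus upward closedness as above; (2) compute the minimal free resolution of a staircase interval module — the generators are the $r$ corners, the first syzygies are the $r-1$ "inner corners" (joins of consecutive corners) where two generators become identified, and there are no higher syzygies because the nerve of the corresponding cover of the staircase by the principal up-sets of the corners is a path, hence has no $H_{\ge 1}$; this can be made rigorous via the standard Koszul/Mayer–Vietoris computation for monomial-ideal-like modules over $\N^2$, or cited from the structure theory of 2-parameter interval modules. I would carry out (2) by induction on $r$: splitting off the up-set of a single extremal corner gives a short exact sequence $0\to \mathbb K^{\langle c_1\rangle}\cap(\text{rest}) \to \mathbb K^{\langle c_1\rangle}\oplus \mathbb K^{(\text{rest})} \to \mathbb K^\sigma \to 0$ with the intersection term again a (single-corner, hence free) principal up-set module, and then assemble resolutions via the horseshoe lemma, tracking that minimality is preserved because all the relevant maps vanish modulo the graded maximal ideal. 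The finite-presentation hypothesis is what guarantees $r<\infty$ and that only finitely many simplices contribute, so that all the Betti numbers in sight are finite and the term-by-term inequality sums correctly.
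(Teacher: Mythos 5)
There is a genuine gap at the first step. You appeal to the decomposition $\Ch{j}\mathcal A \cong \bigoplus_\sigma \mathbb K^\sigma$, treating each $\mathbb K^\sigma$ as an indicator (interval) module whose support is upward closed, and then reason about staircase up-sets. But that decomposition is a feature of \emph{filtrations}, not semifiltrations: it relies on every simplex that appears at a given index persisting to all larger indices, so that the support of each $\mathbb K^\sigma$ is an up-set and the structure maps can be taken to be identities on the support. For a semifiltration $\mathcal A\colon \N^2 \to \Simp$, the structure maps in the second coordinate direction are arbitrary simplicial maps, which can identify vertices and thereby annihilate higher-dimensional chains. Concretely, if $\mathcal A_{(a,0)\subto(a,1)}$ collapses an edge $[u,v]$ to a vertex, then $(\Ch{1}\mathcal A)_{(a,0)\subto(a,1)}$ sends the corresponding basis element to $0$, so $\Ch{1}\mathcal A$ is not a direct sum of up-set indicator modules; similarly, collapsing two vertices to one produces a structure map $\mathbb K^2 \to \mathbb K$ in $\Ch{0}\mathcal A$ that cannot be a direct sum of two up-set modules. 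Your parenthetical justification that ``$\mathbb K^\sigma$ has all structure maps identities where nonzero'' is exactly the conclusion one would need, and it is false here; the subsequent staircase analysis and the claim that each summand satisfies $\beta_1 \le \beta_0$ therefore do not apply.

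The paper's proof sidesteps any decomposition and works directly with the chain module. It uses only what the semifiltration hypothesis actually gives, namely that the horizontal structure maps of $\Ch{j}\mathcal A$ are injective: by the Koszul-homology formula, $\beta_2(\Ch{j}\mathcal A)$ is a sum of nullities of maps $M_{y-1,z-1} \to M_{y,z-1}\oplus M_{y-1,z}$, which are injective once one component is, so $\beta_2 = 0$; Hilbert's syzygy theorem gives $\beta_i = 0$ for $i > 2$; and the Hilbert-function identity $\dim M_{y,z} = \beta_0 - \beta_1 + \beta_2 \ge 0$ for large $(y,z)$ then yields $\beta_1 \le \beta_0$. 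If you want to keep a decomposition-based argument, you would first need to establish that $\Ch{j}\mathcal A$ is interval-decomposable with supports that are up-sets in the first coordinate direction but possibly not the second (i.e.\ a union of horizontal rays), and that such interval modules still satisfy $\beta_1 \le \beta_0$ and $\beta_{\ge 2} = 0$; this is plausible but is a substantially different (and not obviously easier) computation than the up-set staircase case you wrote down.
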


\begin{proof}
Since $\Ch{j}\mathcal A$ is finitely generated, Hilbert's syzygy theorem implies that $\beta_i(\Ch{j}\mathcal A)=0$ for $i>2$ \cite[Theorem 15.2]{peeva2011graded}.  To see that $\beta_2(\Ch{j}\mathcal A)=0$, we use the well-known characterization of Betti numbers in terms of Koszul homology; see, e.g., \cite[Proposition 2.7]{eisenbud2006geometry} for the $\Z$-graded case.    For a finitely generated persistence module $M\colon 
\mathbb N^2 \to \Vec$, the Koszul homology formula for $2^{\mathrm{nd}}$ Betti numbers implies that
\[\beta_2(M)=\sum_y \sum_z \Nullity(M_{y-1,z-1}\xrightarrow{f_{y,z}} M_{y,z-1}\oplus M_{y-1,z}),\] where the map $f_{y,z}$ is induced by the structure maps of $M$.  Since all of the horizontal structure maps of $\Ch{j}\mathcal A$ are injections, this formula yields $\beta_2(\Ch{j}\mathcal A)=0$.  

According to a standard inclusion-exclusion formula relating Betti numbers and the Hilbert function \cite[Theorem 16.2]{peeva2011graded}, for $y,z$ sufficiently large we have \[\dim(M_{y,z})=\beta_0(M)-\beta_1(M)+\beta_2(M).\]  Since $\dim M_{y,z}\geq 0$, this implies that $\beta_0(M)\geq \beta_1(M)-\beta_2(M)$.  Since $\beta_2(\Ch{j}\mathcal A)=0$, we have $\beta_1(\Ch{j}\mathcal A)\leq \beta_0(\Ch{j}\mathcal A)$.
\end{proof}

To bound the size of $\mathcal{NF}$ in \cref{Cor:Thm_Generalization}\,(i), we will use the $j=0$ case of the following variant of \cref{Prop:Betti_Numbers_Semi_Filtrations}:
\begin{corollary}\label{Prop:Betti_Numbers_Semi_Filtrations_Other_Indices}
Let $T$ be a totally ordered set and let $\mathcal A\colon \mathbb N^{\op}\times T\to \Simp$ be a finitely presented semifiltration.  For any $j\geq 0$, we have $\beta_1(\Ch{j} \mathcal A)\leq \beta_0(\Ch{j} \mathcal A)$, and $\beta_i(\Ch{j}\mathcal A)=0$ for $i>1$. 
\end{corollary}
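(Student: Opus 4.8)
The plan is to reduce the statement about a semifiltration indexed by $\N^{\op}\times T$ to the already-proved \cref{Prop:Betti_Numbers_Semi_Filtrations}, which handles the case $\N^2$. The key observation is that total Betti numbers of a finitely presented persistence module depend only on the finite sub-poset on which the module is ``active'': since $\Ch{j}\mathcal A$ is finitely presented, its minimal resolution has generators and relations supported at finitely many grades, and one can restrict attention to a finite grid $G_1\times G_2 \subset \N^{\op}\times T$ containing all of these grades together with enough comparison points to detect the syzygies. More precisely, I would invoke the standard fact (e.g.\ via the Koszul homology / inclusion-exclusion description of Betti numbers used already in the proof of \cref{Prop:Betti_Numbers_Semi_Filtrations}) that $\beta_i$ is computed locally at each grade from the structure maps among the module's values at that grade and its immediate predecessors. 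Hence replacing the index poset by any finite full sub-poset containing the relevant grades, and then by an order-isomorphic copy of a sub-poset of $\N^2$, leaves all $\beta_i(\Ch{j}\mathcal A)$ unchanged.

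Concretely, I would proceed as follows. First, fix $j\geq 0$ and let $F$ be a minimal free resolution of $\Ch{j}\mathcal A$; finite presentability gives that $F_0, F_1$ are finitely generated, and $\N^{\op}\times T$ has the property (shared with $\N^2$) that finitely presented modules have finite free resolutions. Second, collect the finite set $S\subset \N^{\op}\times T$ of all grades of generators appearing in $F_0, F_1$ (and $F_2$, which we will show vanishes), and let $G\subset \N^{\op}\times T$ be a finite sub-poset containing $S$ and closed under the meets that arise in the Koszul differential — in the two-parameter setting this just means including, for each pair of grades, their componentwise minimum. Third, observe that $G$ embeds, as a poset, into $\N^2$: the $\N^{\op}$-coordinate of $G$ takes finitely many values and can be relabeled by a finite decreasing sequence of natural numbers in reverse, while the $T$-coordinate takes finitely many values in a totally ordered set and can be relabeled by an initial segment of $\N$. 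Fourth, the restriction $\mathcal A|_{G}$ is a semifiltration (the horizontal maps, in the $T$-direction, remain injections after restriction), so it extends to — or is the restriction of — a finitely presented semifiltration on $\N^2$, to which \cref{Prop:Betti_Numbers_Semi_Filtrations} applies, yielding $\beta_1(\Ch{j}\mathcal A|_G)\leq \beta_0(\Ch{j}\mathcal A|_G)$ and $\beta_i(\Ch{j}\mathcal A|_G)=0$ for $i>1$. Finally, since $G$ was chosen to contain all grades relevant to the minimal resolution of $\Ch{j}\mathcal A$, the Betti numbers are unchanged under this restriction, giving the claim.

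Alternatively — and perhaps more cleanly — I would avoid the restriction argument entirely and just reprove the bound directly by repeating the Koszul-homology computation from the proof of \cref{Prop:Betti_Numbers_Semi_Filtrations}, noting that every ingredient used there is available for an arbitrary two-parameter poset $T_1\times T_2$ with $T_1, T_2$ totally ordered: Hilbert's syzygy theorem bounds global dimension by $2$ for a poset that is a product of two total orders once one works with the appropriate notion of finitely generated module (this is where finite presentability and the passage to a finite grid is genuinely needed, so the two approaches converge); the vanishing $\beta_2(\Ch{j}\mathcal A)=0$ follows because the structure maps in the $T$-direction are injective, so the relevant nullities vanish; and the inclusion-exclusion formula relating $\dim M_{y,z}$ to $\beta_0-\beta_1+\beta_2$ for $(y,z)$ large gives $\beta_0\geq \beta_1-\beta_2=\beta_1$.

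The main obstacle I anticipate is purely bookkeeping: making precise the claim that total Betti numbers are insensitive to enlarging or shrinking the index poset among finite full sub-posets containing the support of the minimal resolution, and verifying that a finite sub-poset of $\N^{\op}\times T$ with the semifiltration property embeds into $\N^2$ in a way that preserves the semifiltration property (i.e.\ that the embedding is a poset isomorphism onto its image and respects which direction carries injective maps). Both points are standard, so I expect the proof to be short; the cleanest writeup is likely the one-line reduction: ``restrict to a finite grid, apply \cref{Prop:Betti_Numbers_Semi_Filtrations}, and note Betti numbers are preserved.''
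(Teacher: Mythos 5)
Your proposal takes essentially the same route as the paper: reduce to the $\N^2$ case via a finite grid, apply \cref{Prop:Betti_Numbers_Semi_Filtrations}, and argue that Betti numbers are preserved under the reduction. The paper makes the preservation step precise by observing that finite presentability means $\mathcal A$ is the left Kan extension along a grid function $g\colon \N^2\to\N^{\op}\times T$ of a functor $\mathcal A'\colon\N^2\to\Simp$, and that both $\mathcal E_g(-)$ and restriction along $g$ are exact and carry free resolutions to free resolutions of the same size (citing a lemma of Bjerkevik), so $\beta_i(\Ch{j}\mathcal A)=\beta_i(\Ch{j}\mathcal A')$ for all $i,j$.
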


\begin{proof}
Our argument uses ideas similar to those in  \cite[Section 2.6]{bjerkevik2021ell}, wherein the reader may find additional details.  Define a \emph{grid function} to be a poset map $g\colon \N^2 \to \N^{\op}\times T$ of the form $g(y,z)=(g_1(y),g_2(z))$, for maps $g_1\colon \N\to  \N^{\op}$ and  $g_2\colon \N\to T$.  Since $\mathcal A$ is finitely presented, $\mathcal A$ is the left Kan extension of a functor $\mathcal A'\colon \mathbb N^2\to \Simp$ along a grid function.  

Left Kan extension along $g$ is functorial, i.e., defines a functor \[\mathcal{E}_g(-)\colon \mathbf C^{\mathbb N^2}\to \mathbf C^{\mathbb N^{\op}\times T}\] for any category $\mathbf C$.  The functor $\mathcal{E}_g(-)$ commutes with the construction of simplicial chain complexes, so that in particular, we have 
\[C\mathcal A=C(\mathcal{E}_g(\mathcal A'))=\mathcal{E}_g(C\mathcal A').\]  Moreover, $\mathcal{E}_g(-)$ is exact when $\mathbf C=\Vec$ \cite[Lemma 2.23]{bjerkevik2021ell}.  Thus, for any persistence module 
$M\colon \N^2\to \Vec$, the functor $\mathcal{E}_g(-)$ maps a resolution of $M$ to a resolution of $\mathcal{E}_g(M)$ of the same size.    
In particular, taking $M=\Ch{j}\mathcal A'$, this implies that $\mathcal{E}_g(-)$ maps a resolution of  $\Ch{j}\mathcal{A'}$ to a resolution of $\Ch{j}\mathcal{A}$ of the same size.  Conversely, the restriction functor along $g$ is also exact, and so maps a resolution of $\Ch{j}\mathcal{A}$ to a resolution of $\Ch{j}\mathcal{A'}$ of the same size.   Therefore, for each $i,j\geq 0$ we have $\beta_i(\Ch{j}\mathcal{A})=\beta_i(\Ch{j}\mathcal{A}')$.  This, together with \Cref{Prop:Betti_Numbers_Semi_Filtrations}, yields the result.  
\end{proof}

\begin{proof}[Proof of \Cref{Cor:Thm_Generalization}\,(i)]
\cref{cor:nerve-srips-equiv} tells us that $\mathcal{NF}$ is a simplicial model of $\subdiv$.  Recalling our definition of size of a finitely presented $\Simp$-valued functor (\cref{Def_Size_Filtration}) and appealing to the $j=0$ case of  \cref{Prop:Betti_Numbers_Semi_Filtrations_Other_Indices}, it suffices to show that for all non-negative integers $k$, we have
\begin{equation}\label{eq:m_k_bound}
\sum_{j=0}^k  \beta_0(C_j(\mathcal{NF}))\leq m_k.
\end{equation}
Recall the definitions of $\Sigma$ and $\Sigma_t$ from the beginning of this subsection.  For $j$ a non-negative integer, let 
\[\Sigma^j=\{S\subset \Sigma\mid |S|=j+1,\ S\subset \Sigma_t \textup{ for some }t\in T,\ \bigcap_{\sigma\in S} \sigma \ne \emptyset\}.\]
For $S\in \Sigma^j$, let $\birth(S)=\max_{\sigma\in S} \birth(\sigma)$, and $\dim S=\dim (\bigcap_{\sigma\in S} \sigma)$.  Let 
$G\colon \mathbb N^{\op}\times T\to \Vec$ be the free persistence module with basis the multiset \[B^j\coloneqq \{(\dim S+1,\birth(S))\mid S \in \Sigma^j)\}.\] We have an evident surjection $G\twoheadrightarrow C_j(\mathcal{NF})$.   Hence, 
\[\beta_0(C_j(\mathcal{NF}))\leq |B^j|=|\Sigma^j|.\]   Finally, it follows from the definitions that \[\sum_{j=0}^k |\Sigma^j|\leq m_k,\]
which establishes the inequality \eqref{eq:m_k_bound}.   
\end{proof}

\subsection{Zeroth Homology}
\label{sec:ZerothHomology}
The key to our proof of \cref{Cor:Thm_Generalization}\,(ii) is the following proposition.

\begin{proposition}\label{Prop_H0_Bound}
For any totally ordered set $T$ and  finitely presented filtration $\mathcal F:T \to \Simp$, we have $\beta_0( H_0(\subdiv))=m_0$.
\end{proposition}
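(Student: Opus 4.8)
The plan is to compute $\beta_0(H_0(\subdiv))$, the number of generators in a minimal free resolution of the persistence module $H_0(\subdiv)\colon \N^{\op}\times T\to \Vec$. Since $\beta_0$ of a persistence module equals the total dimension of the graded vector space $H_0(\subdiv)/(\text{image of all proper structure maps})$, it suffices to identify, for each index $(k,t)$, which connected components of $\mathcal S(\mathcal F_t)_k$ are ``new'', i.e., not hit by the structure maps from $(k+1,t)$ (decreasing the first coordinate is forbidden since we are in $\N^{\op}$, so moving ``in'' means $k\to k+1$) or from $(k,t')$ with $t'<t$. The first observation is that the connected components of $\mathcal S(W)_k$ are in natural bijection with the $(k-1)$-dimensional-or-higher simplices of $W$ modulo the equivalence generated by ``shares a common coface of dimension $\geq k-1$''; in particular $\pi_0(\mathcal S(W)_1)=\pi_0(|W|)$, and for larger $k$ the components correspond to connected components of the poset of simplices of $W$ of dimension $\geq k-1$ ordered by inclusion. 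The key point specific to this setup: a maximal simplex $\sigma$ of $\mathcal F_t$ of dimension $d$ contributes components to $\mathcal S(\mathcal F_t)_k$ for all $k\leq d+1$, and the ``top'' index $k=d+1$ is where $\mathcal S(\bar\sigma)_{d+1}$ is a single point (the barycenter of $\sigma$ alone), which is isolated in $\mathcal S(\mathcal F_t)_{d+1}$ precisely because $\sigma$ is maximal.

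First I would set up the correspondence between $\beta_0$ of a persistence module over $\N^{\op}\times T$ and births of components, using the chain-complex description $C\mathcal F$ from \cref{Sec:Chain_Complexes} together with \cref{Prop:Betti_Numbers_Semi_Filtrations_Other_Indices}: since $\subdiv$ is a semifiltration (in the second coordinate) one can read off $\beta_0(H_0)$ from a minimal presentation, and $\beta_0(H_0(\mathcal G))$ for a simplicial semifiltration $\mathcal G$ equals the number of pairs $(\text{index } (k,t),\ \text{component } C)$ such that $C$ is a component of $\mathcal G_{k,t}$ not in the image of $\mathcal G_{k+1,t}$ nor of $\mathcal G_{k,t'}$ for $t'$ immediately below $t$ in the relevant finite grid (using finite presentability to reduce to a grid). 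Next I would exhibit an explicit bijection between such ``new components'' of $\subdiv$ and the maximal simplices counted by $m_0$: to a maximal simplex $\sigma$ of $\mathcal F_t$ of dimension $d$, assign the isolated vertex $\{\sigma\}$ of $\mathcal S(\mathcal F_{\birth'}(\cdot))_{d+1}$, where one picks the minimal grid index at which $\sigma$ becomes maximal — here I would use $\birth(\sigma)$, which is well-defined since $\mathcal F$ is $1$-critical, and note that $\sigma$ is maximal at $\birth(\sigma)$ because if it had a coface it would have one already at its birth index. At the index $(d+1,\birth(\sigma))$ the complex $\mathcal S(\mathcal F_{\birth(\sigma)})_{d+1}$ consists exactly of the isolated barycenters of the $d$-and-higher-dimensional maximal simplices together with higher flags among non-maximal top simplices — the point $\{\sigma\}$ is a component here, and it is genuinely new: it cannot come from $(d+2,\birth(\sigma))$ since $\mathcal S(\mathcal F)_{d+2}$ omits $\sigma$ entirely, and it cannot come from an earlier $t$ since $\sigma\notin \colim \mathcal F$ before $\birth(\sigma)$.

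For the reverse direction, I would argue every new component of $\subdiv$ arises this way: given a component $C$ of $\mathcal S(\mathcal F_t)_k$ not in the image from $(k+1,t)$ or from below, the condition ``not from $(k+1,t)$'' forces $C$ to contain a flag whose minimum element has dimension exactly $k-1$ and which is not connected within $\mathcal S(\mathcal F_t)_k$ to any flag built from higher simplices — I claim this pins down a unique minimal simplex $\tau$ of dimension $k-1$, and the condition ``not from below'' combined with maximality considerations forces the relevant simplex to be maximal in $\mathcal F_t$. This is where I expect to do the most careful bookkeeping: tracking exactly when $\pi_0(\mathcal S(\mathcal F_t)_k)\to \pi_0(\mathcal S(\mathcal F_{t'})_k)$ and $\pi_0(\mathcal S(\mathcal F_t)_{k+1})\to\pi_0(\mathcal S(\mathcal F_t)_k)$ fail to be surjective, and matching the double-counting (a maximal simplex of dimension $d$ produces ``new'' components potentially at several values of $k$, but I should check that only $k=d+1$ yields a component that is new in \emph{both} directions, or more precisely that the total count still comes out to exactly $m_0$ — the cleaner route may be to directly establish the bijection $\{\text{new components}\}\leftrightarrow \Sigma$ rather than summing contributions). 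The main obstacle is thus the combinatorial analysis of $\pi_0$ of the truncated barycentric subdivisions and verifying the ``newness'' conditions are equivalent to ``$\sigma$ maximal, index $=\birth(\sigma)$, $k=\dim\sigma+1$'' — the contractibility lemmas (\cref{lem:spx-contract}, \cref{lem:bary-int}) and the nerve model $\mathcal{NF}$ should help, since via \cref{cor:nerve-srips-equiv} one may instead count new components of $\mathcal{NF}$, where a component of $(\mathcal{NF})_{k,t}$ corresponds to a connected set of maximal simplices sharing $\geq(k-1)$-dimensional intersections, making the isolated-vertex analysis at $k=\dim\sigma+1$ transparent.
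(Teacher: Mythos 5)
Your core idea is the same as the paper's: the isolated-barycenter statement you make (a maximal simplex $\sigma$ of dimension $d$ yields an isolated vertex $\hat\sigma$ of $\subdiv$ at index $(d+1,\birth(\sigma))$) is exactly the paper's Lemma~\ref{guacaroni}, and it carries the whole argument. The difference is one of framing: you propose to count ``new'' components of $\subdiv$ (those not containing any point from a strictly smaller index) and biject them with $\Sigma$, whereas the paper exhibits $\hat\Sigma=\{\hat\sigma:\sigma\in\Sigma\}$ as a generating set for $H_0(\subdiv)$ (every vertex $\hat\tau$ is adjacent at $\birth(\hat\tau)$ to the barycenter of a maximal coface of $\tau$) and then deduces minimality from the isolated-barycenter lemma. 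The two routes compute the same thing, since a minimal generating set projects bijectively onto the graded cokernel whose basis is precisely your ``new components''; the paper's route simply avoids explicitly characterizing the cokernel, which reads more cleanly. Two small points about your sketch: the double-counting you worry about does not arise --- $\{\hat\sigma\}$ is new only at $k=\dim\sigma+1$, since at smaller $k$ the vertex $\hat\sigma$ already lies in $\subdiv_{k+1,\birth(\sigma)}$ and so its component is hit from above --- and the reverse characterization you defer as ``careful bookkeeping'' is actually short: not being hit from $(k+1,t)$ forces every vertex of the component to correspond to a simplex of dimension exactly $k-1$, so the component has no edges, so it is a single isolated vertex, whose underlying simplex must therefore be maximal in $\mathcal F_t$ and born at $t$.
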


We prepare for the proof of the proposition with a lemma.  For a simplex $\sigma\in \mathcal F_t$ we let $\hat \sigma$ denote the vertex corresponding to $\sigma$ in $\Bary{\mathcal {F}_t}$.

\begin{lemma}\label{guacaroni}
If $\sigma \in \Sigma$, then $\hat{\sigma}$ is an isolated component of $\subdiv_{\birth(\hat \sigma)}$.
\end{lemma}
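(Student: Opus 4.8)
The statement is that for $\sigma \in \Sigma$ (a simplex that is maximal in some $\mathcal F_t$), the corresponding vertex $\hat\sigma$ of the barycentric subdivision is an isolated point (a connected component all by itself) in the simplicial complex $\subdiv_{\birth(\hat\sigma)}$. Here I need to first pin down what $\birth(\hat\sigma)$ is: the vertex $\hat\sigma$ of $\Bary{\mathcal F_t}$ corresponds to the flag $\{\sigma\}$ of length one whose unique (hence minimum) element is $\sigma$ itself; it lies in $\mathcal S(\mathcal F_t)_k$ precisely when $\dim\sigma \geq k-1$, i.e. for all $k \leq \dim\sigma + 1$ in $\N$. Since $\subdiv$ is $1$-critical (as $\mathcal F$ is), the bigrade $\birth(\hat\sigma)$ is the $\leq$-minimum bigrade $(k,t) \in \N^{\op}\times T$ at which $\hat\sigma$ appears, which is $(\dim\sigma+1,\ t_\sigma)$ where $t_\sigma = \birth(\sigma)$ is the minimal index with $\sigma \in \mathcal F_{t_\sigma}$; note $\sigma$ is still maximal in $\mathcal F_{t_\sigma}$ since being maximal is preserved as we pass to the birth index (a cofacet at $t_\sigma$ would be a cofacet at the later index $t$ where $\sigma$ is maximal, contradiction).

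The core of the argument is then a local computation inside $\subdiv_{(\dim\sigma+1,\ t_\sigma)} = \mathcal S(\mathcal F_{t_\sigma})_{\dim\sigma+1}$. A vertex of $\Bary{\mathcal F_{t_\sigma}}$ adjacent to $\hat\sigma$ corresponds to a simplex $\tau$ of $\mathcal F_{t_\sigma}$ with $\tau \subsetneq \sigma$ or $\sigma \subsetneq \tau$ (so that $\{\tau,\sigma\}$ or $\{\sigma,\tau\}$ is a flag, i.e. an edge of the subdivision). For such an edge to lie in $\mathcal S(\mathcal F_{t_\sigma})_{\dim\sigma+1}$, the minimum element of the corresponding flag must have dimension at least $\dim\sigma$. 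If $\tau \subsetneq \sigma$, the minimum element is $\tau$, with $\dim\tau < \dim\sigma$, so this edge is excluded. If $\sigma \subsetneq \tau$, then $\tau$ is a cofacet-containing simplex of $\sigma$ in $\mathcal F_{t_\sigma}$, contradicting maximality of $\sigma$ in $\mathcal F_{t_\sigma}$. Hence $\hat\sigma$ has no neighbors in $\subdiv_{\birth(\hat\sigma)}$, i.e. it is an isolated component.

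I do not anticipate a serious obstacle here; the only point requiring care is the bookkeeping around $1$-criticality and the claim that $\sigma$ remains maximal at its birth index $\birth(\sigma)$ — that is the one spot where the argument could silently go wrong if one is sloppy, so I would state it explicitly as the first step. Everything else is the elementary observation that in $\mathcal S(W)_k$ every simplex's minimum flag-element has dimension $\geq k-1$, applied with $W = \mathcal F_{\birth(\sigma)}$ and $k = \dim\sigma+1$, combined with the description of edges in a barycentric subdivision as strict containments. I would write the proof in three short beats: (1) identify $\birth(\hat\sigma) = (\dim\sigma+1, \birth(\sigma))$ and note $\sigma$ is maximal in $\mathcal F_{\birth(\sigma)}$; (2) enumerate possible neighbors of $\hat\sigma$ in $\Bary{\mathcal F_{\birth(\sigma)}}$ via strict containments; (3) rule out each type using the skeleton condition of $\mathcal S(-)_{\dim\sigma+1}$ and maximality, concluding $\hat\sigma$ is isolated.
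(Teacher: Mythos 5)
Your proof is correct and follows essentially the same approach as the paper: identify $\birth(\hat\sigma)=(\dim\sigma+1,\birth(\sigma))$, then rule out each possible neighbor $\hat\tau$ via the flag description of edges in the barycentric subdivision, using the dimension condition in one case and maximality in the other. You are slightly more careful than the paper in spelling out why $\sigma$ is already maximal at its birth index $\birth(\sigma)$ (the paper's "contradicting the assumption that $\sigma\in\Sigma$" leaves this one-line inference implicit), but the argument is otherwise identical.
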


\begin{proof}
Let $\birth (\hat \sigma) = (k,z)$.  Note that $k=\dim \sigma + 1$ and that $z$ is the minimum index such that  $\sigma \in\mathcal F_z$.  Seeking a contradiction, suppose that $\hat \sigma$ belongs to some $1$-simplex $[\hat \sigma,\hat \tau] \in \subdiv_{k,z}$.    We have $\subdiv_{k,z}\subset \mathcal F^+_z$, so by the definition of barycentric subdivision, simplices in $\subdiv_{k,z}$ correspond to flags in $\mathcal F_z$.  Thus,  either $\tau \subset \sigma$ or vice-versa.  If $\tau \subset \sigma$, then $\dim \tau < \dim \sigma=k-1$, implying that  $\hat \tau\not\in  \subdiv_{k,z}$, a contradiction.    If $\sigma \subset \tau$, then $\sigma$ is not maximal in $\mathcal F_z$, contradicting the assumption that $\sigma\in \Sigma$. 
\end{proof}

\begin{proof}[Proof of \cref{Prop_H0_Bound}]
Let $\hat \Sigma=\{\hat \sigma\mid \sigma\in \Sigma\}$.  Identifying each  $\hat\sigma\in \hat \Sigma$ with its corresponding element in $H_0(\subdiv)_{\birth(\hat\sigma)}$, we will show that $\hat \Sigma$ is a minimal set of generators for $H_0(\subdiv)$.  To see that $\hat \Sigma$ generates $H_0(\subdiv)$, it suffices to show that for any vertex $\hat \tau\in \colim \mathcal F^+$, either $\hat\tau\in \hat \Sigma$ or there exists a 1-simplex $[\hat \sigma,\hat \tau]\in  \subdiv_{\birth(\hat \tau)}$ with $\hat \sigma\in \hat \Sigma$.  Suppose that $\hat\tau\not\in \hat \Sigma$, and let $\sigma$ be a maximal coface of $\tau$ in $\mathcal F_{\birth(\tau)}$.  Note that $\sigma$ properly contains $\tau$ and that $\sigma\in \Sigma$.  We have $\birth(\tau)=\birth(\sigma)$ and $\dim \sigma>\tau$, which implies that $[\hat \sigma,\hat \tau]\in  \subdiv_{\birth(\hat \tau)}$.  This shows that $\hat \Sigma$ generates $H_0(\subdiv)$.  \cref{guacaroni} now implies that $\hat \Sigma$ in fact minimally generates $H_0(\subdiv)$.  
\end{proof}

\begin{proof}[Proof of \cref{Cor:Thm_Generalization}\,(ii)]
 Let $\mathcal F'$ be a simplicial model of $\subdiv$, and let $\mathcal{F}'^{\,0}$ denote the $0$-skeleton of $\mathcal F'$. By definition, we have \[|\mathcal{F}'|\geq |\mathcal{F}'^{\, 0}|\geq \beta_0( \Ch{0}{\mathcal F'}),\] where $|\cdot|$ denotes size. There is a canonical minimal generating set for $\Ch{0}{\mathcal F'}$, namely, the set
 \[
 Z:=\bigsqcup_{z \in \N^{\mathrm{op}}\times T} \{ \sigma \in \mathcal{F}'^{\, 0}_z \mid \sigma \notin \Img \mathcal{F'}_{y,z} \text{ for all } y < z \}.
 \]
Since $\Ho{0}\mathcal F'$ is a quotient of $\Ch{0}{\mathcal F'}$, a subset of $Z$ descends to a minimal generating set for $\Ho{0}\mathcal F'$, so $\beta_0( \Ch{0} {\mathcal F'})\geq \beta_0( \Ho{0}\mathcal F')$.  Since homology is invariant under weak equivalence, we have that $\Ho{0}\mathcal F' \cong H_0(\subdiv)$, so $\beta_0( \Ho{0}\mathcal F' )= \beta_0( H_0(\subdiv))$.  Thus, we have 
    \[
   |\mathcal{F}'|\geq  \beta_0( \Ch{0} {\mathcal F'}) \geq \beta_0( \Ho{0}\mathcal F' )= \beta_0( H_0(\subdiv)) = m_0,
    \]
    where the last equality follows from \Cref{Prop_H0_Bound}.
\end{proof}

\section{Approximation via Intrinsic Subdivision-\v{C}ech}\label{sec:intrinsic-cech}
We next prove \cref{cor:intro-3}, which concerns poly-size approximation of the subdivision-Rips bifiltration of an arbitrary finite metric space.  Recall the statement:
\begin{repcorollary}{cor:intro-3}\mbox{}
    \begin{itemize}
 \item[(i)] For any finite metric space $X$, there exists a simplicial $\sqrt{2}$-approximation $\mathcal J(X)$ to $\srips(X)$ whose $k$-skeleton has size $O(|X|^{k+2})$.   
 \item[(ii)]  There exists an infinite family of finite metric spaces $X$ such that for any $\delta\in [1,\sqrt{2})$, there is no $\delta$-approximation to $\srips(X)$ with size polynomial in $|X|$.
     \end{itemize}
\end{repcorollary}

Define $\icech^\prime (X)\colon \T\to \Simp$ by 
    \[
    \icech^\prime(X)_{r} \coloneqq \icech(X)_{\sqrt{2}r},
    \]
where $\mathcal I(X)$ is the intrinsic \v Cech filtration of \cref{Sec:Rips_Cech}.  In \cref{cor:intro-3}\,(i), we take $\mathcal J(X)\coloneqq \mathcal{NI}'(X)$.  

For the next lemma, recall the definition of $m_k$ from \cref{Notation:m_k}.
\begin{lemma}\label{lem:cech-simplices}
For any finite metric space $X$ and $k\geq 0$, $m_k(\icech(X))\leq |X|^{k+2}$.
\end{lemma}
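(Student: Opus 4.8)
The plan is to exploit the special structure of maximal simplices in an intrinsic \v Cech filtration. First I would record the structural fact: for a finite metric space $X$, every simplex of $\icech(X)_r$ is contained in $B(z)_r\cap X$ for some $z\in X$ — any point of the common intersection is such a $z$, and it lies in $X$ because the filtration is \emph{intrinsic} — and $B(z)_r\cap X$ is itself a simplex of $\icech(X)_r$, witnessed by $z$. Consequently, every simplex maximal in some $\icech(X)_r$ has the form $B(z)_r\cap X$, and moreover, if $\sigma$ is maximal in $\icech(X)_t$ then it is already maximal in $\icech(X)_r$ for the smaller radius $r=\max_{x\in\sigma} d_X(z,x)$, so we may take $r$ to be a distance realized in $X$.

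The heart of the argument is to upgrade this to a statement about a whole set $S$ of simplices that are \emph{simultaneously} maximal at one scale. Suppose $S=\{\sigma_1,\dots,\sigma_j\}$ with $j\le k+1$ and each $\sigma_i$ maximal in $\icech(X)_t$ for a common $t$. Choosing a witness $z_i\in X$ for each $\sigma_i$ (canonically, via a fixed well-order on $X$) gives $\sigma_i=B(z_i)_t\cap X$, and I would set $\rho_i=\max_{x\in\sigma_i}d_X(z_i,x)$, so that already $\sigma_i=B(z_i)_{\rho_i}\cap X$. Let $\rho=\max_i\rho_i$, attained at some index $i^\ast$, and let $w^\ast\in\sigma_{i^\ast}$ realize $\rho_{i^\ast}$, so $\rho=d_X(z^\ast,w^\ast)$ with $z^\ast=z_{i^\ast}$. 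The key observation is that $\rho_i\le\rho\le t$ forces $B(z_i)_{\rho_i}\cap X\subseteq B(z_i)_\rho\cap X\subseteq B(z_i)_t\cap X$, i.e. $\sigma_i=B(z_i)_\rho\cap X$ \emph{for the single common radius} $\rho=d_X(z^\ast,w^\ast)$. Thus $S$ is completely determined by the pair $(z^\ast,w^\ast)\in X\times X$ together with the center set $\{z_1,\dots,z_j\}$, whose elements are pairwise distinct since distinct $\sigma_i$ cannot share a center at a fixed radius.

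This yields an explicit injection from the collection of such $S$ (with $1\le|S|\le k+1$, which is all that \cref{Notation:m_k} counts) into $X^{k+2}$: send $S$ to $(w^\ast,z^\ast,z_{i_2},\dots,z_{i_j})$, ordering the remaining centers by the fixed total order on simplices, and pad the tuple to length $k+2$ with copies of $z^\ast$. Since no genuine center other than $z^\ast$ equals $z^\ast$, the padding is unambiguously recognizable, so from the tuple one recovers $\rho=d_X(w^\ast,z^\ast)$, the center set, and hence $S=\{\,B(z)_\rho\cap X\mid z\text{ a center}\,\}$; the map is therefore injective. Counting gives $m_k(\icech(X))\le|X|^{k+2}$.

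I expect the main obstacle to be the second paragraph: showing that the several simplices in $S$ can be realized with a \emph{common} effective radius $\rho$ that is itself a single pairwise distance in $X$. Without this collapse, each $\sigma_i$ would seem to cost a pair (center, radius) $\in X\times X$, yielding only the far weaker bound $|X|^{2(k+1)}$; the monotonicity inclusions $B(z_i)_{\rho_i}\cap X\subseteq B(z_i)_\rho\cap X\subseteq B(z_i)_t\cap X$ are exactly what merge the $j$ separate radii into one. The remaining bookkeeping — the padding that makes the image land in $X^{k+2}$ on the nose rather than in a disjoint union of the $X^{j+1}$, which relies on pairwise-distinctness of the centers — is routine.
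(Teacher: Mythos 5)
Your proof is correct, and it shares the paper's essential observation — that every simplex maximal in $\icech(X)_r$ is a metric ball $B(z)_r\cap X$ for some $z\in X$, so maximal simplices can be described by (center, radius) pairs with both coordinates drawn from $X$ — but you handle the radius degree of freedom by a genuinely different device. The paper fixes a $(k+1)$-set of centers and notes that as $r$ increases each ball $B(x_i)_r$ changes at most $|X|-1$ times, so the tuple of balls takes at most $(k+1)|X|$ distinct values; multiplying by $\binom{|X|}{k+1}\leq |X|^{k+1}/(k+1)$ gives $|X|^{k+2}$, with the factors of $k+1$ cancelling. You instead construct an explicit injection into $X^{k+2}$: the key is your observation that a \emph{single} effective radius $\rho=d_X(z^\ast,w^\ast)$, realized as one pairwise distance, suffices to recover \emph{all} simplices in $S$ from their (pairwise distinct) centers, collapsing $j$ nominal radii to one encoded pair. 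This is cleaner in two respects: the injection makes the $|X|^{k+2}$ count transparent rather than a product of two estimates, and the $z^\ast$-padding trick honestly accounts for sets with $|S|<k+1$, a case the paper's write-up glosses over by tacitly treating only $(k+1)$-sets. Both arguments are valid and give exactly the bound claimed.
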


\begin{proof}
A simplex $\sigma\in \icech(X)_r$ is maximal in $\icech(X)_r$ only if there is some point $x\in X$ such that $\sigma=B(x)_r$, i.e., $\sigma$ is the closed ball of radius $r$ in $X$ centered at $x$. 
We call $x$ a \emph{witness} of $\sigma$.  For $S=\{\sigma_1,\ldots,\sigma_k\}$ a set of maximal simplices in $\icech(X)_r$, we say that 
 $\{x_1,\ldots,x_k\}$ \emph{witnesses} $S$ if each $x_i$ witnesses $\sigma_i$.  Call any set with $j$ elements a \emph{$j$-set}. 
 
 For each $x\in X$, there are at most $|X|$ balls in $X$ centered at $x$.  Thus, as $r$ varies, each $(k+1)$-set in $X$ witnesses a total of at most $(k+1)|X|$ distinct $(k+1)$-sets of maximal simplices.  The number of $(k+1)$-sets contained in $X$ is ${{|X|}\choose{k+1}}$.  Since \[{{|X|}\choose{k+1}}\leq \frac{|X|^{k+1}}{k+1},\] it follows that $m_k(\icech(X))\leq |X|^{k+2}$.
\end{proof}

\begin{proof}[Proof of \cref{cor:intro-3}\,(i)]
Since the metric on $X$ satisfies the triangle inequality, for each $r \in\T$ we have
    \[
    \icech(X)_r \subset \rips (X)_r \subset \icech(X)_{2r}, 
\]
or equivalently, 
    \[
    \icech'(X)_{r/\sqrt{2}} \subset \rips (X)_r \subset \icech'(X)_{\sqrt{2} r}. 
\]
Hence $\icech^\prime (X)$ and $\rips (X)$ are $\sqrt{2}$-interleaved.  \cref{Prop:Inerleaving_and_Subdivision} then implies that  $\mathcal S\icech^\prime (X)$ and $\mathcal{SR} (X)$ are $\sqrt{2}$-interleaved as well.  

Since $\icech'(X)$ is a rescaling of $\icech(X)$, \cref{lem:cech-simplices} implies that $m_k(\icech'(X))=O(|X|^{k+2})$.   Applying \cref{Cor:Thm_Generalization}\,(i) with $\mathcal F=\icech'(X)$ yields the result. \end{proof}

We now turn to the proof of \cref{cor:intro-3}\,(ii).  

\begin{definition}\label{Def:Cocktail_Party_Graph} 
For $n\in \N$, the  \emph{$n^{\textit{th}}$ cocktail party graph} $C_n$ is the graph with vertex set $\{1,\ldots,2n\}$, containing the edge $[i,j]$ if and only if $|i-j|\ne n$.
\end{definition}
 See \cref{fig:turan} for an illustration of $C_3$.  Cocktail party graphs are also known as \emph{octahedral graphs}, among other names; see \cite{weisstein} and references therein. Note that $C_n$ has exactly $2^n$ maximal cliques, formed by choosing one vertex from each of the $n$ independent pairs $\{\{i,i+n\} \mid 1\leq i\leq n\}.$ 

 \begin{figure}[t]
\centering
    \includegraphics[width=0.25\textwidth]{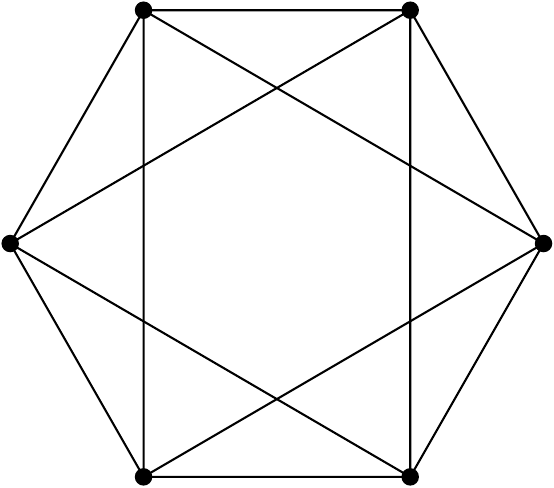}
    \caption{The cocktail party graph $C_3$.}
    \label{fig:turan}
\end{figure}

 \begin{proof}[Proof of \Cref{cor:intro-3}\,(ii)]
 For $n \in \N$, let $X$ denote the vertex set of $C_n$, viewed as a metric space with the metric on $X$ equal to twice the shortest path metric on $C_n$.  Let  $\delta \in [1, \sqrt 2)$ and let 
 \[
 \mathcal F\colon \mathbb N^{\op}\times \T\to \Simp
 \]
 be a functor that is $\delta$-homotopy interleaved with $\srips(X)$.  To establish the result, it suffices to show that $\mathcal F$ has size exponential in $|X|=2n$.
 
 Note that $\dim H_0(\srips(X))_{n,1}=2^n$; indeed, each of the $2^n$ maximal cliques in the neighborhood graph $\mathcal{G}(X)_{1}=C_n$ forms a maximal $(n-1)$-simplex in $\rips(X)_{1}$, giving $2^n$ isolated vertices in $\srips(X)_{n,1}$ by \cref{guacaroni}. Furthermore, the induced map \[H_0(\srips(X))_{n,1} \longrightarrow H_0(\srips(X))_{n,\delta^2}\] is an equality, since $\rips(X)_1 = \rips(X)_{\delta^2}$. Since $\srips(X)$ and $\mathcal F$ are $\delta$-homotopy interleaved, this equality factors through $H_0(\mathcal F)_{n, \delta}$ as
    \begin{center}
\begin{tikzcd}
{H_0(\srips(X))_{n,1}} \arrow[rd] \arrow[rr, "="] &                                        & {H_0(\srips(X))_{n,\delta^2}} \\
                                                   & H_0(\mathcal F)_{n, \delta} \arrow[ru] &                          
\end{tikzcd}
    \end{center}
    Thus  $\dim H_0(\mathcal F)_{n,\delta}\geq 2^n$, implying that $\mathcal F$ has size exponential in $|X|$.
\end{proof}

We conclude this section with two general remarks giving context for our use of cocktail party graphs.

\begin{remark}
Cocktail party graphs are examples of \emph{Tur\'an graphs}, which occur frequently in extremal graph theory \cite{aignerTuranGraph1995}.     In brief, for $n \in \N$ and ${1 \leq r \leq n}$, the \emph{Tur\'an graph} $T(n,r)$ is the complete $r$-partite graph whose partite sets are as close to equal cardinality as possible.  For example, $T(2n,n)$ is the cocktail party graph $C_n$, while $T(n,1)$ is the graph with $n$ vertices and no edges, and $T(n,n)$ is the complete graph $K_n$. The $n$-vertex graph with the largest possible number of maximal cliques is given by the Tur\'an graph $T(n, \lceil n/3 \rceil)$ \cite[Theorem ~$1$]{moonCliquesGraphs1965}.
\end{remark}

\begin{remark}
A class of graphs $\Omega$ is said to have \emph{few cliques} if the number of maximal cliques of any graph $G=(V,E)$ in $\Omega$ is polynomial in $|V|$ \cite{prisnerGraphsFew1995a, rosgenComplexityResults2007}.  
In view of  \Cref{thm:main-theorem-intro}\,(i), to understand when the subdivision bifiltrations of a class of clique filtrations have poly-sized skeleta, one wants to identify conditions under which a class of graphs has few cliques.  This is well studied.  Graph classes known to have few cliques include trees, chordal graphs, planar graphs \cite[Corollary ~$2$]{woodMaximumNumber2007}, graphs with bounded degree \cite[Theorem~3]{woodMaximumNumber2007} or bounded degeneracy \cite[Theorem~3]{eppsteinListing10a}, and intersection graphs of convex polytopes with a bounded number of facets \cite[Theorem~15]{brimkovHomotheticPolygons2018}.  This last class of graphs will be of particular relevance to us in \Cref{sec:Lp-spaces}. 

We have seen above that the class of cocktail party graphs does not have few cliques.  Moreover, it is shown in \cite{farberUpper93} that this observation has the following satisfying converse:  Let $\Omega$ be a class of graphs for which there exists a constant $n_0$ such that for all $n>n_0$, no graph $G\in\Omega$ has a vertex-induced subgraph isomorphic to $C_n$.  Then  $\Omega$ has few cliques.  
\end{remark}

\section{Subdivision-Rips Bifiltrations in $\ell_p$ Space}\label{sec:Lp-spaces}\label{sec:taxicab}

\subsection{Data in $\R^2$}

We next prove \cref{cor:intro-plane}.  First, we recall the statement:

\begin{repcorollary}{cor:intro-plane}\mbox{}
\begin{itemize}
    \item[(i)] For $X$ a finite set of uniformly spaced points on the unit circle, any simplicial model of $\srips(X)$ has exponential size in $|X|$.
    \item[(ii)] Let $X$  be a finite i.i.d.\ sample of the uniform distribution on the unit square.   There exists a constant $C > 0$ such that
 for any simplicial model $\mathcal F(X)$ of $\srips(X)$,    \[
    \operatorname{Pr}[\textup{size of } \mathcal F(X) \textup{ is at least }\exp(C|X|^{1/3})] \rightarrow 1
    \]
    as $|X| \rightarrow \infty$.
\end{itemize}
\end{repcorollary}

Recall from \cref{Sec:Rips_Cech} that we denote the $r$-neighborhood graph of a metric space $X$ as $\mathcal G(X)_r$.  For the proof of \Cref{cor:intro-plane}\,(ii), we will use a result of  Yamaji \cite{yamajiNumberMaximal2023a} about the number of maximal cliques in random geometric graphs:

\begin{proposition}[\cite{yamajiNumberMaximal2023a}, Theorem $1.1$]\label{theo:geo-graph-cliques}
    For fixed $r\in (0,1/2)$, let $X$ be a finite, i.i.d. sample of the uniform distribution on $[0,1]^2$, and let $\kappa_r$ denote the number of maximal cliques in $\mathcal G(X)_r$.  There exists a constant $C>0$ such that 
    \[
    \operatorname{Pr}[\kappa_r \geq \exp(C|X|^{1/3})] \rightarrow 1
    \]
    as $|X| \rightarrow \infty$.\end{proposition}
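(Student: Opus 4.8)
\emph{Overview.} Since the claim is a high-probability lower bound on $\kappa_r$, it suffices to show that with probability tending to $1$ the graph $\mathcal G(X)_r$ contains an induced subgraph $H$ on $\Theta(|X|^{1/3})$ vertices having $2^{\Omega(|X|^{1/3})}$ maximal cliques. I would organize the proof around three ingredients: a \emph{robust geometric gadget}, an \emph{occupancy argument} placing a rescaled copy of the gadget inside the random sample, and a \emph{lifting lemma} transferring maximal cliques from $H$ up to $\mathcal G(X)_r$.

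\emph{The gadget.} For a parameter $t$, I would construct a configuration of $\Theta(t)$ points in a bounded planar region whose unit-disk graph at threshold $r$ has at least $2^{\Omega(t)}$ maximal cliques and whose adjacency pattern is unchanged when every point is moved within a ball of radius $\eta_t$. The simplest candidate is a cocktail-party configuration $C_t$: place $2t$ points equally spaced on a circle of radius $h$ with $r/2 < h \le r/(2\cos(\pi/(2t)))$, so that the non-edges are exactly the $t$ antipodal pairs; this gives $2^t$ maximal cliques (one vertex per pair), and the adjacency pattern survives perturbations of size $\eta_t = \Theta(r t^{-2})$, the slack common to the two opposing constraints $2h>r$ and $2h\cos(\pi/(2t))\le r$. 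Equivalently one realizes $C_t$ via $t$ pairs $\{\pm h u_i\}$ along $t$ equally-spaced directions. The decisive quantitative point is the relation between $t$ and the robustness radius $\eta_t$: to push $t$ up to order $|X|^{1/3}$ one needs a gadget whose $\eta_t$ stays as large as $\Theta(r\,t^{-3/2})$ (equivalently, $\Theta(|X|^{-1/3})$ up to logarithmic factors), and for this the naive cocktail-party family is not efficient enough.

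\emph{Occupancy and lifting.} Set $t = \lfloor c\,|X|^{1/3}\rfloor$, rescale the gadget by a constant so it lies in $[0,1]^2$, and surround each of its $\Theta(t)$ points by a pairwise-disjoint ``slot'' disk of radius $\Theta(\eta_t)$ (possible since $\eta_t$ is far below the gadget's minimum interpoint distance). Each slot has area $\Theta(\eta_t^2)$, so the probability that a fixed slot is empty of points of $X$ is at most $\exp(-\Theta(|X|\,\eta_t^2))$; a union bound over the $\Theta(t)$ slots shows that with probability tending to $1$ every slot is occupied, provided $|X|\,\eta_t^2 \gg \log t$, which a suitable $c$ arranges once $\eta_t \gtrsim |X|^{-1/3}$ up to logs. (A second-moment computation on the number of gadget-configurations found among the sample gives the same conclusion.) Picking one point of $X$ per slot, robustness of the gadget makes their induced subgraph $H \subseteq \mathcal G(X)_r$ isomorphic to the gadget. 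Finally, for each maximal clique $Q$ of $H$ choose a maximal clique $\widehat Q$ of $\mathcal G(X)_r$ with $Q \subseteq \widehat Q$; since $H$ is an induced subgraph, $\widehat Q \cap V(H)$ is a clique of $H$ containing the maximal clique $Q$, hence equals $Q$, so $Q \mapsto \widehat Q$ is injective. Thus $\kappa_r$ is at least the number of maximal cliques of $H$, namely $2^{\Omega(t)} = \exp(\Omega(|X|^{1/3}))$, with probability tending to $1$; this is the claim with $C$ the implied constant.

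\emph{Main obstacle.} The occupancy and lifting steps are routine; the real difficulty is entirely in the first part of ``The gadget''. The cocktail-party family, with robustness only $\Theta(r t^{-2})$, yields merely the weaker bound $\exp(\Omega(|X|^{c}))$ for some $c<1/3$, and reaching the stated exponent requires a cleverer spread-out configuration whose ``conflict graph'' (the complement of the unit-disk graph on the gadget vertices) still has exponentially many maximal independent sets while tolerating perturbations large enough to support $t=\Theta(|X|^{1/3})$. Carrying this out---and, for the complementary upper bound that $1/3$ is optimal (not needed here), bounding the number of distinct maximal cliques via a covering/entropy estimate on the arrangement of radius-$r$ circles through the sample points---is the substance of Yamaji's argument.
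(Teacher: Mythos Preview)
The paper does not prove this statement at all: it is quoted verbatim as Theorem~1.1 of Yamaji and used as a black box in the proof of Corollary~1.6(ii). There is therefore no ``paper's own proof'' to compare against.

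As for your sketch itself: the occupancy argument and the lifting lemma are fine and standard, and you are candid that the entire content lies in the gadget step. But note that what you have actually written is not a proof---you correctly observe that the cocktail-party configuration only has robustness $\Theta(rt^{-2})$, which yields an exponent strictly smaller than $1/3$, and you then simply assert that ``a cleverer spread-out configuration'' with robustness $\Theta(rt^{-3/2})$ exists, deferring its construction to Yamaji. That is precisely the heart of the theorem, so your proposal is an accurate roadmap with the main lemma left unproved rather than a self-contained argument. If you intend this as an exposition of where the difficulty lies, it succeeds; if you intend it as an independent proof, the gap is the missing gadget construction achieving the required robustness--size tradeoff.
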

A corresponding upper bound on $\kappa_r$ is also given in \cite{yamajiNumberMaximal2023a}, but is not needed here.

\begin{proof}[Proof of \Cref{cor:intro-plane}]
To prove (i), we adapt an example from the proof of \cite[Theorem ~$1$]{guptaMaximalCliques2005a}: For $n \in \N$, let $X_n$ be $2n$ points uniformly spaced on the unit circle $S^1 \subset \R_2^2$. Observe that for $\delta > 0$ sufficiently small, the neighborhood graph $\mathcal G(X_n)_{1-\delta}$ is isomorphic to the cocktail party graph $C_n$ (\Cref{Def:Cocktail_Party_Graph}). Thus, $\mathcal G(X_n)_{1-\delta}$ has $2^n$ maximal cliques; see \Cref{fig:circle} for an illustration of the case $n=4$.  The result now follows from \cref{Cor:Thm_Generalization}\,(ii).

Item (ii) is immediate from \cref{Cor:Thm_Generalization}\,(ii) and \Cref{theo:geo-graph-cliques}.
\end{proof}

\begin{figure}[t]
    \centering
    \includegraphics[width=0.25\textwidth]{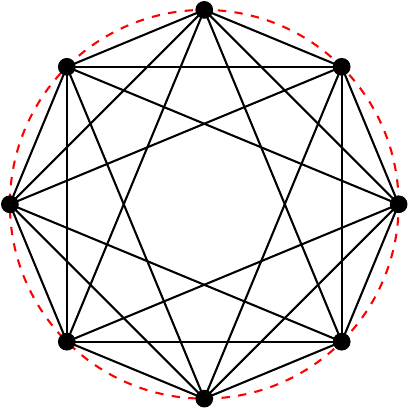}
    \caption{The neighborhood graph $N(X_4)_{1-\delta}$.}
    \label{fig:circle}
\end{figure}

\subsection{Data in $\ell_p$ Space}
\label{Sec:Lp-Approximation}

Next, we prove \cref{cor:intro-2}, which concerns the subdivision bifiltrations of data in $\R^d$ with the $\ell_p$-metric.  Recall the statement:

\begin{repcorollary}{cor:intro-2}
 Fixing $d\in \N$, let $X\subset \R^d_p$.
    \begin{itemize}
        \item [(i)] If $d=1$ or $p\in \{1, \infty\}$, then $\mathcal{NR}(X)$ is a simplicial model of $\srips(X)$ whose $k$-skeleton has size $O(|X|^{\alpha(k+1)+2})$, where 
        \[\alpha=\alpha(d,p)=\begin{cases}
        d &\text{  for  $p=\infty$ or $d=1$ or $(d,p)=(2,1)$},\\

        d2^{d^2-1} &\text{ for $p=1$ and $d>2$}. 
        \end{cases}\]
        \item [(ii)] For any $\eps \in (0,1)$, there exists a $(1+\eps)$-approximation to $\srips(X)$ whose $k$-skeleton has size $O(|X|^{\beta (k+1)+2}),$ where 
        $\beta = c_d \eps ^{-(d^2-1)/2}$ for some constant $c_d$ depending on $d$.  
        \end{itemize}
\end{repcorollary}

For the proof, we will need the following definition:

\begin{definition}[\cite{mckeeTopicsIntersection1999}]
 For a collection of non-empty sets $U$, the \emph{intersection graph of $U$}, denoted $\Gamma_{U}$, is the 1-skeleton of the nerve of $U$.  That is, $\Gamma_{U}$ is the graph with vertex set $U$ and an edge $[A,B]$ if and only if $A \cap B \neq \emptyset$.
\end{definition}

For $p\in [1,\infty]$, $r\in [0,\infty)$, and $x\in \R^d$, let \[B(x)^p_r=\{y\in \R^d\mid \|y-x\|_p\leq r\}.\]
\begin{lemma}\label{Lem:Nbhd_Intersection_Graph}
For $p\in [1,\infty]$ and $X\subset \R^d_p$, we have $\mathcal{G}(X)_r=\Gamma_{U_r}$, where \[U_r=\{B(x)_r^p \mid x \in X\}.\]  
\end{lemma}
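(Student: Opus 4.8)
The plan is to unwind both sides of the claimed equality $\mathcal{G}(X)_r = \Gamma_{U_r}$ directly from the definitions, observing that both are graphs on the same vertex set $X$ (identifying each $x \in X$ with the ball $B(x)^p_r$, which is legitimate since distinct points give distinct balls). So it suffices to check that the two graphs have the same edges.

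First I would recall the definitions: by \cref{Sec:Rips_Cech}, $[x,y]$ is an edge of $\mathcal{G}(X)_r$ if and only if $\|x-y\|_p = d_X(x,y) \leq 2r$; and by the definition of the intersection graph, $[B(x)^p_r, B(y)^p_r]$ is an edge of $\Gamma_{U_r}$ if and only if $B(x)^p_r \cap B(y)^p_r \neq \emptyset$. So the content of the lemma is the elementary geometric fact that in $\R^d_p$, two closed $\ell_p$-balls of radius $r$ centered at $x$ and $y$ intersect if and only if $\|x-y\|_p \leq 2r$.

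For the forward direction of this equivalence, if $\|x-y\|_p \leq 2r$, then the midpoint $m = \tfrac{1}{2}(x+y)$ satisfies $\|m - x\|_p = \tfrac{1}{2}\|x-y\|_p \leq r$ and likewise $\|m-y\|_p \leq r$, so $m \in B(x)^p_r \cap B(y)^p_r$. For the converse, if there exists $z \in B(x)^p_r \cap B(y)^p_r$, then by the triangle inequality for $\|\cdot\|_p$ we get $\|x-y\|_p \leq \|x-z\|_p + \|z-y\|_p \leq r + r = 2r$. This handles all $p \in [1,\infty]$ uniformly, including $p = \infty$, since $\|\cdot\|_p$ is a genuine norm in every such case. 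The case $r = 0$ is covered too: balls of radius $0$ are singletons, which intersect iff $x = y$ iff $\|x-y\|_p = 0 \leq 0$.

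I do not anticipate any real obstacle here — the statement is essentially a definitional bookkeeping lemma combined with the midpoint/triangle-inequality observation, and the only thing to be slightly careful about is confirming that the edge conditions match up with the factor of $2$ (edges in $\mathcal{G}(X)_r$ use the threshold $2r$, which is exactly the diameter of an $\ell_p$-ball of radius $r$), and that passing from points to balls is a bijection on vertex sets. I would write this up in a few lines.
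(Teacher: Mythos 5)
Your proposal is correct and follows essentially the same route as the paper's proof: identify the vertex sets via $x \leftrightarrow B(x)^p_r$, then characterize edges in both graphs by the condition $\|x-y\|_p \le 2r$, using the midpoint $\tfrac12(x+y)$ for the forward implication and the triangle inequality for the converse. Your writeup is, if anything, a bit cleaner than the paper's (which has the roles of $r\le$ and $r>$ inadvertently swapped in its concluding sentence), and the explicit remarks on the vertex bijection and the $r=0$ case are sound but unnecessary details.
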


\begin{proof}
Consider $x,y\in \R^d$ and let $a=\frac{1}{2}(x+y)$.  Then since $\|\cdot\|_p$ is a norm, \[\|x-a\|_p=\|y-a\|_p=\frac{1}{2}\|x-y\|_p.\] 
By the triangle inequality for the $\ell_p$-metric, there exists no $z\in \R^d$ such that \[\max(\|x-z\|_p,\|y-a\|_p)<\frac{1}{2}\|x-y\|_p.\] Thus, if $r\leq \frac{1}{2}\|x-y\|_p$, then $[x,y]$ is contained in both $\mathcal{G}(X)_r$ and $\Gamma_{U_r}$, while if 
$r> \frac{1}{2}\|x-y\|_p$, then $[x,y]$ is contained in neither $\mathcal{G}(X)_r$ nor  $\Gamma_{U_r}$.
\end{proof}

Our main tool for proving \cref{cor:intro-2} is a bound of Brimkov et al.~\cite{brimkovHomotheticPolygons2018} on the number of maximal cliques in intersection graphs of convex polytopes:

\begin{proposition}[{\cite[Theorem 15]{brimkovHomotheticPolygons2018}}]\label{prop:brimkov}
If $G$ is the intersection graph of $n$ convex polytopes in $\R^d$ whose facets are parallel to at most $k$ hyperplanes, then the number of maximal cliques of $G$ is at most $n^{dk^{d+1}}.$
\end{proposition}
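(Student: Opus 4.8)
The plan is to reduce the statement to a counting problem about axis-parallel boxes, settle that core case with the one-dimensional Helly theorem, and then upgrade to the general case by encoding each maximal clique using a bounded amount of combinatorial data drawn from the arrangement of the defining hyperplanes.

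First I would fix normals $a_1,\dots,a_k$ for the $k$ hyperplane directions and form the linear map $\phi\colon\R^d\to\R^k$, $\phi(x)=(a_1^{\top}x,\dots,a_k^{\top}x)$. Since each $P_i$ is an intersection of slabs with these normals, $P_i=\phi^{-1}(B_i)$ for an axis-parallel box $B_i=\prod_{j}[l_{ij},u_{ij}]\subseteq\R^k$ with possibly infinite endpoints. Writing $L=\im\phi$, a linear subspace of dimension $d'\le d$, and using that $\phi^{-1}(S)\ne\emptyset$ iff $S\cap L\ne\emptyset$, the intersection graph $G$ is precisely the intersection graph of the polytopes $\{\,B_i\cap L\,\}_i$ inside $L$.

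The core case is $L=\R^k$ (which holds, e.g., when the $a_j$ are linearly independent), i.e.\ $n$ axis-parallel boxes in $\R^k$; here I would prove the sharper bound $n^k$. If $C$ is a maximal clique then for each coordinate $j$ the intervals $\{[l_{ij},u_{ij}]\}_{i\in C}$ pairwise intersect, so by the one-dimensional Helly theorem they share the point $p_j:=\max_{i\in C}l_{ij}$ (one checks $p_j\le u_{ij}$ for every $i\in C$ using any pairwise intersection). Then $q:=(p_1,\dots,p_k)$ lies in every $B_i$ with $i\in C$, so $\{i:q\in B_i\}$ is a clique containing $C$, whence $C=\{i:q\in B_i\}$ by maximality. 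As each $p_j$ is one of at most $n$ left endpoints, there are at most $n^k\le n^{d}$ maximal cliques.

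The general case is where I expect the real difficulty. The obstruction is that a clique of $\{B_i\cap L\}$ need not arise from a common point of $L$: pairwise-intersecting convex polytopes in dimension $\ge 2$ may have empty total intersection, so a maximal clique cannot be pinned down by a single witness point. I would still recover, as above, a point $q\in\bigcap_{i\in C}B_i\subseteq\R^k$, which only gives $C\subseteq C_q:=\{i:q\in B_i\}$ and exhibits $C$ as a maximal \emph{pairwise-$L$-intersecting} subfamily of $C_q$. To locate $C$ inside $C_q$, I would apply Helly's theorem in $L$ (dimension $\le d$): each failure of common intersection among members of $C_q$ is certified by a subfamily of size $\le d+1$, and iterating over these obstructions lets one encode $C$ by $q$ together with a bounded collection of auxiliary witnesses — each chosen to be a vertex of the arrangement of the $\le 2kn$ facet-hyperplanes of the $P_i$, of which there are at most $\binom{k}{d}n^{d}\le k^{d}n^{d}$ since the normals lie in only $k$ directions — plus a bounded rule recording which polytopes contain which witnesses. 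Counting the $\le n^k$ choices of $q$ against roughly $k^{d+1}$ auxiliary witnesses, each ranging over $\le k^{d}n^{d}$ vertices, gives the claimed bound $n^{dk^{d+1}}$ after a careful accounting of constants. Making this last encoding precise — in particular pinning down how many arrangement vertices are needed, and why a bounded combinatorial rule determines the clique — is the technical heart of the argument, and is where the exotic exponent $dk^{d+1}$ comes from.
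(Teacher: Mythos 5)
The paper does not prove this proposition; it cites it as \cite[Theorem 15]{brimkovHomotheticPolygons2018} and uses it as a black box, so there is no internal proof to compare against. Evaluating your attempt on its own merits: the first two thirds are sound but redundant with what the paper already proves separately, and the last third has a genuine gap.

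Your reduction is correct and clean: writing each $P_i$ as $\phi^{-1}(B_i)$ for an axis-aligned box $B_i\subset\R^k$ and observing that $G$ is the intersection graph of $\{B_i\cap L\}$ inside $L=\im\phi$ is exactly right. Your ``core case'' ($n$ axis-aligned boxes in $\R^k$, witness point $q=(p_1,\dots,p_k)$ built from left endpoints via $1$-dimensional Helly) is also correct, and it is in fact a restatement of the paper's \cref{lem:Lavrov} (Spinrad/Lavrov). Note that for bounded polytopes in $\R^d$ the normals $a_1,\dots,a_k$ must span $\R^d$, so $L$ has dimension $d$ inside $\R^k$; your core case $L=\R^k$ therefore only occurs when $k=d$, where the bound $n^d$ is far below $n^{dk^{d+1}}$ and the theorem is trivially weaker. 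The content of \cref{prop:brimkov} lies entirely in $k>d$.

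That is precisely where your argument stops being a proof. You correctly identify that a maximal clique $C$ need not be pinned by a single common point of $L$, and you propose encoding $C$ by $q$ together with ``a bounded collection of auxiliary witnesses'' obtained by iterating Helly failures over $(d{+}1)$-tuples. But no concrete encoding is given: it is not specified how many witnesses are taken, how they are chosen from $C$, or — most importantly — why the resulting data \emph{determines} $C$ uniquely. Fixing $q$ only restricts attention to the subfamily $C_q$, and $\{B_i\cap L\}_{i\in C_q}$ is again an arbitrary family of up to $n$ polytopes in $L\cong\R^d$ with facets in $k$ directions, so this step is circular rather than a reduction. Even taking your heuristic counting at face value, $\le n^k$ choices of $q$ times $\bigl(k^d n^d\bigr)^{k^{d+1}}$ choices of witnesses yields an exponent of roughly $k+dk^{d+1}$ on $n$ (plus a large power of $k$), which exceeds, not matches, $dk^{d+1}$. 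You acknowledge this is the ``technical heart'' and that you have not worked it out — and indeed that heart is missing. As written, this is a plausible-looking program, not a proof; to complete it you would need to produce an explicit injection from maximal cliques into a set of the claimed size, which is exactly what \cite{brimkovHomotheticPolygons2018} supplies and what your sketch does not.
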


The next result strengthens \cref{prop:brimkov}\,(ii) in the special case that each polytope is a \emph{box}, i.e., the Cartesian product of $d$ closed intervals in $\R$ of positive length. It was stated without proof in \cite{spinradIntersectionContainment2003a} and proven in \cite{lavrovProofNumber}.  
For completeness, we include a proof here.

\begin{lemma}[{\cite{spinradIntersectionContainment2003a,lavrovProofNumber}}]\label{lem:Lavrov}
If $G$ is the intersection graph of a set $V$ of boxes in $\R^d$, then the number of maximal cliques of $G$ is at most $|V|^d$.
\end{lemma}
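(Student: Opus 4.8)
The plan is to exploit the Helly property of axis-parallel boxes to recast cliques combinatorially, and then to encode each maximal clique by a $d$-tuple of boxes. Write each $B\in V$ as a product of closed intervals, $B=\prod_{i=1}^d [\ell_i(B),u_i(B)]$. First I would record the elementary observation that a finite family of boxes has nonempty common intersection if and only if its members pairwise intersect: since the intersection of boxes is the product, over coordinates, of the intersections of the corresponding intervals, this reduces coordinate by coordinate to the one-dimensional fact that closed intervals $[a_1,b_1],\dots,[a_m,b_m]$ share a point (any point of $[\max_j a_j,\min_j b_j]$) as soon as they pairwise intersect. Consequently the cliques of $G$ are exactly the subfamilies $C\subseteq V$ with $\bigcap_{B\in C}B\neq\emptyset$, and the maximal cliques are the inclusion-maximal such subfamilies.

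Next, given a maximal clique $C$ (we may assume $V\neq\emptyset$, so $C\neq\emptyset$), I would consider the nonempty box $R=\bigcap_{B\in C}B$ and its \emph{lower corner} $p=(p_1,\dots,p_d)$, where $p_i=\max_{B\in C}\ell_i(B)$; nonemptiness of $R$ gives $p\in R$. The key claim is that $C=\{B\in V:p\in B\}$. The inclusion $\subseteq$ is immediate, since $p\in R\subseteq B$ for every $B\in C$; for $\supseteq$, if $p\in B$ then $p\in R\cap B$, so $C\cup\{B\}$ still has a common point and hence is a clique, forcing $B\in C$ by maximality of $C$. Finally, since $C$ is finite and nonempty, for each coordinate $i$ there is a box $B_i\in C$ with $\ell_i(B_i)=p_i$; choosing one such $B_i$ for each $i$ defines a map from the maximal cliques of $G$ to $V^d$. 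This map is injective, because from a tuple $(B_1,\dots,B_d)$ one recovers $p=(\ell_1(B_1),\dots,\ell_d(B_d))$ and hence $C=\{B\in V:p\in B\}$. Therefore the number of maximal cliques of $G$ is at most $|V^d|=|V|^d$.

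The argument is short, and the only point demanding care is the identity $C=\{B\in V:p\in B\}$, where maximality of $C$ is genuinely needed — without it, a box containing the corner $p$ need not lie in $C$, and the encoding would fail. The remaining ingredients — the Helly reduction to intervals and the fact that each $p_i$ is attained as some left endpoint $\ell_i(B_i)$ with $B_i\in C$ — are routine, as is the degenerate case $V=\emptyset$.
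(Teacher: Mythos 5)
Your proof is correct and takes essentially the same approach as the paper's: you use the Helly property of axis-parallel boxes and then encode each maximal clique by the lower corner of its common intersection, which is precisely the paper's notion of an ``anchor.'' You spell out in detail the identity $C=\{B\in V:p\in B\}$ and the injectivity of the resulting map to $V^d$, steps the paper leaves as ``readily checked,'' but the underlying argument is the same.
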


\begin{proof}
We call $x=(x_1,\ldots,x_d)\in \R^d$ an \emph{anchor} if each $x_i$ is the minimum $i^{\text{th}}$ coordinate of some box in $V$.  

Boxes have the \emph{binary intersection property} (also known as the \emph{$2$-Helly property}): The vertex sets of cliques in $G$ are exactly the simplices in $\Ner(V)$ \cite{danzerIntersectionProperties1982}.  In particular, if $\sigma\subset V$ is the vertex set of a maximal clique in $G$, then there exists $x\in \R^d$ such that $\sigma= \{S\in V\mid x\in S\}$.  In fact, it is readily checked that $x$ may be chosen as an anchor.  Thus, the number of maximal cliques of $G$ is at most the number of anchors, which at most $|V|^d$.  
\end{proof}

\begin{proof}[Proof of \Cref{cor:intro-2}\,(i)]
In what follows, we assume all metric balls are closed.  Suppose $X\subset \R^d_p$.  If $d=1$ or $p=\infty$, then metric balls in $\R^d_p$ are boxes, while if $(d,p)=(2,1)$, then metric balls are rotations of boxes (squares) by 45 degrees.  Therefore, in each of these cases,  \Cref{lem:Lavrov} implies that for each $r\in [0,\infty)$, the number of maximal cliques in $\mathcal G(X)_r$ is at most $|X|^d$.  Recall that maximal cliques in $\mathcal{G}(X)_r$ correspond to maximal simplices in $\mathcal R(X)_r$.  As $\mathcal R(X)$ has at most $O(|X|^2)$ different complexes, it follows that 
 \[m_k(\mathcal R(X))=O\left(|X|^{d(k+1)+2}\right).\]

Next consider the case $p=1$ and $d>2$.  A metric ball in $\R^d_1$ is a \emph{hyperoctahedron}, which is a convex polyhedron whose facets are parallel to $2^{d-1}$ hyperplanes; these hyperplanes are independent of center and radius of the ball.  Thus, by \Cref{prop:brimkov}, the number of maximal cliques in $\mathcal{G}(X)_r$ is at most $|X|^{\alpha}$, where $\alpha= d2^{d^2-1}$.  It follows that \[m_k(\mathcal R(X))=O\left(|X|^{\alpha(k+1)+2}\right).\qedhere\]
\end{proof}

To prove \cref{cor:intro-2}\,(ii), we approximate metric balls in $\R_p^d$ by convex polytopes and appeal to \Cref{prop:brimkov}.  To bound the number of facets of these polytopes, we will require a bound on the maximum size of a packing of metric balls on a hypersphere.  We discuss this first.

\begin{definition}\label{Def:Packing}
For $Y$ a metric space and $\delta,\delta'>0$, define a $(\delta,\delta')$-\emph{packing of $Y$} to be a set $W\subset Y$ such that
 \begin{enumerate}
 \item $\min_{w\in W} \|w-y\|_2\leq \delta$ for each $y\in Y$, and 
 \item $\|w-w'\|_2\geq \delta'$ for any distinct points $w,w'\in W$.  
 \end{enumerate}
Define a \emph{$\delta$-packing of $Y$} to be a $(\delta,\delta)$-packing.  
\end{definition}

For $d\in \N$ and $\delta>0$, let $\pack(\delta)$ denote the maximum size of a $\delta$-packing of the unit sphere $S^{d-1}\subset \R^d$, where $S^{d-1}$ is given the extrinsic metric.  Several bounds on $\pack(\delta)$ are known; see \cite{4247333} and \cite[Section 6.3]{boroczky2004finite}. 
In particular, a simple volume computation given in \cite{4247333} shows that 
\begin{equation}\label{Eq:SonthaliaPacking}
\pack(\delta)\leq 2d(1+2/\delta)^{d-1}.
\end{equation}

We will need the following notation for the statement of \Cref{thm:polytope-hdorff} below.  For $d\in \N$ and $\lambda>0$, let 
 \[f(\lambda)=\pack\left(\frac{1}{3\sqrt{\lambda}}\right).\]  
By \eqref{Eq:SonthaliaPacking}, we have
\begin{equation}\label{Eq:Sonthalia_Impliciation}
f(\lambda)\leq 2d(1+6\sqrt{\lambda})^{(d-1)}.
\end{equation}
In particular, $f(\lambda)=O(\lambda^{(d-1)/2})$ for fixed $d$.  

For $X\subset \R^d$, let
    \[
    \mathcal O(X)^p_r = \bigcup_{x \in X} B(x)^p_r,
    \]  
and let $z$ denote the origin in $\R^d$. 

\begin{proposition}[\cite{dudleyMetricEntropy1974a, har-peledProof19}]\label{thm:polytope-hdorff}
For $d\geq 2$, suppose $A\subset B(z)^2_1$ is closed, convex and non-empty.  Then for any $\eps \in (0,1)$, there exists a convex polytope $P$ with at most $f(1/\epsilon)$ facets such that 
\begin{equation}\label{eq:Polytop_Approx}
 A \subset P \subset \mathcal O(A)^2_\eps.
 \end{equation}
\end{proposition}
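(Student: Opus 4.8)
The plan is to prove \Cref{thm:polytope-hdorff} by a direct construction: cover the boundary sphere $S^{d-1}$ by finitely many points, take the polytope $P$ cut out by the supporting halfspaces of $A$ at the corresponding boundary directions, and verify the two inclusions in \eqref{eq:Polytop_Approx}. The left inclusion $A\subset P$ will be immediate from convexity (each defining halfspace contains $A$), so the real content is the right inclusion $P\subset \mathcal O(A)^2_\eps$, together with the bound on the number of facets.

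First I would set up the construction. Let $W\subset S^{d-1}$ be a $\tfrac{1}{3\sqrt{1/\eps}}$-packing, i.e.\ a $\tfrac{\sqrt\eps}{3}$-packing, so $|W|\leq \pack(\tfrac{1}{3\sqrt{1/\eps}})=f(1/\eps)$ by definition of $f$. For each $w\in W$, since $A$ is closed, convex, non-empty, and contained in the unit ball, there is a supporting halfspace $H_w=\{y : \langle u_w, y\rangle \leq c_w\}$ with $A\subset H_w$, chosen so that the outward normal direction $u_w$ is ``aligned'' with $w$ (concretely: if $A=\emptyset$ is excluded by hypothesis, pick the point $a_w\in A$ maximizing $\langle w, \cdot\rangle$ and let $H_w$ be the halfspace with outward normal $w$ through $a_w$; one may instead use $u_w=w$ directly). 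Set $P=\bigcap_{w\in W} H_w$. Then $A\subset P$ trivially, $P$ is a convex polytope (it is bounded, since the directions $W$ are $\tfrac{\sqrt\eps}{3}$-dense on the sphere and hence positively span $\R^d$), and $P$ has at most $|W|\leq f(1/\eps)$ facets.

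Next, the main step: showing $P\subset \mathcal O(A)^2_\eps$, i.e.\ every $x\in P$ lies within Euclidean distance $\eps$ of $A$. I would argue by contradiction. Suppose $x\in P$ with $\dist(x,A)>\eps$, and let $a\in A$ be the (unique, by convexity and closedness) nearest point of $A$ to $x$; write $v=(x-a)/\|x-a\|_2$, the outward unit normal to $A$ at $a$, so that $\langle v, y-a\rangle\leq 0$ for all $y\in A$. Since $A\subset B(z)^2_1$, we have $\|a\|_2\leq 1$ and $\|x\|_2$ bounded as well; pick $w\in W$ with $\|w-v\|_2\leq \tfrac{\sqrt\eps}{3}$ (possible by density of the packing, after checking $v\in S^{d-1}$). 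The key estimate is then to show $\langle w, x\rangle > \max_{y\in A}\langle w, y\rangle = c_w$, which contradicts $x\in H_w\supset$ the definition of $P$. To get this, write $\langle w, x\rangle - \langle w, a\rangle = \langle w, x-a\rangle = \|x-a\|_2\langle w, v\rangle \geq \eps\,(1 - \tfrac{\eps}{18})$ using $\langle w,v\rangle = 1 - \tfrac12\|w-v\|_2^2 \geq 1-\tfrac{\eps}{18}$, while for any $y\in A$, $\langle w, y-a\rangle = \langle v, y-a\rangle + \langle w-v, y-a\rangle \leq 0 + \|w-v\|_2\|y-a\|_2 \leq \tfrac{\sqrt\eps}{3}\cdot 2 = \tfrac{2\sqrt\eps}{3}$ (using $\|y-a\|_2\leq 2$ as both lie in the unit ball). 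Comparing, $\langle w,x\rangle - c_w \geq \langle w, x-a\rangle - \tfrac{2\sqrt\eps}{3} \geq \eps(1-\tfrac{\eps}{18}) - \tfrac{2\sqrt\eps}{3}$, and for $\eps\in(0,1)$ this is genuinely negative, so the naive bound is too weak — I would need to be more careful. The fix is to work with the scale-invariant formulation properly: rescale so $\dist(x,A)$ is not tied to $1$, or more precisely exploit that $\|x-a\|_2>\eps$ is only the distance we must beat, so one should compare against the witness direction more tightly, choosing the packing radius as a function of $\eps$ in a way that makes $\langle w, x-a\rangle$ dominate the error term $\|w-v\|_2 \cdot \mathrm{diam}$. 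This balancing of the packing radius against $\eps$ — ensuring $f(1/\eps)=\pack(\tfrac{1}{3\sqrt{1/\eps}})$ is exactly the right choice — is where the constant $3$ and the $\sqrt{\ }$ come from, and getting the inequality to close is the main obstacle; I would follow the argument of \cite{dudleyMetricEntropy1974a, har-peledProof19}, normalizing so that we may assume $\dist(x,A)$ equals its lower bound and tracking the cap geometry carefully.

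Finally, I would note that the hypothesis $d\geq 2$ and $A$ non-empty are used exactly to guarantee $S^{d-1}$ is non-trivial and that nearest-point projections and supporting hyperplanes exist, and that the bound $f(1/\eps)=O(\eps^{-(d-1)/2})$ promised after \eqref{Eq:Sonthalia_Impliciation} is what feeds into the application in the proof of \cref{cor:intro-2}\,(ii).
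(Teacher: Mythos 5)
Your construction is not the one the paper outlines, and the discrepancy is not cosmetic — it is exactly what makes your estimate fail to close, and no amount of renormalization will rescue it.

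You form $P$ by intersecting \emph{supporting} halfspaces of $A$ with prescribed outward normals $w$ drawn from a $(\sqrt\eps/3)$-packing of $S^{d-1}$. The paper instead takes $W$ to be a $\sqrt\eps$-packing of the sphere of radius $3$ about the origin, and for each $w\in W$ sets $n(w)\in A$ to be the \emph{nearest point} of $A$ to $w$; the halfspace $H_w$ then passes through $n(w)$ with outward normal $w-n(w)$. In your version the normal direction of $H_w$ is $w$ itself; in the paper's version it is $w-n(w)$, which \emph{adapts} to the geometry of $A$. This is the essential point of Dudley's construction.

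The computation you carried out is correct, and correctly shows that with prescribed-normal tangent halfspaces the gain $\langle w, x-a\rangle \approx \eps$ is swamped by the error $\lVert w-v\rVert\cdot\mathrm{diam}(A)\approx \sqrt\eps$. This is a genuine defect of the construction, not a bookkeeping one. Consider $A=[-e_1,e_1]\subset\R^2$: the supporting halfspace with outward normal $w=(\sin\delta,\cos\delta)$ is $\{y_2\le\tan\delta\,(1-y_1)\}$, which at $y_1=0$ allows $y_2$ up to roughly $\delta$. With a $\sqrt\eps$-scale net in directions, the resulting polytope bulges out by $\Theta(\sqrt\eps)$ above the midpoint of the segment, not $\Theta(\eps)$. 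So your construction gives only a $\sqrt\eps$-approximation with $O(\eps^{-(d-1)/2})$ facets, the wrong tradeoff.

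What makes the paper's construction work — and what your "rescale so that $\dist(x,A)$ is not tied to $1$" remedy does not supply — is the $1$-Lipschitz property of the nearest-point projection onto a closed convex set. For the point $q$ on the radius-$3$ sphere lying along the ray from $a$ (the nearest point of $A$ to $x$) through $x$, one has $n(q)=a$; choosing $w\in W$ with $\lVert w-q\rVert\le\sqrt\eps$ then gives $\lVert n(w)-a\rVert\le\sqrt\eps$, the cross term $\langle v,\,a-n(w)\rangle$ has a favorable sign because $a$ is a nearest point, and $\lVert w-n(w)\rVert\ge 2$ (the radius-$3$ sphere keeps $w$ at distance at least $2$ from $B(z)^2_1\supset A$). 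Together these make the error term quadratic, $O(\lVert w-q\rVert^2)=O(\eps)$, rather than linear. If you want to complete the proof, you should adopt the nearest-point-projection halfspaces from \cite{har-peledProof19} and carry out that estimate; the tangent-halfspace route cannot reach the claimed bound.
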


\cref{thm:polytope-hdorff} is due to Dudley \cite{dudleyMetricEntropy1974a}.  Proofs also appear in work of Har-Peled and Jones  \cite{har-peledProof19} and Reisner et al.  \cite[Proposition 2.7]{reisner2001dropping}.    Our statement of the result follows \cite{har-peledProof19}.  We now outline the proof given in \cite{har-peledProof19}, in order to elucidate the role of hypersphere packing.

\begin{proof}[Outline of proof of \cref{thm:polytope-hdorff}]
Let $S\subset \R^{d}$ be the $(d-1)$-dimensional sphere of radius $3$ centered at the origin, and let $W$ be a $\sqrt{\epsilon}$-packing of $S$.  To construct $P$, for each $w\in W$ let $n(w)\in \partial A$ be the nearest neighbor of $w$ in $A$, and let $H_w$ be the halfspace containing $A$ whose boundary contains $n(w)$ and is orthogonal to $w-n(w)$.  Let $P=\bigcap_{w\in W} H_w$.  To obtain the bound on the number of facets of $P$, we note that $P$ has at most $|W|$ facets and that $|W|\leq f(1/\epsilon)$.  For the argument that $P$ satisfies \cref{eq:Polytop_Approx}, see \cite{har-peledProof19}. 
\end{proof}

We will use \cref{thm:polytope-hdorff} to prove the following:

\begin{proposition}\label{Prop:Lp_Ball_Approx}
     For all $d\geq 2$, $p \in [1, \infty]$, and $\eps \in (0,1)$, there exists a polytope $P$ with at most $f(\sqrt{d}/\epsilon)$ facets satisfying 
     \[
     B(z)^p_1 \subset P \subset B(z)^p_{1+\eps}.
     \]
\end{proposition}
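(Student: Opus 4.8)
The plan is to reduce to \cref{thm:polytope-hdorff} using the equivalence of the $\ell_p$ and $\ell_2$ norms on $\R^d$: for every $x\in\R^d$ one has $\|x\|_2\le\|x\|_p\le d^{1/p-1/2}\|x\|_2$ when $p\le 2$, and $\|x\|_p\le\|x\|_2\le d^{1/2-1/p}\|x\|_p$ when $p\ge 2$, and in all cases the multiplicative constant is at most $\sqrt d$. The idea is to rescale $B(z)^p_1$ so that it fits inside the Euclidean unit ball, apply \cref{thm:polytope-hdorff} with the parameter $\eps'=\eps/\sqrt d$ to obtain a bounding polytope whose Euclidean $\eps'$-neighborhood still contains the ball, rescale back, and use the norm inequalities again to check that this rescaled neighborhood lands inside $B(z)^p_{1+\eps}$.

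I would split into two cases. If $p\le 2$, then $B(z)^p_1\subset B(z)^2_1$, so I set $A=B(z)^p_1$, which is closed, convex, and nonempty, and apply \cref{thm:polytope-hdorff} with $\eps'=\eps/\sqrt d\in(0,1)$ to get a polytope $P$ with at most $f(1/\eps')=f(\sqrt d/\eps)$ facets and $A\subset P\subset\mathcal O(A)^2_{\eps'}$. For $y\in\mathcal O(A)^2_{\eps'}$, choosing $a\in A$ with $\|y-a\|_2\le\eps'$ gives $\|y\|_p\le\|a\|_p+\|y-a\|_p\le 1+\sqrt d\,\|y-a\|_2\le 1+\sqrt d\,\eps'=1+\eps$, so $P\subset B(z)^p_{1+\eps}$. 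If $p\ge 2$, I instead set $A=B(z)^p_{1/\sqrt d}$, which lies in $B(z)^2_1$ because $\|x\|_2\le\sqrt d\,\|x\|_p$; applying \cref{thm:polytope-hdorff} with the same $\eps'$ gives $P'$ with at most $f(\sqrt d/\eps)$ facets and $A\subset P'\subset\mathcal O(A)^2_{\eps'}$, and I take $P=\sqrt d\,P'$. Since rescaling by a positive scalar is an affine isomorphism preserving the face lattice, $P$ has the same number of facets as $P'$, and $B(z)^p_1=\sqrt d\,A\subset P\subset\sqrt d\,\mathcal O(A)^2_{\eps'}=\mathcal O(B(z)^p_1)^2_{\sqrt d\,\eps'}$; for $y$ in this last set, picking $a$ with $\|a\|_p\le 1$ and $\|y-a\|_2\le\sqrt d\,\eps'$ yields $\|y\|_p\le 1+\|y-a\|_p\le 1+\|y-a\|_2\le 1+\sqrt d\,\eps'=1+\eps$. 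The case $p=2$ is covered by either branch and $p=\infty$ by the second.

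Once \cref{thm:polytope-hdorff} is available, this is essentially bookkeeping, so I do not expect a real obstacle. The points requiring care are: getting the direction of each norm inequality right, which is precisely why the case split is needed and why $\sqrt d$ rather than $d$ appears inside $f$; checking that rescaling commutes with the thickening operator $\mathcal O(-)^2_\bullet$ and preserves facet counts; and noting that $\eps'=\eps/\sqrt d\in(0,1)$ (using $\eps\in(0,1)$ and $d\ge 2$) so that \cref{thm:polytope-hdorff} applies, its hypothesis $d\ge 2$ being part of our assumptions.
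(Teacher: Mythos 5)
Your proof is correct and follows essentially the same route as the paper's: case-split around $p=2$, inscribe the $\ell_p$-ball (rescaled if $p\geq 2$) in the Euclidean unit ball, invoke \cref{thm:polytope-hdorff}, then rescale back and use the $\ell_p$--$\ell_2$ norm comparison to check the outer containment. The only cosmetic difference is that the paper works with the tight constant $d'=d^{\lvert 1/2-1/p\rvert}$ and then observes $d'\le\sqrt d$, whereas you use the uniform $\sqrt d$ throughout; both yield the stated facet bound $f(\sqrt d/\eps)$.
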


\begin{proof}
 Recall  that for any $x\in \R^d$ and $1 \leq q \leq q'\leq \infty$, we have 
    \[\lVert x \rVert_{q'} \leq \lVert x \rVert_q \leq d^{\frac{1}{q}-\frac{1}{q'}} \lVert x\rVert_{q'},\]
    where $1/\infty=0$.
    In the case $p \in [2,\infty]$, applying the second inequality with $q=2$ and $q'=p$ yields $
    B(z)^p_{1/d'} \subset B(z)^2_1$, where $d'=d^{\frac{1}{2}-\frac{1}{p}}$.   Note that $d'\leq \sqrt{d}$.  Let $\eps'=\eps/d'$.  Taking $A=B(z)^p_{1/d'}$ in \Cref{thm:polytope-hdorff} implies that there is a polytope $P'$ with at most $f(1/\eps')= f(d'/\eps)$ facets satisfying 
    \[
    B(z)^p_{1/d'} \subset P' \subset \mathcal O(B(z)^p_{1/d'})^2_{\eps'} \subset  \mathcal O(B(z)^p_{1/d'})^p_{\eps'} =B(z)^p_{(1+\eps)/d'}.
    \]
    Linearly rescaling $P'$ by a factor of $d'$, we obtain a polytope $P$ with the same number of facets satisfying \[B(z)^p_1 \subset P \subset B(z)^p_{1+\eps}.\]
   Next, consider the case $p \in [1,2)$.  Note that $
    B(z)^p_1 \subset B(z)^2_1.$  Let $d'=d^{\frac{1}{p}-\frac{1}{2}}$ and $\eps'=\eps/d'$.  As above, we have $d'\leq \sqrt{d}$.  Taking $A=B(z)^p_1$ in \Cref{thm:polytope-hdorff} implies that there exists a polytope $P$ with at most $f(1/\eps')=f(d'/\epsilon)$ facets satisfying 
    \[
    B(z)^p_1 \subset P \subset \mathcal O(B(z)_1^p)^2_{\eps'} \subset \mathcal O(B(z)_1^p)^p_{\eps} =B(z)^p_{1+\eps}.
  \qedhere  \]
\end{proof}

We can now give the construction that we will use to prove \Cref{cor:intro-2}\,(ii). Given a finite set $X \subset \R^d$ and $\epsilon \in (0,1)$, \Cref{Prop:Lp_Ball_Approx} implies that there exists a polytope $P$ with $f(\sqrt{d}/\epsilon)$ facets satisfying \[B(z)^p_1 \subset P\subset B(z)^p_{1+\eps}.\]  For any $x\in X$ and $r \in\T$,  linearly rescaling $P$ by a factor of $r$ and then translating by $x$ gives a polytope $P(x)_r$ satisfying 
\[
B(x)^p_r \subset P(x)_r \subset B(x)^p_{r(1+\eps)}.
\]
Note that if $r>0$, then $P(x)_r$ has the same number of facets as $P$, while if $r=0$, then $P(x)_r=\{x\}$.
Let $V_r := \{ P(x)_r \mid x \in X\}$  and $\mathcal P (X)_r := \Gamma_{V_r}$. \Cref{Fig:Spherical-vs-Polyhedral-Graphs} shows an example in the plane. These graphs assemble into a graph filtration
\[
\mathcal P(X):\T\to \Simp.
\]

\begin{figure}[h]
  \centering
  \begin{subfigure}[c]{0.45\textwidth}
    \centering
    \includegraphics[width=\textwidth]{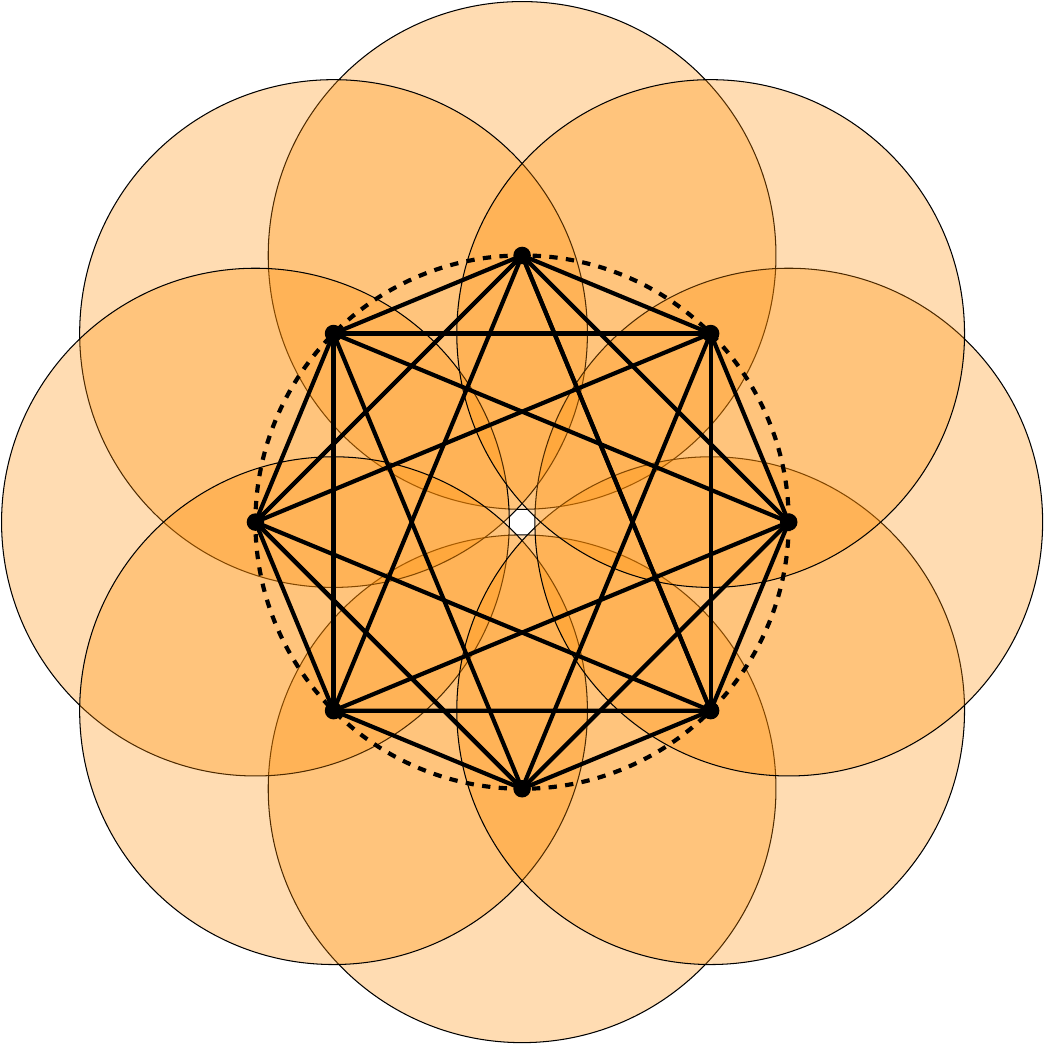}
    \caption{The graph $C_4$ realized as an intersection graph of disks.\\ {}}
  \end{subfigure}
    \hfill
  \begin{subfigure}[c]{0.45\textwidth}
    \centering
    \includegraphics[width=\textwidth]{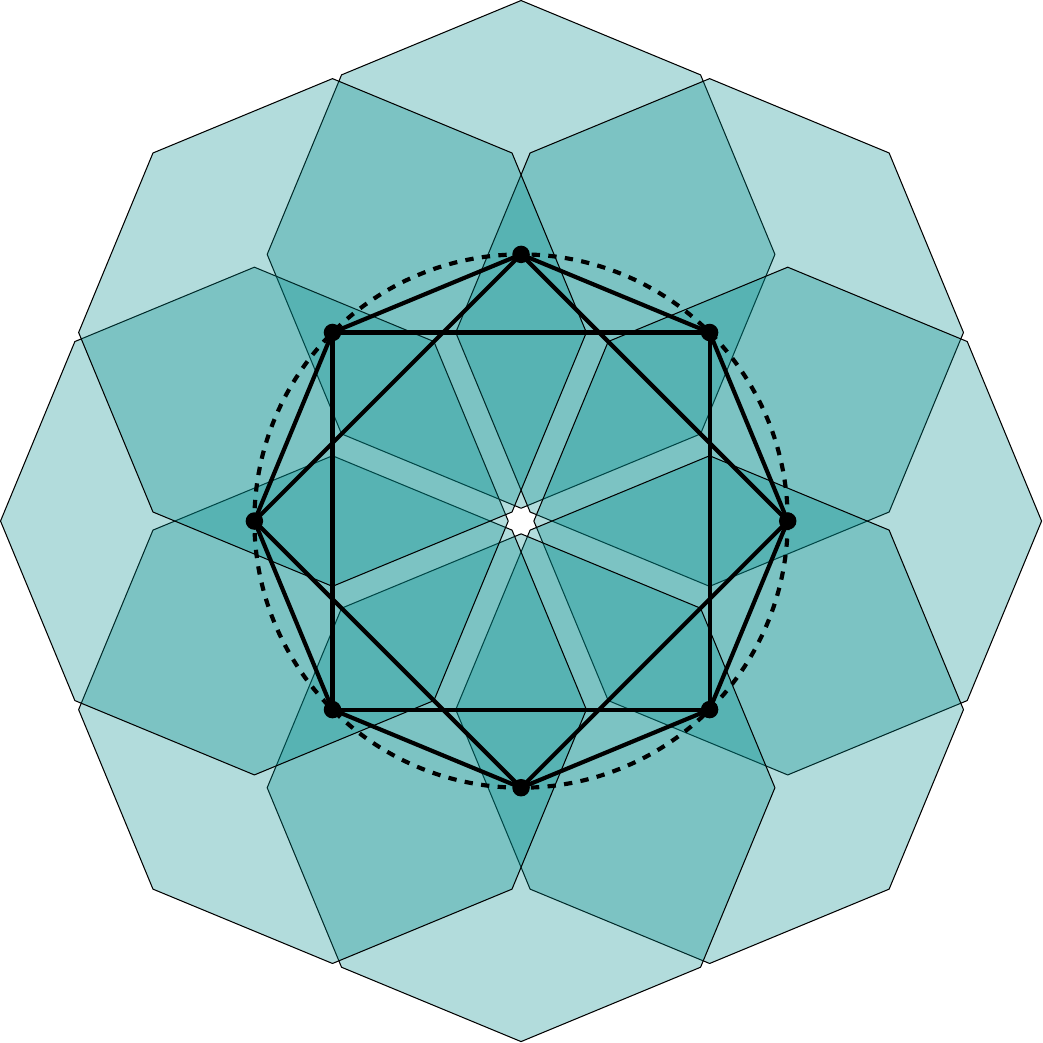}
    \caption{The intersection graph obtained by replacing disks with inscribed octagons.}
  \end{subfigure}
  \caption{Spherical vs polyhedral neighborhood graphs.}
  \label{Fig:Spherical-vs-Polyhedral-Graphs}
\end{figure}

\begin{lemma}\label{lem:poly-rips-interleaved}
   The clique filtrations  $\rips(X)$ and $\overline{\mathcal P}(X)$ are $(1+\eps)$-interleaved.
\end{lemma}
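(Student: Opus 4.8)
The plan is to reduce the statement to a single chain of subcomplex inclusions
\[
\rips(X)_r \;\subseteq\; \overline{\mathcal P}(X)_r \;\subseteq\; \rips(X)_{r(1+\eps)},\qquad r\in\T,
\]
and then observe that these, together with the structure maps internal to each filtration, assemble into a functor $\mathcal Z\colon I^{1+\eps}\to\Simp$ with $\mathcal Z\circ E^0=\rips(X)$ and $\mathcal Z\circ E^1=\overline{\mathcal P}(X)$.

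First I would verify the two graph containments $\mathcal G(X)_r\subseteq\mathcal P(X)_r$ and $\mathcal P(X)_r\subseteq\mathcal G(X)_{r(1+\eps)}$, all graphs having vertex set $X$. By \cref{Lem:Nbhd_Intersection_Graph}, $[x,y]$ is an edge of $\mathcal G(X)_r=\Gamma_{U_r}$ precisely when $B(x)^p_r\cap B(y)^p_r\neq\emptyset$. Since $B(x)^p_r\subseteq P(x)_r$ for every $x$ (this holds by construction of $P(x)_r$, including the degenerate case $r=0$, where $P(x)_0=\{x\}=B(x)^p_0$), a nonempty such intersection forces $P(x)_r\cap P(y)_r\neq\emptyset$, i.e. $[x,y]\in\Gamma_{V_r}=\mathcal P(X)_r$; this gives the first containment. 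Symmetrically, using $P(x)_r\subseteq B(x)^p_{r(1+\eps)}$ and applying \cref{Lem:Nbhd_Intersection_Graph} at scale $r(1+\eps)$, every edge of $\mathcal P(X)_r$ is an edge of $\mathcal G(X)_{r(1+\eps)}$. Taking clique complexes is monotone with respect to the subgraph order on graphs with a fixed vertex set, so passing to clique complexes yields the displayed chain.

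Then I would assemble the interleaving. Set $\mathcal Z_{(r,0)}=\rips(X)_r$ and $\mathcal Z_{(r,1)}=\overline{\mathcal P}(X)_r$, each regarded as a subcomplex of the full simplex on $X$. A morphism $(r,i)\to(s,j)$ of $I^{1+\eps}$ arising from condition (ii) (so $i=j$, $r\leq s$) is sent to the relevant structure map of $\rips(X)$ or $\overline{\mathcal P}(X)$; a morphism arising from condition (i) with $(i,j)=(0,1)$ is sent to the composite $\rips(X)_r\subseteq\overline{\mathcal P}(X)_r\subseteq\overline{\mathcal P}(X)_s$, and with $(i,j)=(1,0)$ to $\overline{\mathcal P}(X)_r\subseteq\rips(X)_{r(1+\eps)}\subseteq\rips(X)_s$. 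Since $I^{1+\eps}$ is thin and every assigned map is an inclusion of subcomplexes of one ambient complex, compatibility with composition is automatic, so $\mathcal Z$ is a well-defined functor; by construction it restricts to $\rips(X)$ and $\overline{\mathcal P}(X)$ along $E^0$ and $E^1$, exhibiting the $(1+\eps)$-interleaving.

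I do not expect a genuine obstacle here: the argument is bookkeeping around the squeeze $B(x)^p_r\subseteq P(x)_r\subseteq B(x)^p_{r(1+\eps)}$ together with \cref{Lem:Nbhd_Intersection_Graph}. The only points deserving a moment's care are that clique-complex formation is monotone on graphs sharing the vertex set $X$, and the degenerate scale $r=0$, where $P(x)_0=\{x\}$ must be treated separately but behaves as expected.
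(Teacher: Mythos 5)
Your proof is correct and follows essentially the same route as the paper's: establish the pointwise containments $B(x)^p_r \subseteq P(x)_r \subseteq B(x)^p_{r(1+\eps)}$, translate via \cref{Lem:Nbhd_Intersection_Graph} to the graph containments $\mathcal G(X)_r \subseteq \mathcal P(X)_r \subseteq \mathcal G(X)_{r(1+\eps)}$, and take clique complexes. You additionally spell out the assembly of the interleaving functor $\mathcal Z$ and the degenerate case $r=0$, both of which the paper leaves implicit; these are harmless elaborations, not a different argument.
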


\begin{proof}
    By construction, for each $x\in X$ and $r \in\T$, we have 
    \[
    B(x)^p_r \subset P(x)_r \subset B(x)^p_{r(1+\eps)},
    \]
    so by \Cref{Lem:Nbhd_Intersection_Graph}, we have
    \[
    \mathcal{G}(X)_r \subset \mathcal P(X)_r \subset \mathcal{G}(X)_{r(1+\eps)}.
    \]
    Passing to clique complexes then yields
    \[
    \rips(X)_r \subset \overline{\mathcal P}(X)_r \subset \rips(X)_{r(1+\eps)},
    \]
which implies the result.  
\end{proof}

The following is an immediate consequence of \Cref{lem:poly-rips-interleaved} and \Cref{Prop:Inerleaving_and_Subdivision}:

\begin{lemma}\label{lem:sp-sr-interleaved}
    The bifiltrations $ \mathcal {S}\overline{\mathcal P}(X)$ and $\srips(X)$ are $(1+\eps)$-interleaved.
\end{lemma}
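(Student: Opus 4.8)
The plan is to simply chain together the two results cited immediately before the statement. First I would invoke \Cref{lem:poly-rips-interleaved}, which gives that the simplicial filtrations $\rips(X),\overline{\mathcal P}(X)\colon \T\to\Simp$ are $(1+\eps)$-interleaved. Then I would apply \Cref{Prop:Inerleaving_and_Subdivision} with $\mathcal F=\rips(X)$ and $\mathcal F'=\overline{\mathcal P}(X)$: since these two filtrations are $(1+\eps)$-interleaved, the proposition yields that $\mathcal{SR}(X)$ and $\mathcal S\overline{\mathcal P}(X)$ are $(1+\eps)$-interleaved. Finally, unwinding notation, $\mathcal{SR}(X)=\srips(X)$ by definition, so this is exactly the claim.

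Concretely, the one substantive check is that \Cref{Prop:Inerleaving_and_Subdivision} applies verbatim: its hypothesis is a pair of $\T$-indexed simplicial filtrations that are $(1+\eps)$-interleaved, which is precisely what \Cref{lem:poly-rips-interleaved} supplies, and its conclusion is the $(1+\eps)$-interleaving of the corresponding subdivision bifiltrations in $\mathbb N^{\op}\times\T$. No new construction is needed; in particular there is nothing to do regarding the nerve-model machinery here, since the statement is purely about the interleaving relation on subdivision bifiltrations.

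I do not expect any real obstacle in this step — all the work has already been done in \Cref{lem:poly-rips-interleaved} (reducing to the intersection-graph description via \Cref{Lem:Nbhd_Intersection_Graph} and the sandwiching $B(x)^p_r\subset P(x)_r\subset B(x)^p_{r(1+\eps)}$) and in \Cref{Prop:Inerleaving_and_Subdivision} (passing an interleaving through the barycentric subdivision functor, using that structure maps of a filtration are monomorphisms). So the proof of this lemma is a two-line deduction, and I would present it as such.
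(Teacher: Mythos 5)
Your proposal is exactly the paper's argument: the paper states this lemma as an immediate consequence of \Cref{lem:poly-rips-interleaved} and \Cref{Prop:Inerleaving_and_Subdivision}, which is precisely the two-step deduction you give. Nothing to add.
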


\begin{proof}[Proof of \Cref{cor:intro-2} (ii)]
The case $d=1$ is covered by \Cref{cor:intro-2}\,(i), so assume that $d\geq 2$.  Each $\mathcal P(X)_r$ is an intersection graph of $|X|$ polytopes whose facets are parallel to at most $f(\sqrt{d}/\epsilon)$ distinct hyperplanes.
%Using the fact that $\sqrt{128}+1\leq \sqrt{152}$, 
Inequality \eqref{Eq:Sonthalia_Impliciation} implies that 
\[f(\sqrt{d}/\epsilon)\leq 2d\left(1+6\sqrt[4]{d}/\sqrt{\epsilon}\right)^{(d-1)}\leq  2\cdot 7^{d-1}d^{(\frac{d-1}{4}+1)}(1/\epsilon)^{\frac{d-1}{2}}.\]  Thus, letting
\begin{equation}\label{eq:1.8constant}
c_d=d\left( 2\cdot 7^{d-1}d^{(\frac{d-1}{4}+1)}\right)^{d+1},
\end{equation}
\Cref{prop:brimkov} implies that the number of maximal cliques in $\mathcal P(X)_r$ is $O(|X|^{\beta})$, where
$\beta = c_d \eps ^{-(d^2-1)/2}$. 
Therefore, \[m_k\left(\overline{\mathcal P}(X)\right)=O\left(|X|^{\beta(k+1)+2}\right).\]  
The result now follows from \cref{Cor:Thm_Generalization}\,(i) and \Cref{lem:sp-sr-interleaved}.
\end{proof}

\section{Computation}\label{Sec:Computation}
We now briefly discuss computation. Specifically, we address the problem of computing the fixed-dimensional skeleta of the nerve-based semifiltrations appearing in the statements of our main theorems.  We concern ourselves only with the modest goal of computing these skeleta in time polynomial in the total size of input and output (i.e., \emph{output-sensitive polynomial time}), leaving a more nuanced study of computation to future work. 

Observe that for a finitely presented filtration $\mathcal F\colon \N\to \Simp$, to compute the $0$-skeleton of $\mathcal {NF}$ it suffices to compute the set $\Sigma_t$ of maximal simplices of each $\mathcal F_t$.  Once we have done so, to compute the $k$-skeleton, it suffices to test all $j$-sets of each $\Sigma_t$ for non-empty intersections, where $j\leq k+1$.  For fixed $k$, this can be done naively in polynomial time.

When $\mathcal F$ is the clique filtration of a graph filtration $\mathcal G$ (e.g., a Rips filtration), we can compute the maximal simplices of $\mathcal F_t$ in output-sensitive polynomial time using well-known algorithms for enumerating all maximal cliques in a graph \cite{tsukiyamaNewAlgorithm1977,makino2004new,cazals2008note}.   

To compute the 0-skeleton of the semifiltration $\mathcal{J}(X)$ of \cref{cor:intro-3}, it suffices to compute the maximal simplices of the intrinsic \v Cech filtration $\mathcal I(X)$.  As these maximal simplices are metric balls (see the proof of \cref{lem:cech-simplices}), we can compute them naively in polynomial time by computing all metric balls at each index and doing containment tests to determine which balls are maximal.

To compute a semifiltration as in the statement of \Cref{cor:intro-2}\,(ii) in polynomial time (for fixed $d$ and $\epsilon$), we adapt the construction underlying the proof of  \Cref{cor:intro-2}\,(ii).   The main task is to compute the filtered intersection graph $\overline{\mathcal P}(X)$.  We preface our discussion of this with a technical caveat: Whereas each vertex of $\overline{\mathcal P}(X)_r$ is a polytope with at most $f(\sqrt{d}/{\epsilon})$ facets, our algorithm will produce a variant of $\overline{\mathcal P}(X)_r$ for which this number of facets is most $f(4\sqrt{d}/\epsilon)$.  Hence, the semifiltration output by our algorithm will satisfy the size bound of \Cref{cor:intro-2}\,(ii) for a larger value of the constant $c_d$ than specified in the proof of \Cref{cor:intro-2}\,(ii).   

 To compute $\overline{\mathcal P}(X)$, it suffices to give algorithms for the following two problems, where we assume that each polytope $Q$ is represented as a list of half-spaces whose intersection is $Q$:
\begin{enumerate}
\item Given $\epsilon>0$ and an $\ell_p$-ball $A\subset \R^d$ of radius $3$ centered at the origin, compute a convex polytope $P$ of size at most $f(4/\epsilon)$ satisfying 
the containment condition \eqref{eq:Polytop_Approx} of \cref{thm:polytope-hdorff}.
\item Determine whether two convex polytopes in $\R^n$ have non-empty intersection.  
\end{enumerate}

The second problem can be solved in polynomial time using standard linear programming algorithms \cite[Chapter 7]{matouvsek2007understanding}.  For the first problem, we adapt the construction from the proof of \cref{thm:polytope-hdorff} given in \cite{har-peledProof19}, which we have outlined in \cref{Sec:Lp-Approximation}.  

Letting $\delta=\sqrt{\epsilon}$, recall that the construction begins with a $\delta$-packing  $W$ of a sphere $S\subset \R^d$ of radius $3$ centered at the origin.    Given $W$, the construction of $P$ is transparently algorithmic in the present case, as for each $w\in W$, its nearest neighbor in $A$ is the point $3w/\|w\|_p$.  
 
We do not know a deterministic algorithm for computing a $\delta$-packing of $S$.  However, the argument of \cite{har-peledProof19} extends immediately to show that if one instead begins with a $(\delta,\delta/2)$-packing of $S$ (see \cref{Def:Packing}), then the same construction yields a polytope $P$ with at most $f(4/\epsilon)$ facets satisfying \eqref{eq:Polytop_Approx}.  

For fixed $d$ and $\eps$, we can compute a $(\delta,\delta/2)$-packing of $S$ naively in constant time, e.g.,  as follows: First, compute a $(\delta/4)$-sample $W''$ of a hypercube containing $S$, so that $W''$ excludes the origin. For example, $W''$ can be chosen as a uniform grid.  Projecting $W''$ onto $S$ via the normalization map $x \mapsto x/\lVert x \rVert_2$ yields a $(\delta/2)$-sample $W'=\{w_1,\ldots,w_l\}$ of $S$.  Finally, compute the set $W=W_l$ defined inductively by $W_1=\{w_1\}$, and for $i\in \{2,\dots,l\}$, 
\[W_i=\begin{cases}
W_{i-1} \cup \{w_i\} &\textup{ if }  \| w-w_i\| \geq \delta/2 \textup{ for all } w\in W_{i-1},\\
W_{i-1}   &\textup{ otherwise}. 
\end{cases}\]
It is clear that $W$ is a $(\delta,\delta/2)$-packing of $S$.  
\appendix

\section{Constructing a Bifiltration from a Semifiltration}
\label{Semi_Filtration_to_Bifilration}
The simplicial models in this paper are generally not bifiltrations, but rather semifiltrations.  However, computation of 2-parameter persistent homology is simplest and most closely aligned with existing software \cite{fugacci2023compression,lesnickComputingMinimal2022a,lesnickInteractiveVisualization2015,bauer2023efficient} when the input is bifiltered.  Therefore, we wish to understand how to efficiently convert a semifiltration into a weakly equivalent bifiltration.  

Extending work of Dey et al. \cite{dey2014computing}, Kerber and Schreiber \cite{kerber2019barcodes} have given a nice solution to the 1-parameter version of this problem.  Namely, \cite{kerber2019barcodes} gives a construction to transform a finitely presented functor $\mathcal F\colon \mathbb N \to \Simp$ into a weakly equivalent filtration $\mathcal F'$ such that, letting $|\cdot|$ denote size, we have
\begin{equation}\label{eq:size_tower_to_filtration}
|\mathcal F'|=O(\Delta|\mathcal F|\log |\mathcal F^{\, 0}|),
\end{equation}
where $\Delta=\max_{t} \dim(\mathcal F_t)$ and $\mathcal F^{\, 0}$ is the 0-skeleton of $\mathcal F$. The authors also give an efficient algorithm for computing $\mathcal F'$.  

The approach of \cite{kerber2019barcodes} extends to convert a finitely presented simplicial semifiltration $\mathcal A\colon \mathbb N^{\mathrm{\op}}\times \mathbb N \to \Simp$ to a weakly equivalent bifiltration $\mathcal A'$, with the same asymptotic increase in size.  This amounts to applying the construction of \cite{kerber2019barcodes} to the functor $\mathcal{A}_{1,-}\colon \mathbb N \to \Simp$, yielding a filtration $\mathcal{A'}_{1,-}$ that inherits a bifiltered structure from $\mathcal A$.  

To explain this more fully, we recall some details of the construction of \cite{kerber2019barcodes}.  By factoring the structure maps of  $\mathcal F\colon \mathbb N \to \Simp$, we may assume without loss of generality that each structure map $\mathcal F_{t\subto t+1}$ is either is an identity map, an inclusion inserting a single simplex, or a \emph{vertex collapse}, i.e., a surjection that maps exactly two vertices $\{u,v\}$ to a single vertex $w$.  If we choose the factorization of each structure map to have a minimal number of factors, then the factorization does not change the size of $\mathcal F$. 

The construction of $\mathcal F'$ is inductive in $t$. We maintain a label for each vertex in $\mathcal F'_t$, marking the vertex as \emph{active} or \emph{inactive}.  If $\mathcal F_{t\subto t+1}$ is an identity map, then we also take $\mathcal F'_{t\subto t+1}$ to be an identity.  If $\mathcal F_{t\subto t+1}$ inserts a new simplex $\sigma$, then we take $\mathcal F'_{t+1}=\mathcal F'_{t}\cup \{\sigma \}$, and if $\sigma$ is a vertex, then we also mark it as active in $\mathcal F'_{t+1}$.  Now suppose $\mathcal F_{t\subto t+1}$ is vertex collapse mapping $\{u,v\}$ to $w$.  To construct $\mathcal F'_{t+1}$, we choose one of the vertices $\{u,v\}$.  Let $V$ be the subcomplex of the closed star\footnote{The closed star of a vertex $x$ in a simplicial complex is the subcomplex consisting of the cofaces of $x$ along with their faces.} of $v$ in $\mathcal F'_{t}$ spanned by active vertices, and let $X[t,{\mathcal F}]$ be the simplicial cone on $V$ with apex $u$.  We take $\mathcal F'_{t+1}=\mathcal F'_{t}\cup X[t,{\mathcal F}]$.  The vertex $v$ is then marked as inactive, and each subsequent instance of $w$ in $\mathcal F$ is renamed $u$.  See \cite[Figure 1]{kerber2019barcodes} for an illustration.   Either choice of $u$ or $v$ leads to a topologically correct construction, so \cite{kerber2019barcodes} makes the choice in a way that minimizes the size of the resulting filtration.  However, the asymptotic bound \eqref{eq:size_tower_to_filtration} on $|\mathcal F'|$ requires the choice of active vertex to be the one that leads to the smaller choice of $F'_{t+1}$.  
Henceforth, we assume without loss of generality that every vertex collapse we encounter is of the form $\{u,v\}\mapsto u$, where $v$ is subsequently marked inactive.
 
Next, we consider how to extend this approach to convert a finitely presented simplicial semifiltration $\mathcal{A}$ to a bifiltration $\mathcal{A'}$.  Without loss of generality, we assume as above that the structure maps $\mathcal{A}_{(1,t)\subto (1,t+1)}$ are either identities, inclusions of single simplices, or vertex collapses.  We apply the 1-parameter construction to $\mathcal{A}_{1,-}$, yielding a filtration $\mathcal{A'}_{1,-}$. Then, to define $\mathcal A'$, for $(k,t)\in \mathbb N^{\mathrm{\op}}\times \mathbb N$, we take $\mathcal{A'}_{k,t}$ to be the maximal subcomplex of $\mathcal{A'}_{1,t}$ on the vertices $\bigcup_{t'\leq t} \mathcal{A}^0_{k,t'}$, where $\mathcal{A}^0$ denotes the 0-skeleton of $\mathcal A$.  Informally, then, $\mathcal{A'}_{k,-}$ is the filtration obtained by restricting of construction of $A'_{1,-}$ to the subfunctor $A_{k,-}\subset A_{1,-}$.

\begin{proposition}
The bifiltration $\mathcal A'$ is weakly equivalent to $\mathcal A$.
\end{proposition}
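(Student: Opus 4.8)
\section*{Proof proposal}

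The plan is to reduce the statement to the $1$-parameter result of Kerber and Schreiber applied to each slice $\mathcal A_{k,-}\colon \mathbb N\to \Simp$, and then to promote the resulting family of $1$-parameter weak equivalences to a weak equivalence of $\mathbb N^{\mathrm{op}}\times\mathbb N$-indexed functors, using that the whole construction is monotone under restriction to subfunctors.

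I would begin by fixing $k$ and identifying $\mathcal A'_{k,-}$ with the filtration obtained by running the Kerber--Schreiber construction on $\mathcal A_{k,-}$, using the factorization of the structure maps of $\mathcal A_{k,-}$ obtained by restricting the chosen factorization of $\mathcal A_{1,-}$. This needs three checks. First, restricting an elementary factor of $\mathcal A_{1,t\subto t+1}$ to the maximal subcomplex on the vertices occurring in $\mathcal A_{k,-}$ again yields a move the construction can process: an identity restricts to an identity, a single-simplex insertion restricts to an insertion or an identity, and a vertex collapse $\{u,v\}\mapsto u$ restricts to a vertex collapse, an identity, or---in the case that exactly one of $u,v$ lies in the restricted complex---a relabeling isomorphism, which changes nothing. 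Second, the active/inactive marking is intrinsic to the run of the construction on $\mathcal A_{1,-}$ and so descends to each $\mathcal A'_{k,-}$. Third, for a collapse $\{u,v\}\mapsto u$ the cone $X[t,\mathcal A_{1,-}]$, built on the active part $V$ of the closed star of $v$ in $\mathcal A'_{1,t}$, restricts on the vertices of $\mathcal A'_{k,t+1}$ to precisely the cone on the active part of the closed star of $v$ in $\mathcal A'_{k,t}$; hence $\mathcal A'_{k,t+1}=\mathcal A'_{k,t}\cup X[t,\mathcal A_{k,-}]$, matching the construction. Granting these identifications, \cite{kerber2019barcodes} provides, for each $k$, a zigzag of objectwise homotopy equivalences between $\mathcal A'_{k,-}$ and $\mathcal A_{k,-}$.

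It remains to make this zigzag natural in $k\in\mathbb N^{\mathrm{op}}$. From the definition of $\mathcal A'_{k,t}$ as a maximal subcomplex, the inclusions $\mathcal A_{k+1,-}\hookrightarrow\mathcal A_{k,-}$ induce inclusions $\mathcal A'_{k+1,-}\hookrightarrow\mathcal A'_{k,-}$ compatible with the restricted factorizations; applying the same maximal-subcomplex recipe to each intermediate term of the Kerber--Schreiber zigzag exhibits the zigzag for $\mathcal A_{k+1,-}$ as a subfunctor of the zigzag for $\mathcal A_{k,-}$, with the zigzag maps restricting accordingly. Assembling over all $k$ yields a single zigzag of $\mathbb N^{\mathrm{op}}\times\mathbb N$-indexed $\Simp$-valued functors joining $\mathcal A$ and $\mathcal A'$, every component map of which is a homotopy equivalence by the previous paragraph; therefore $\mathcal A\simeq\mathcal A'$. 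The main obstacle is the bookkeeping of the first step---checking that the Kerber--Schreiber construction commutes with restriction to the subfunctors $\mathcal A_{k,-}$, in particular the cone identification in the third check and the ``one endpoint present'' case of the first; once this is established, both the naturality in $k$ and the conclusion follow, modulo recording the explicit form of the zigzag realizing the $1$-parameter equivalence of \cite{kerber2019barcodes}.
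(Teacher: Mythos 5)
Your strategy is genuinely different from the paper's. The paper does not try to identify $\mathcal A'_{k,-}$ with the output of the Kerber--Schreiber (KS) construction run on $\mathcal A_{k,-}$; instead it writes down a single natural transformation $\rho\colon \mathcal A' \to \mathcal A$ (sending a vertex $x \in \mathcal A'_{k,t}$ to $\mathcal A_{(k,s)\subto(k,t)}(x)$ for any $s\le t$ with $x\in \mathcal A_{k,s}$) and shows directly that each component $\rho_{k,t}$ is a homotopy equivalence, by factoring it as a sequence of cone collapses and invoking Quillen's Theorem A for simplicial complexes. That approach avoids the bookkeeping your route requires.

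The place where your proposal actually breaks is the first check, in the case you call ``a relabeling isomorphism.'' Suppose $\mathcal A_{(1,t)\subto(1,t+1)}$ is the collapse $\{u,v\}\mapsto u$ and, at level $k$, we have $v\in\mathcal A_{k,t}$ but $u\notin\mathcal A_{k,t}$ (so $u\in\mathcal A_{k,t+1}$). On one side, the paper's $\mathcal A'_{k,t+1}$ is the maximal subcomplex of $\mathcal A'_{1,t+1}$ on the accumulated vertex set, which now contains both $u$ and $v$; since $\mathcal A'_{1,t+1}$ contains the edge $[u,v]$ (it lies in the cone $X[t,\mathcal A_{1,-}]$), we get $[u,v]\in\mathcal A'_{k,t+1}$. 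On the other side, running KS on $\mathcal A_{k,-}$ with the restricted factorization, the step at time $t$ is the injective map $v\mapsto u$, which is not an identity, not a single-simplex insertion, and not a vertex collapse, so it is not a legal elementary move for the construction. If you treat it as ``changing nothing (up to relabeling),'' you produce a complex in which $v$ has been replaced by $u$, with no edge $[u,v]$; this is not even isomorphic to the paper's $\mathcal A'_{k,t+1}$. So the identification you need in step one is false as stated. One could try to repair it by refactoring the rename as an insertion of $u$ followed by a genuine collapse $\{u,v\}\mapsto u$, but then you are no longer ``using the factorization obtained by restricting the chosen factorization of $\mathcal A_{1,-}$,'' and the whole reduction has to be redone with care. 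A related issue affects your third check: the vertex set of $\mathcal A'_{k,t+1}$ can contain vertices of the active star $V$ that are not yet in $\mathcal A'_{k,t}$ (vertices of $\mathcal A_{1,t}$ that first enter $\mathcal A_{k,-}$ at time $t+1$), so restricting $X[t,\mathcal A_{1,-}]$ to the vertices of $\mathcal A'_{k,t+1}$ may be strictly larger than the cone on the active closed star of $v$ in $\mathcal A'_{k,t}$. Finally, even granting the slicewise identifications, the assembly into a single $\mathbb N^{\mathrm{op}}\times\mathbb N$-indexed zigzag requires the explicit form of the 1-parameter KS weak equivalence and a verification that its maps restrict along the inclusions $\mathcal A'_{k+1,-}\hookrightarrow\mathcal A'_{k,-}$; you flag this but do not supply it, whereas the paper's $\rho$ is manifestly natural in both indices by construction.
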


\begin{proof}
We have a natural simplicial epimorphism $\rho\colon \mathcal A'\to \mathcal A$ defined as follows: For $(k,t)\in \mathbb N^{\op}\times \mathbb N$, a vertex $x\in \mathcal A'_{k,t}$, and $s\leq t$ such that $x\in \mathcal A_{k,s}$, we let $\rho_{k,t}(x)=\mathcal A_{(k,s)\subto (k,t)}(x)$.  One easily checks that this definition does not depend on the choice of $s$.  
For fixed $(k,t)$, let $S\subset \mathbb N$ denote the set of indices $s<t$ such that $A_{(k,s)\subto (k,s+1)}$ is a collapse.   The map $\rho_{k,t}$ factors as a sequence of collapses of the simplicial cones $\{X[s,\mathcal{A}_{k,-}]\mid s\in S\},$ where the collapses are done in order of increasing $s$.    Specifically, if $A_{(k,s)\subto (k,s+1)}$ is the collapse  $\{u,v\}\mapsto u$, then we collapse the cone $X[s,\mathcal{A}_{k,-}]$ onto the image of the simplicial endomorphism of $X[s,\mathcal{A}_{k,-}]$ mapping $v$ to $u$ and every other vertex to itself.  By Quillen's Theorem A for simplicial complexes \cite{barmak2011quillen}, or by a simple discrete Morse theory argument, each cone collapse is a homotopy equivalence, and so the map $\rho_{k,t}$ is a homotopy equivalence.  Hence $\rho$ is an objectwise homotopy equivalence.  
\end{proof}

Finally, we check the following, where as above, $|\cdot|$ denotes size:
\begin{proposition}
Letting $\Delta=\max_{t} \dim(\mathcal A_{1,t})$, we have \[|\mathcal A'|=O(\Delta|\mathcal A| \log |\mathcal A^0|).\] 
\end{proposition}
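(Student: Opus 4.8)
The plan is to reduce the claim to the one-parameter bound \eqref{eq:size_tower_to_filtration} of Kerber and Schreiber applied to the tower $\mathcal A_{1,-}\colon\mathbb N\to\Simp$ on which $\mathcal A'$ is built, and then to trade the three one-parameter quantities $|\mathcal A'_{1,-}|$, $|\mathcal A_{1,-}|$, $|\mathcal A^0_{1,-}|$ appearing in that bound for their two-parameter analogues $|\mathcal A'|$, $|\mathcal A|$, $|\mathcal A^0|$. No work is needed for the $\Delta$ factor: $\Delta=\max_t\dim\mathcal A_{1,t}$ is by definition the ``$\Delta$'' occurring in \eqref{eq:size_tower_to_filtration} for the tower $\mathcal A_{1,-}$.

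First I would dispose of the two easy trades. Since $1$ is terminal in $\mathbb N^{\op}$, every index $(k,t)$ admits a structure map to the top-slice index $(1,t)$; hence restriction along $\iota\colon\mathbb N\hookrightarrow\mathbb N^{\op}\times\mathbb N,\ t\mapsto(1,t)$ cannot increase the minimal number of generators of a persistence module (push a minimal generating set of $C_j\mathcal A$ forward into the slice), so $\beta_0(C_j\mathcal A_{1,-})=\beta_0(\iota^{*}C_j\mathcal A)\le\beta_0(C_j\mathcal A)$ for all $j$. Together with the one-parameter estimate $\beta_1\le\beta_0$ for finitely presented $\mathbb N$-modules and with \cref{Prop:Betti_Numbers_Semi_Filtrations_Other_Indices} applied to $\mathcal A$, this yields $|\mathcal A_{1,-}|\le 2|\mathcal A|$ and $|\mathcal A^0_{1,-}|\le 2|\mathcal A^0|$; the latter enters only inside a logarithm, where a constant factor is harmless.

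The substantive trade is $|\mathcal A'|$ against $|\mathcal A'_{1,-}|$. Being a finitely presented filtration, $\mathcal A'_{1,-}$ is $1$-critical, so $|\mathcal A'_{1,-}|$ is just the number of simplices of $\colim\mathcal A'_{1,-}=\colim\mathcal A'$ (the colimits agree because each $\mathcal A'_{k,t}$ is by construction a subcomplex of $\mathcal A'_{1,t}$). Since $\mathcal A'$ is a bifiltration, hence a semifiltration, \cref{Prop:Betti_Numbers_Semi_Filtrations_Other_Indices} gives $|\mathcal A'|\le 2\sum_j\beta_0(C_j\mathcal A')$, where $\beta_0(C_j\mathcal A')$ counts the minimal bigraded birth indices of the $j$-simplices of $\mathcal A'$. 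I would show this is $O(|\mathcal A'_{1,-}|)$: writing $W_{k,t}=\bigcup_{t'\le t}\mathcal A^0_{k,t'}$ (a monotone family of vertex sets), the set of $(k,t)$ with $\sigma\in\mathcal A'_{k,t}$ is the up-set $\{t\ge t_\sigma\}\cap\{(k,t):\mathrm{vert}(\sigma)\subseteq W_{k,t}\}$, whose minimal antichain has size $1$ plus the number of ``jumps'' of the threshold $\kappa_\sigma(t)=\max\{k:\mathrm{vert}(\sigma)\subseteq W_{k,t}\}$. For the subdivision bifiltrations this paper is concerned with, every vertex carries a $t$-independent $\mathbb N^{\op}$-support (its dimension plus one), so $\kappa_\sigma$ is constant on $[t_\sigma,\infty)$, $\mathcal A'$ is $1$-critical, and $\sum_j\beta_0(C_j\mathcal A')=|\mathcal A'_{1,-}|$ exactly. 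Chaining this with \eqref{eq:size_tower_to_filtration} and the two trades above gives $|\mathcal A'|=O(\Delta|\mathcal A|\log|\mathcal A^0|)$.

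The main obstacle is this last step in full generality: an arbitrary finitely presented semifiltration $\mathcal A$ can make the threshold $\kappa_\sigma$ jump, so $\mathcal A'$ need not be $1$-critical, and a simplex of $\mathcal A'_{1,-}$ may acquire several minimal birth indices in $\mathcal A'$. Bounding the total number of these extra generators — ideally against $\beta_0(C_0\mathcal A)\le|\mathcal A^0|$ by charging each jump to a vertex-insertion event in the factored structure maps of $\mathcal A$, and in particular without picking up an extra multiplicative $\mathbb N^{\op}$-width factor — is the delicate point. Everything else (factoring the structure maps of $\mathcal A_{1,-}$ into elementary steps without changing $|\mathcal A_{1,-}|$, and the two slice comparisons) is routine.
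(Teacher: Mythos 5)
Your first two moves — applying the one-parameter bound \eqref{eq:size_tower_to_filtration} to $\mathcal A_{1,-}$ and converting $|\mathcal A_{1,-}|$, $|\mathcal A^{0}_{1,-}|$ into $O(|\mathcal A|)$, $O(|\mathcal A^0|)$ via top-slice restriction and $\beta_1\le\beta_0$ — are correct and implicit in the paper's proof. The gap is exactly where you flag it: comparing $|\mathcal A'|$ with $|\mathcal A'_{1,-}|$. Your plan is to count, for each simplex $\tau\in\colim\mathcal A'_{1,-}$, one plus the number of jumps of the threshold $\kappa_\tau$, and then bound the total number of jumps by $\beta_0(\Ch{0}\mathcal A)$ by charging each jump to a vertex birth in $\mathcal A$. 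But that charging is many-to-one: a single new birth index of a vertex $v$ in $\mathcal A$ can simultaneously trigger a jump of $\kappa_\tau$ for \emph{every} simplex $\tau$ containing $v$ whose remaining vertices are already available at the relevant index, and nothing in your proposal bounds that multiplicity. So the inequality $\sum_\tau(\#\text{jumps of }\kappa_\tau)\le\beta_0(\Ch{0}\mathcal A)$ is unjustified, and I do not expect it to hold in general. Your observation that $\mathcal A'$ is $1$-critical when $\mathcal A$ is one of the nerve semifiltrations $\mathcal{NF}$ is true, but the proposition is stated for arbitrary finitely presented semifiltrations, so this does not close the gap.

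The paper sidesteps the jump-counting entirely. It works with the canonical minimal generating set $Z_{\mathcal F}$ of $\Ch{i}\mathcal F$ (the set of (simplex, minimal birth index) pairs) and observes two natural, dimension-preserving injections: $j\colon Z_{\mathcal A}\hookrightarrow Z_{\mathcal A'}$, and $Z_{\mathcal A'}\setminus\im(j)\hookrightarrow Z_{\mathcal A'_{1,-}}$, so that at most one generator of $\mathcal A'$ per simplex lies outside $\im(j)$. Together these yield
\[\sum_i\beta_0(\Ch{i}\mathcal A')\;\le\;\sum_i\beta_0(\Ch{i}\mathcal A)+\sum_i\beta_0\bigl(\Ch{i}(\mathcal A'_{1,-})\bigr)\;\le\;|\mathcal A|+|\mathcal A'_{1,-}|,\]
and the result follows from \eqref{eq:size_tower_to_filtration} together with \cref{Prop:Betti_Numbers_Semi_Filtrations} to control $\beta_1(\Ch{0}\mathcal A')$. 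The crucial difference from your scheme is that the excess generators are charged dimension-by-dimension against $|\mathcal A|$, one per element of $Z_{\mathcal A}$, rather than all against $\beta_0(\Ch{0}\mathcal A)$; this makes the multiplicity problem vanish. That pair of injections is the missing ingredient in your argument.
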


\begin{proof}   
For any poset $P$, finitely presented functor $\mathcal F\colon P\to \Simp$, and $i\geq 0$, $\Ch{i} \mathcal F $ has a canonical minimal generating set $Z_{\mathcal F}$, we which may identify with a subset of $\sqcup_{p\in P} \mathcal F_{p}$, namely, 
\[Z_{\mathcal F}= \bigsqcup_{p\in P}\{\sigma \in \mathcal F_{p} \mid \sigma\not\in \im \mathcal F_{p'\subto p} \textup{ for any }p'<p\}.\]

By the definition of $\mathcal A'$, we have natural injections $j\colon Z_{\mathcal A}\hookrightarrow Z_{\mathcal A'}$
and $Z_{\mathcal A'}\setminus \im(j)\hookrightarrow Z_{\mathcal A'_{1,-}}$.  
 It follows that 
\begin{align*}
\sum_{i} \beta_0(\Ch{i}\mathcal A')
&\leq  \sum_{i} \beta_0(\Ch{i}\mathcal A)+\sum_{i} \beta_0(\Ch{i}(\mathcal A'_{1,-}))\\
&\leq |\mathcal A|+|\mathcal A'_{1,-}|\\
&=O(\Delta|\mathcal A| \log |\mathcal A^0|),
\end{align*}
where we have applied the 1-parameter size bound \eqref{eq:size_tower_to_filtration} to $A'_{1,-}$  in the last equality.  The result now follows from \cref{Prop:Betti_Numbers_Semi_Filtrations}.  
\end{proof}

\subsection*{Acknowledgements}
We thank Sariel Har-Peled and Shlomo Reisner for helpful discussions about Dudley's theorem (\cref{thm:polytope-hdorff}).  

\bibliographystyle{plainurl}
\bibliography{bibliography}

\end{document}